\begin{document}

\newenvironment {proof}{{\noindent\bf Proof.}}{\hfill $\Box$ \medskip}

\newtheorem{theorem}{Theorem}[section]
\newtheorem{lemma}[theorem]{Lemma}
\newtheorem{condition}[theorem]{Condition}
\newtheorem{definition}[theorem]{Definition}
\newtheorem{proposition}[theorem]{Proposition}
\newtheorem{remark}[theorem]{Remark} 
\newtheorem{hypothesis}[theorem]{Hypothesis}
\newtheorem{corollary}[theorem]{Corollary}
\newtheorem{example}[theorem]{Example}

\renewcommand {\theequation}{\arabic{section}.\arabic{equation}}
\def \non{{\nonumber}}
\def \tilde{\widetilde}
\def \hat{\widehat}
\def \bar{\overline}
\newcounter{fig}
\newcommand{\ddt}{\frac{d}{dt}}
\newcommand{\lskip}{\phantom{.}}

\newcommand{\f}{\frac}

\newcommand{\Rt}{\longrightarrow}
\newcommand{\rt}{\rightarrow}
\newcommand{\RT}{\Rightarrow}
\newcommand{\Lt}{\longleftarrow}
\newcommand{\lt}{\leftarrow}
\newcommand{\LT}{\Leftarrow}
\newcommand{\LRT}{\Leftrightarrow}

\newcommand{\np}{\noindent}
\newcommand{\T}{\text}
\newcommand{\hs}{\hspace}
\newcommand{\vs}{\vspace}

\newcommand{\w}{\omega}
\newcommand{\W}{\Omega}
\newcommand{\g}{\gamma}
\newcommand{\G}{\Gamma}
\newcommand{\e}{\epsilon}
\newcommand{\oo}{\infty}
\newcommand{\s}{\sigma}
\def\SC{\mathcal}
\def\ub{\underbar}
\def \triple|{|\! | \! |}

\def\le{\left}
\def\ri{\right}

\def\l{\lambda}
\def\L{\Lambda}
\def\ot{\otimes}
\def\ott{\hat{\otimes}_\pi}
\def\oto{\hat{\otimes}_{op}}
\def\oth{\hat{\ot}_{HS}}
\def\<{\langle}
\def\>{\rangle}
\def\~{\tilde}
\newcommand{\EE}{\ensuremath{\mathsf{E}}}
\newcommand{\PP}{\ensuremath{\mathsf{P}}}

\def\y{\mathbf y^*}
\def\N{\mathbb N}
\def\Z{\mathbb Z}
\def\R{\mathbb R}
\def\C{\mathbb C}
\def\H{\mathbb H}
\def\LL{\mathbb L}
\def\K{\mathbb K}
\def\X{\mathbb X}
\def\Y{\mathbb Y}
%%%%%%%%%%%%%%%%%%%%%%%%%

%%%%%
%\singlespacing

\title{\Large\ {\bf Wong-Zakai type convergence in infinite dimensions }}

\author{Arnab Ganguly\\
ETH Zurich\\
gangulya@control.ee.ethz.ch} 
%http://www.math.wisc.edu/$\sim$ganguly/}
%\date{December 19, 2008}
\date{}
\maketitle
\begin{abstract}
\noindent
The paper deals with convergence of solutions of a class of stochastic differential equations driven by infinite-dimensional semimartingales. The infinite-dimensional semimartingales considered in the paper are Hilbert-space valued. The theorems presented generalize the convergence result obtained by Wong and Zakai for stochastic differential equations driven by linear interpolations of a finite-dimensional Brownian motion. In particular, a general form of the correction factor is derived. Examples are given illustrating the use of the theorems to obtain other kinds of approximation results.\\

\noindent
{\bf MSC 2000 subject classifications:}   60H05,  60H10, 60H20, 60F \\

\noindent
{\bf Keywords:}  Weak convergence,  stochastic differential equation, Wong-Zakai, uniform tightness,
infinite-dimensional semimartingales, Banach space-valued semimartingales, $\H^\#$-semimartingales.
\end{abstract}
%\newpage

\setcounter{equation}{0}
\section{Introduction}

The subject of stochastic differential equations (SDEs) in infinite-dimensional spaces has gained substantial popularity since the publication of It{\^o}'s monograph \cite{Ito84} and Walsh's notes on stochastic partial differential equations \cite{Walsh86}. The practical applications of infinite-dimensional stochastic analysis involve investigation of various problems in a variety of disciplines including neurophysiology, chemical reaction systems, infinite particle systems, turbulence etc. 

The stability of stochastic integrals and stochastic differential equations is an important topic in stochastic analysis. More precisely, appropriate conditions on the driving sequence of semimartingales $\{Y_n\}$ are sought, such that $(X_n, Y_n) \RT (X, Y)$ will imply $X_{n-}\cdot Y_n \RT X_-\cdot Y$. Here and throughout the rest of the paper, `$\RT$' will denote convergence in distribution and $X_-\cdot Y \equiv \int X(s-) dY(s)$ is the stochastic integral of $X$ with respect to the integrator $Y$. That it is not true automatically, is shown by Wong and Zakai in \cite{WZ65_1, WZ65_2}. 
Let $W$ be a standard Brownian motion, and $W_n$  a linear interpolation of $W$ defined by
$$\f{d}{dt}W_n(t) = n\le(W(\f{k+1}{n}) -W(\f{k}{n})\ri), \quad \f{k}{n} \leq t < \f{k+1}{n}.$$
Then 
$$\int_0^t W_n(s)  \ dW_n(s) \rt \int_0^t W(s)  \ dW(s) + t/2. $$
Moreover, if $X_n$ satisfies
\begin{align}
\label{WZ0}
dX_n(t) = \sigma(X_n(t))dW_n(t) + b(X_n(t))dt ,
\end{align}
then $\{X_n\}$ does not converge to the solution of the corresponding It{\^o} SDE driven by $W$ but  goes to the solution of
\begin{align}
\label{WZ02}
dX(t)  = \sigma(X(t)) dW(t) + (b(X(t))+\f{1}{2}\sigma(X(t))\sigma'(X(t))) \ dt.
\end{align}

Generalization of the Wong-Zakai result to the multi-dimensional case  has been done by Stroock and Varadhan in \cite{SV72}. Further generalizations included replacement of the Brownian motion with general semimartingales. For continuous semimartingale differentials, Nakao and Yamato \cite{NY78} proved the following result.
\begin{theorem}
\label{WZ03}
Let $U$ be a continuous semimartingale. Suppose $X_n$ satisfies
$$dX_n(t) = \sigma(t,X_n(t),U_n(t)) dU_n(t),$$
where the  $U_n$ are piecewise $C^1$ approximations of $U$. If $U_n$ tends to $U$, then under suitable assumptions $X_n$ goes to $X$, where $X$ satisfies
$$dX(t) = \sigma(t,X(t),U(t))\ dU(t) + \f{1}{2}(\sigma \ \partial_2 \s \ + \partial_3 \s)(t, X(t), U(t)) d[U,U]_t.$$
Here $\partial_i \s$ denotes partial derivative of $\s$ with respect to the $i$-th component. 
\end{theorem}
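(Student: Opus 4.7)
My plan is to pass from the pathwise ODEs satisfied by the $X_n$ to the limiting It\^o SDE by identifying the limit as the solution of the Stratonovich equation $dX = \s(t,X,U)\circ dU$. Since $U_n$ is piecewise $C^1$ and hence locally of bounded variation, the equation for $X_n$ is a pathwise Riemann--Stieltjes ODE whose solution depends continuously on $U_n$ in the supremum topology under the Lipschitz regularity of $\s$ tacitly included in the ``suitable assumptions''. Smooth drivers produce Stratonovich limits, so one expects $X_n \RT X$ with $X$ solving the Stratonovich SDE; the stated correction is then just the Stratonovich-to-It\^o conversion: applying It\^o's formula to $\s(t,X,U)$ gives $d[\s(\cdot,X,U), U] = (\partial_2 \s)\,d[X,U] + (\partial_3\s)\,d[U,U]$, and inserting $d[X,U] = \s\,d[U,U]$ reproduces the factor $\s\,\partial_2\s + \partial_3\s$. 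My goal is to make this heuristic into a direct convergence proof without invoking Stratonovich machinery.

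First I would establish tightness of $\{(X_n, U_n)\}$ in $C([0,\infty);\R^d\times\R)$. Pathwise Gronwall applied to the ODE for $X_n$, together with uniform (in probability) convergence of $U_n$ to $U$ on compacts, bounds $\sup_{s\le T}|X_n(s)|$ and yields a modulus of continuity for $X_n$ inherited from that of $U_n$, hence of $U$. A subsequence then converges in distribution to some $(X, U)$, and since the limiting SDE has unique solutions under the standing regularity hypotheses, it suffices to identify $X$ as that solution on any such subsequence.

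For the identification I would fix a partition $0 = t_0^n < t_1^n < \cdots$ with mesh $\delta_n \to 0$ and Taylor-expand
$$\s(s, X_n(s), U_n(s)) = \s_k^n + \partial_2\s_k^n\,(X_n(s)-X_n(t_k^n)) + \partial_3\s_k^n\,(U_n(s)-U_n(t_k^n)) + R_n(s)$$
on each subinterval $[t_k^n, t_{k+1}^n]$, where $\s_k^n := \s(t_k^n, X_n(t_k^n), U_n(t_k^n))$ and $R_n$ is a quadratic Taylor remainder. Integrating against $dU_n$, the zeroth-order contribution is a Riemann--Stieltjes sum converging to $\int_0^t \s(s,X,U)\,dU$. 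Substituting the ODE $X_n(s) - X_n(t_k^n) \approx \s_k^n(U_n(s)-U_n(t_k^n))$ into the first correction and using the elementary identity $\int_{t_k^n}^{t_{k+1}^n}(U_n(s)-U_n(t_k^n))\,dU_n(s) = \f{1}{2}(U_n(t_{k+1}^n)-U_n(t_k^n))^2$ yields a sum of squared increments of $U_n$ which converges to $[U,U]_t$ in probability, producing the $\f{1}{2}\s\,\partial_2\s\,d[U,U]$ term. The $\partial_3\s$ correction follows from the same explicit integration.

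The hard part is the delicate coupling of the two limits $n\to\infty$ and $\delta_n\to 0$: the discrete sums $\sum_k (U_n(t_{k+1}^n)-U_n(t_k^n))^2$ converge to $[U,U]_t$ only if $\delta_n$ is large compared to the implicit smoothing scale of $U_n$ (otherwise they vanish, as for any smooth curve), yet small enough that the quadratic Taylor remainder $\int R_n\,dU_n$ is negligible after integration. Calibrating $\delta_n$ against $n$, and providing uniform-in-$n$ $L^p$ bounds to control the remainder simultaneously in both limits, is the main technical content; once done, a continuous-mapping argument combined with uniqueness of solutions for the limit SDE identifies $X$ and closes the proof.
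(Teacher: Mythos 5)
Your identification step --- Taylor-expanding $\sigma$ on a partition and extracting $\tfrac12\sum_k(U_n(t_{k+1}^n)-U_n(t_k^n))^2 \to \tfrac12[U,U]_t$ --- is the right mechanism and is essentially the classical Nakao--Yamato computation, but two of your supporting claims are false as stated and one of them is fatal. First, the assertion that the solution of the pathwise Riemann--Stieltjes ODE ``depends continuously on $U_n$ in the supremum topology'' cannot be right: if it were, $U_n\to U$ uniformly would force $X_n$ to converge to the uncorrected equation, contradicting the very theorem you are proving (this non-continuity is the whole Wong--Zakai phenomenon). Continuity holds for each fixed $n$ because $U_n$ has finite variation, but the constants degenerate as $n\to\infty$. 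Second, and for the same reason, the pathwise Gronwall tightness argument fails: the only pathwise bound available is $|X_n(t)-X_n(s)|\le \|\sigma\|_\infty\,(\text{total variation of } U_n \text{ on } [s,t])$, and that total variation blows up (for the linear interpolation of a Brownian path on mesh $1/n$ it is of order $\sqrt{n}$), so no uniform sup-bound and no modulus of continuity for $X_n$ is ``inherited from that of $U_n$.'' Tightness must come from probabilistic (martingale/BDG) estimates on the discrete stochastic integral against the skeleton $\{U(k/n)\}$, not from a pathwise argument. Third, the double-limit calibration of $\delta_n$ against the smoothing scale, which you correctly flag as the crux, is left undone.

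The paper takes a different route that dissolves all three difficulties at once: write $U_n=Y_n+Z_n$ with $Y_n(t)=U(([nt]+1)/n)$ the piecewise-constant skeleton (a genuine semimartingale, shown to be UT via Theorem \ref{FUT1}) and $Z_n=U_n-Y_n\Rightarrow 0$; compute $H_n=\int Z_{n-}\otimes dZ_n\to -\tfrac12[U,U]^{\otimes}$ and $K_n=[Y_n,Z_n]^{\otimes}$ exactly on the interpolation's own grid, so there is no free partition parameter to calibrate; verify $\{H_n\}$ is UT; and then invoke the general limit theorem for SDEs driven by non-UT sequences (Theorem \ref{WZ3}, carried out in the infinite-dimensional setting in Example \ref{NYex} via Theorem \ref{sde_approx_inf}). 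Relative compactness of $\{X_n\}$ and identification of the limit are both delivered by that theorem, which internalizes precisely the moment estimates your pathwise argument is missing. To salvage your direct approach you would need to replace the Gronwall step by such martingale estimates and then carry out the $n$-versus-$\delta_n$ calibration explicitly.
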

Several extensions of the above theorem were made (see Marcus \cite{Marcus80}, Konecny \cite{Konecny83}, Protter \cite{Protter85}), where the requirement of continuous differentials was removed, and the coefficient $\s$ was allowed to be more general.

In the infinite-dimensional case, generalizations are known for approximations of some stochastic evolution equations, where the driving Brownian motion is finite dimensional, but the state-space of the solution of the SDE is infinite dimensional (see \cite{AT84, BCF88, TZ06}). Twardowska \cite{Tw93} considered the case where the driving Brownian motion is Hilbert space-valued.

Conditions like uniform tightness (UT) (Jakubowski, Me{\'m}in and Pag{\`e}s \cite{JMP89}, also see Definition \ref{UT_def}) and uniform controlled variation (Kurtz and Protter \cite{KP91}) were imposed on the driving semimartingale sequence  $\{Y_n\}$ to ensure that $X_{n-}\cdot Y_n \RT X_-\cdot Y$ if $(X_n,Y_n) \RT (X, Y)$. Extensions of the notion of uniform tightness to a sequence of Hilbert space-valued semimartingales and the corresponding weak convergence theorems for stochastic integrals were proved in \cite{Ja96}. For martingale random measures,  conditions for the desired convergence were given by Cho in \cite{Cho95, Cho96}. Kurtz and Protter \cite{KP96_2} extended the notion of uniform tightness further to a sequence of $\H^\#$-semimartingales (semimartingales indexed by Banach space $\H$ satisfying certain properties) and proved limit theorems for both stochastic integrals and stochastic differential equations. These semimartingales form a broad class of infinite-dimensional semimartingales encompassing the class of most (semi)martingale random measures, Banach space-valued semimartingales, etc. Clearly, the approximations of the driving integrators discussed above are not UT.

In the finite-dimensional case, Kurtz and Protter \cite{KP91} studied weak convergence of stochastic differential equations driven by a non-UT sequence of semimartingales. Their theorem, in particular, generalized the result obtained by Wong and Zakai.  A simpler version of their theorem (Theorem 5.10, \cite{KP91}) is stated below.
\begin{theorem}
\label{WZ3}
Let $\{U_n\}$ and $\{V_n\}$ be sequences of $R$-valued semimartingales, $b:\R\Rt \R$ be continuous, $\sigma:\R \Rt \R$ be bounded with bounded first and second order derivatives. Suppose that $X_n$ satisfies
\[X_n(t) = X_n(0) + \int_0^t \sigma(X_n(s-)) dU_n(s) + \int_0^t b(X_n(s-)) dV_n(s).\] 
Write $U_n = Y_n + Z_n$.
Denote
\begin{align*}
H_n(t)& = \int_0^tZ_n(s-) dZ_n(s) \\
K_n(t)& =  [Y_n,Z_n]_t.
\end{align*}
Assume that $ \{Y_n\}, \{H_n\}$ and $\{V_n\}$ are UT, and
$$A_n\equiv (X_n(0),V_n, Y_n,Z_n,H_n,K_n) \RT (X_0,V, Y,0,H,K)\equiv A$$
Then $(A_n,X_n)$ is relatively compact and any limit point $(A,X)$ satisfies
\begin{align*}
X(t) &= X_0 + \int_0^t \sigma(X(s-)) dY(s)  + \int_0^t  \sigma'(X(s-))\sigma(X(s-)) d(H(s) - K(s))\\
& \hspace{.4cm} + \int_0^t b(X(s-)) dV(s).
\end{align*}
\end{theorem}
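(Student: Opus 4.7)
The plan is to split $\int_0^t \sigma(X_n(s-))\,dU_n(s) = \int_0^t \sigma(X_n(s-))\,dY_n(s) + \int_0^t \sigma(X_n(s-))\,dZ_n(s)$ and treat each piece separately. Since $\{Y_n\}$ and $\{V_n\}$ are UT, once relative compactness of $\{X_n\}$ is established the standard Jakubowski--M\'emin--Pag\`es / Kurtz--Protter stability theorem carries $\int \sigma(X_n(s-))\,dY_n(s)$ and $\int b(X_n(s-))\,dV_n(s)$ to $\int \sigma(X(s-))\,dY(s)$ and $\int b(X(s-))\,dV(s)$ along any convergent subsequence. Tightness of $\{X_n\}$ itself will come from a stopping-time truncation that uses the boundedness of $\sigma, \sigma', \sigma''$ together with the UT hypotheses on $Y_n, V_n, H_n$ to control increments on $[0, t\wedge\tau_m]$. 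The real work is the $Z_n$ piece, which produces the Wong--Zakai correction $\int \sigma'(X)\sigma(X)\,d(H-K)$.

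For that piece, integration by parts yields
\[
\int_0^t \sigma(X_n(s-))\,dZ_n(s) = \sigma(X_n(t))Z_n(t) - \sigma(X_n(0))Z_n(0) - \int_0^t Z_n(s-)\,d\sigma(X_n(s)) - [\sigma(X_n), Z_n]_t,
\]
and the boundary terms vanish in distribution because $\sigma$ is bounded and $Z_n \Rightarrow 0$. Expanding $d\sigma(X_n)$ by It\^o and substituting the SDE for $dX_n$, every term in $-\int Z_n(s-)\,d\sigma(X_n(s))$ carries an extra factor $Z_n(s-) \to 0$ against a UT integrator and vanishes, except for
\[
-\int_0^t Z_n(s-)\sigma'(X_n(s-))\sigma(X_n(s-))\,dZ_n(s) = -\int_0^t \sigma'(X_n)\sigma(X_n)\,dH_n \Rightarrow -\int_0^t \sigma'(X)\sigma(X)\,dH.
\]
For the covariation, write $[\sigma(X_n), Z_n]_t = \int_0^t \sigma'(X_n(s-))\,d[X_n, Z_n]_s$ plus jump remainders and expand $d[X_n, Z_n] = \sigma(X_n)\,dK_n + \sigma(X_n)\,d[Z_n] + b(X_n)\,d[V_n, Z_n]$. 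The $[V_n, Z_n]$ contribution vanishes under the hypotheses, while the identity $[Z_n]_t = Z_n(t)^2 - Z_n(0)^2 - 2H_n(t)$ together with $Z_n \Rightarrow 0$ and $H_n \Rightarrow H$ gives $[Z_n] \Rightarrow -2H$. Summing,
\[
\int_0^t \sigma(X_n)\,dZ_n \Rightarrow -\int_0^t \sigma'\sigma\,dH + 2\int_0^t \sigma'\sigma\,dH - \int_0^t \sigma'\sigma\,dK = \int_0^t \sigma'(X)\sigma(X)\,d(H-K).
\]

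With the three pieces in hand, relative compactness of $(A_n, X_n)$ follows by passing to subsequences and invoking Skorohod representation, after which continuity of $\sigma, \sigma', b$ identifies any limit $X$ as a solution of the stated SDE. The main obstacle will be rigorously justifying the informal assertions ``$Z_n \cdot (\text{UT integrator}) \Rightarrow 0$'' and ``$[V_n, Z_n] \Rightarrow 0$'' in the presence of jumps, and accounting for the jump remainders in It\^o's formula so that no extra contributions appear at discontinuities of $Y_n$, $Z_n$, or $V_n$. Once that technical bookkeeping is in place, the decomposition above delivers precisely the correction $\sigma'\sigma\,d(H-K)$.
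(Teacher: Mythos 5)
First, a point of orientation: the paper does not prove Theorem \ref{WZ3} at all --- it is quoted from \cite{KP91} (Theorem 5.10 there). The relevant comparison is therefore with the paper's proofs of its infinite-dimensional analogues, Theorems \ref{sde_approx} and \ref{sde_approx_inf}. Your core computation is exactly the one used there: integration by parts on $\int \sigma(X_{n-})\,dZ_n$, an It\^o expansion of $\sigma(X_n)$ with the SDE substituted back in, identification of $\int Z_{n-}\sigma'(X_{n-})\sigma(X_{n-})\,dZ_n$ with $\int \sigma'\sigma\,dH_n$ (the scalar case of Lemma \ref{ch1}), the covariation decomposed through $K_n$ and $[Z_n,Z_n]$ (the scalar case of Lemma \ref{qd1}), and the identity $[Z_n,Z_n]=Z_n^2-Z_n(0)^2-2H_n\Rightarrow -2H$ producing the bookkeeping $-H+2H-K=H-K$, which matches the paper's $H_n+K_n+[Z_n,Z_n]^{\ot}\Rightarrow K-H^{*}$. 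Your instinct that the jump remainders are the delicate point is also the paper's: there the It\^o remainder $R_n$ is shown to be UT because its total variation is dominated by a combination of $[Y_n,Y_n]_t$ and $[Z_n,Z_n]_t$, after which $\int Z_{n-}\,dR_n$ and $[Z_n,R_n]$ vanish since $Z_n\Rightarrow 0$ against a UT integrator.

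Where you genuinely diverge, and where the gap lies, is the closing step. You propose to first prove tightness of $\{X_n\}$ by a stopping-time truncation and then pass to the limit in each integral separately via Skorohod representation. That ordering is circular in practice: applying Theorem \ref{SI_limit} to $\int\sigma(X_{n-})\,dY_n$ requires \emph{joint} relative compactness of $(\sigma(X_n),Y_n,H_n,K_n,[Z_n,Z_n])$, which is essentially the conclusion you are trying to reach, and the truncation argument you gesture at is precisely the hard content of the SDE-limit theorems of \cite{KP91,KP96_2}. The paper avoids this entirely by a structural move you are missing: after the transformation, the equation reads $X_n=X_n(0)+V_n^{0}+F(X_{n-})\cdot \mathbb{Y}_n$ where $V_n^{0}\Rightarrow 0$ and $\mathbb{Y}_n=(V_n,Y_n,H_n+K_n+[Z_n,Z_n])$ is a single UT driving family, so Theorem \ref{SDE_limit} delivers relative compactness of $(A_n,X_n)$ and identification of the limit in one stroke. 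Two smaller unproven assertions should also be flagged: (i) $[V_n,Z_n]\Rightarrow 0$ does not follow merely from $V_n$ UT and $Z_n\Rightarrow 0$ (UT controls $\int Z_{n-}\,dV_n$ but not $\int V_{n-}\,dZ_n$, and $[V_n,Z_n]$ is their integration-by-parts defect); and (ii) you need $\{[Z_n,Z_n]\}$ and $\{K_n\}$ to be UT before you may pass $\int\sigma'\sigma\,d[Z_n,Z_n]$ and $\int\sigma'\sigma\,dK_n$ to the limit --- the paper obtains this from tightness of $[Z_n,Z_n]_t$ and the total-variation criterion of Theorem \ref{FUT1}, and in your setting it must be extracted from the UT hypotheses on $Y_n$ and $H_n$. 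With the equation recast as a single UT-driven SDE and these two points supplied, your argument becomes the paper's.
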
 

Notice that in the original Wong-Zakai case $U_n(t) = W_n(t), \ V_n(t) =t, \  Y_n(t) = W([nt+1]/n)$ and 
$Z_n(t) = W_n(t) -W([nt+1]/n $.
It could easily be proved that $\{Y_n\}$ and $\{H_n\}$ satisfy the condition of Theorem \ref{WZ3} and $(H(t) - K(t)) = t/2$. Similarly, Theorem \ref{WZ03} can be derived from Theorem \ref{WZ3} by writing $U_n =Y_n +Z_n$ for suitable $Y_n$ and $Z_n$ (see Example \ref{NYex} for a generalization). \\

The objective of the present paper is to study weak convergence of stochastic differential equations driven by infinite-dimensional semimartingales. The results obtained in this paper will be useful to investigate a broader class of approximation results. In particular, such approximation results are helpful in deriving continuous time models as limiting cases of discrete-time ones. We believe that our paper is a  step towards a unified theory of weak convergence of infinite-dimensional stochastic differential equations.

The sequence of stochastic differential equations considered in this paper are driven by  Hilbert space-valued semimartingales. However, the limiting semimartingale need not be Hilbert space-valued. The rest of the paper is structured as follows. In Section \ref{chap:Hsharp}, we discuss briefly infinite-dimensional semimartingales focussing mainly on the concept of $\H^\#$-semimartingales and Banach space-valued semimartingales. In particular, it is shown that stochastic integrals with respect to Banach space-valued semimartingales are special cases of integrals with respect to appropriate $\H^\#$-semimartingales. The main reason for doing this is to pave the way for usage of results from \cite{KP96_2} which are proven in the context of $\H^\#$-semimartingales. 
Section \ref{weakreview} is devoted to the review of the concept of uniform tightness  and weak convergence results that serve as prerequisites for our proof. Section \ref{lemmas} contains technical lemmas that are required later. The main results are presented in Section \ref{WZmain}. Theorem \ref{sde_approx} treats the case when the SDE is driven by infinite-dimensional semimartingales, but the solutions are finite-dimensional, while Theorem \ref{sde_approx_inf} extends the result to the case when the solutions of the SDE are also infinite-dimensional. The section ends with illustrative examples. A few required facts about tensor product are collected in the Appendix.

\section{Infinite-dimensional semimartingales}
\label{chap:Hsharp}
Infinite-dimensional stochastic analysis is an active research area and depending on the need, different types of infinite-dimensional semimartingales are used in modeling. A few popular notions of infinite-dimensional semimartingales include  {\it orthogonal martingale random measure}\cite{GS79}, {\em worthy martingale random measures} \cite{Walsh86}, {\em Banach space-valued semimartingales} \cite{MP80}, {\em nuclear space-valued semimartingales} \cite{Us82}. In  \cite{KP96_2}, Kurtz and Protter introduced the notion of {\it standard}  $\H^\#$-semimartingale. {\it Standard} $\H^\#$-semimartingales form a  very general class of infinite-dimensional semimartingales which includes {\it Banach space valued-semimartingales, cylindrical Brownian motion} and most semimartingale random measures. In particular,  they cover  the two important cases: space-time Gaussian  white noise and Poisson random measures. A few facts about $\H^\#$-semimartingales will be used in the present paper, and below we give a brief outline of $\H^\#$-semimartingales. 

\subsection{$\H^\#$-semimartingale}
Let $\H$ be a separable Banach space.
\begin{definition}
 An $\R$-valued stochastic process $Y$ indexed by $\H\times [0,\infty)$ is an {\bf $\H^\#$-semimartingale} with respect to the filtration $\left\{\SC{F}_t\right\}$ if
\begin{itemize}
 \item for each $h \in \H$, $Y(h,\cdot)$ is a cadlag $\left\{\SC{F}_t\right\}$-semimartingale, with $Y(h,0) = 0$;
\item for each $t>0$,  $h_1,\hdots,h_m \in \H$ and $a_1,\hdots, a_m \in \R$, we have 
$$Y(\sum_{i=1}^m a_ih_i,t) = \sum_{i=1}^m a_i Y( h_i,t) \ \ \T{a.s}.$$
\end{itemize}
\end{definition}

As in almost all integration theory, the first step is to define the stochastic integral in a canonical way for simple functions and then extend it to a broader class of integrands.

Let $Z$ be an $\H$-valued cadlag process of the form 
\begin{equation}
\label{simp}
Z(t) = \sum_{k=1}^m \xi_k(t)h_k, \quad h_1,\hdots,h_k \in \H,
\end{equation}
where the $\xi_k$ are  $\left\{\SC{F}_t\right\}$-adapted real-valued cadlag processes.\\
The stochastic integral $Z_-\cdot Y$ is defined as
$$Z_-\cdot Y(t) = \sum_{k=1}^m \int_0^t \xi_{k}(s-) d Y(h_k,s).$$
Note that the integral above is just a real-valued process. It is necessary to impose more conditions on the $\H^\#$-semimartingale $Y$ to broaden the class of integrands $Z$.

\np
Let $\SC{S}$ be the collection of all processes of the form (\ref{simp}).  Define
\begin{equation}
\label{simp_col}
\mathcal{H}_t =\left\{\sup_{s\leq t}|Z_-\cdot Y(s)|:Z \in \SC{S}, \sup_{s\leq t}\|Z(s)\| \leq 1 \right\}.
\end{equation}

\begin{definition}\label{hstd}
An $\H^\#$-semimartingale $Y$ is {\bf standard} if for each $t>0$, $\SC{H}_t$ is stochastically bounded, that is, for every $t>0$ and $\e>0$, there exists $k(t,\e)$ such that
$$P\left[\sup_{s\leq t}|Z_-\cdot Y(s)| \geq k(t,\e)\right] \leq \e$$
for all $Z \in \SC{S}$ satisfying $\sup_{s\leq t}\|Z(s)\| \leq 1$. 
\end{definition}

The extension of the stochastic integral is then achieved by approximating the integrand $X$ by processes of the form \eqref{simp}. More precisely, 

\begin{theorem}
\label{SIdef}
 Let $Y$ be a standard $\H^\#$-semimartingale, and  $X$ an $\H$-valued adapted and cadlag process. Then for every $\epsilon>0$, there exists a process $X^\epsilon$ such that $\|X(t)-X^\epsilon(t)\|<\e$, and moreover
$$X_-\cdot Y \equiv \lim_{\e\rt0}X^\e_-\cdot Y$$
exists in the sense that for each $\eta >0, t>0$,
$$\lim_{\e\rt0}P\left[\sup_{s\leq t}|X^\e_-\cdot Y(s) - X_-\cdot Y(s)|>\eta\right] = 0.$$
$X_-\cdot Y$ is a cadlag process and is defined to be the stochastic integral of $X$ with respect to $Y$
\end{theorem}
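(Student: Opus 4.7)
The plan is to perform the classical Daniell-style extension of the integral: approximate $X$ by simple processes in $\SC{S}$, invoke the \emph{standard} property of $Y$ to show the corresponding integrals form a Cauchy net in ucp, and define $X_- \cdot Y$ as the uniform limit.

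First I would construct $X^\epsilon \in \SC{S}$ with $\sup_s \|X(s) - X^\epsilon(s)\| < \epsilon$. Introduce stopping times $\tau_0^\epsilon = 0$ and $\tau_{k+1}^\epsilon = \inf\{t > \tau_k^\epsilon : \|X(t) - X(\tau_k^\epsilon)\| \geq \epsilon/2\}$; since $X$ is cadlag in $\H$, only finitely many such $\epsilon/2$-oscillations occur on any compact, so $\tau_k^\epsilon \uparrow \infty$. The piecewise-constant process $\hat X^\epsilon(t) = \sum_k X(\tau_k^\epsilon)\, \mathbf{1}_{[\tau_k^\epsilon,\, \tau_{k+1}^\epsilon)}(t)$ is cadlag and adapted with $\|X - \hat X^\epsilon\| < \epsilon/2$, but has random range. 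Using separability of $\H$, fix a countable dense set $\{h_j\}$ and a Borel measurable nearest-point selector $\pi_M : \H \to \mathrm{span}\{h_1,\dots,h_M\}$ with $\|\pi_M(v) - v\| \to 0$ for every $v$. Localize with $\sigma_\epsilon = \tau_{K(\epsilon)}^\epsilon \wedge \inf\{t : \|X(t)\| \geq R(\epsilon)\}$ for $K(\epsilon), R(\epsilon)$ chosen so that $P(\sigma_\epsilon < t) < \epsilon$. For $M = M(\epsilon)$ large enough, $X^\epsilon := \pi_M \circ \hat X^\epsilon$ stopped at $\sigma_\epsilon$ lies in $\SC{S}$ and is within $\epsilon$ of $X$ on an event of probability at least $1 - \epsilon$.

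Next, linearity of $\SC{S}$ places $(X^\epsilon - X^{\epsilon'})/(\epsilon + \epsilon')$ in $\SC{S}$ with sup-norm at most $1$ on $[0, t]$; Definition \ref{hstd} then yields
\[
P\Bigl[\sup_{s \leq t} \bigl|X^\epsilon_- \cdot Y(s) - X^{\epsilon'}_- \cdot Y(s)\bigr| \geq (\epsilon + \epsilon')\, k(t, \eta)\Bigr] \leq \eta,
\]
up to vanishing localization errors. Thus $\{X^\epsilon_- \cdot Y\}$ is Cauchy in ucp; extracting $\epsilon_n \to 0$ along which the convergence is almost surely uniform on compacts produces $X_- \cdot Y := \lim_n X^{\epsilon_n}_- \cdot Y$, which is adapted, cadlag as a uniform a.s.\ limit of cadlag paths, and independent of the approximating sequence by the same estimate. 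The main obstacle is the spatial finite-dimensionalization: making the projection onto $\mathrm{span}\{h_1,\dots,h_M\}$ both adapted and keeping the output literally inside $\SC{S}$ (whose underlying vectors must be deterministic) requires the Borel selection together with the stopping-time localization, since the norm of $X$ and the number of $\epsilon/2$-jump pieces are only controlled pathwise. Once this pathwise construction is carried through, the Cauchy property, the existence of the ucp limit, and its cadlag regularity all follow mechanically from the \emph{standard} hypothesis.
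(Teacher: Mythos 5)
The paper does not actually prove Theorem \ref{SIdef}; it is imported from \cite{KP96_2} without argument, so there is no internal proof to compare against. Your Daniell-style extension is exactly the argument used in that reference: quantize $X$ into a process in $\SC{S}$, use linearity of $\SC{S}$ together with the stochastic boundedness of Definition \ref{hstd} applied to $(X^\e - X^{\e'})/(\e+\e')$ to obtain the ucp Cauchy property, and pass to the limit; the outline is correct. Two points should be tightened. First, your nearest-point projection $\pi_M$ onto $\mathrm{span}\{h_1,\dots,h_M\}$ converges pointwise but not uniformly on bounded sets in infinite dimensions, so as you acknowledge you only get $\|X(t) - X^\e(t)\|<\e$ off an event of probability $\e$, whereas the statement asserts a sure bound. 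The cleaner route is to partition $\H$ into countably many Borel sets of diameter less than $\e$ centred at a dense sequence and compose $\hat X^\e$ with the resulting quantization map, which gives the sure bound for all $t$; the reduction to finitely many terms (so that the approximant genuinely lies in $\SC{S}$) is then achieved by stopping, exactly as in your localization. Second, the phrase ``up to vanishing localization errors'' hides the one place where care is needed in the Cauchy step: $(X^\e - X^{\e'})/(\e+\e')$ has sup norm at most $1$ only on the good event, so one should stop the coefficient processes at the first time the bound fails (which keeps the process in $\SC{S}$) and estimate the complementary event separately by its probability, which tends to zero. With those two repairs the argument is complete: the fixed constant $k(t,\eta)$ together with the factor $\e+\e'\to 0$ gives convergence in ucp, and the limit is cadlag, adapted, and independent of the approximating family by interleaving.
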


\begin{example}
\label{whitenoise}
{\rm
Let $(U,r)$ be a complete, separable metric space and $\mu$  a sigma finite measure on $(U,\SC{B}(U))$. Denote the Lebesgue measure on $[0,\infty)$ by $\lambda$, and let $W$ be a space-time Gaussian white noise on $U\times[0,\infty)$ based on $\mu\ot\lambda$, that is, $W$ is a Gaussian process indexed by $\SC{B}(U)\times [0,\infty)$
with $E(W(A,t)) = 0$ and $E(W(A,t)W(B,s)) = \mu(A\cap B) \min\left\{t,s\right\}$. For $h\in L^2(\mu)$, define
$W(h,t) = \int_{U\times [0,t)} h(x) W(dx, ds).$
The above integration is defined (see \cite{Walsh86}), and it follows that $W$ is an $\H^\#$-semimartingale with $\H=L^2(\mu)$. It is also easy to check that $W$ is standard in the sense of Definition \ref{hstd}.

}
\end{example}

\begin{example}
\label{poirndmeas} 
{\rm
Let $U,r, \mu$ and $\lambda$ be as before. Let $\xi$ be a Poisson random measure on $U\times [0,\infty)$ with mean measure $\mu\ot \lambda$, that is, 
for each $\Gamma \in \SC{B}(U)\ot \SC{B}([0,\infty))$, $\xi(\Gamma)$ is a Poisson random variable with mean $\mu\ot \lambda(\Gamma)$, and for disjoint $\Gamma_1$ and $\Gamma_2$,
$\xi(\Gamma_1)$ and $\xi(\Gamma_1)$ are independent. For $A \in \SC{B}(U)$, define
$\tilde{\xi}(A,t) = \xi(A\times \left[0,t\right]) - t\mu(A)$.
For $h \in L^2(\mu)$, let $\tilde{\xi}(h,t) = \int_{U\times [0,t)} h(x) \tilde{\xi}(dx, ds)$ and for $h \in L^1(\mu)$, let $\xi(h,t) = \int_{U\times [0,t)} h(x)\xi(dx, ds)$.
Then $\tilde{\xi}$ is a standard $\H^\#$-martingale with $\H=L^2(\mu)$ and $\xi$ is a standard $\H^\#$-semimartingale with $\H =L^1(\mu)$.

}
\end{example} 

\begin{remark}
In fact, it can be shown that most worthy martingale random measures or more generally semimartingale random measures are standard $\H^\#$-semimartingales for appropriate choices of indexing space $\H$ (see \cite{KP96_2}).
\end{remark}

\subsection {$(\LL, \hat{\H})^\#$-semimartingale and infinite-dimensional stochastic integrals}
\label{LH_sharp}
In the previous part, observe that the stochastic integrals with respect to infinite-dimensional standard $\H^\#$-semimartingales are real-valued.  Function valued stochastic integrals are of interest in many areas of infinite-dimensional stochastic analysis, for example, stochastic partial differential equations. With that in mind, we want to study stochastic integrals taking values in some infinite-dimensional space. If $Y$ is a standard $\H^\#$-semimartingale, we could put 
$H(x,t) = X(\cdot -, x)\cdot Y(t)$
where for each $x$ in a Polish space $E$, $X(\cdot,x)$ is a cadlag process with values in $\H$. The above integral is defined, but the function properties of $H$ are not immediately clear. Hence, a careful approach is needed for constructing infinite-dimensional stochastic integrals. In \cite{KP96_2},  Kurtz and Protter introduced the concept of $(\LL, \hat{\H})^\#$-semimartingale as a natural analogue of  the  $\H^\#$-semimartingale for  developing infinite-dimensional stochastic integrals. Below, we give a brief outline of that theory.\\

Let $(E,r_E)$ and $(U,r_U)$ be two complete, separable metric spaces. Let  $\LL, \H $ be separable Banach spaces of $\R$-valued functions on $E$ and $U$ respectively. Note that for function spaces, the product $fg, f \in \LL, g \in \H$ has the natural interpretation of point-wise product. Suppose that  $\left\{f_i\right\}$ and $\left\{g_j\right\}$  are such that the finite linear combinations of the $f_i$  and the finite linear combinations of the $g_j$ are dense in $\LL$ and $\H$ respectively. 
\begin{definition}
\label{Hhat}
Let $\hat{\H}$ be the completion of the linear space $\left\{\sum_{i=1}^l\sum_{j=1}^ma_{ij}f_ig_j: f_i \in \left\{f_i\right\}, g_j \in \left\{g_j\right\}\right\}$ with respect to some norm $\|\cdot\|_{\hat{\H}}$.
\end{definition}

For example, if
$$\|\sum_{i=1}^l\sum_{j=1}^ma_{ij}f_ig_j\|_{\hat{\H}} = \sup\left\{\sum_{i=1}^l\sum_{j=1}^ma_{ij}\<\l,f_i\>\<\eta,g_j\>: \l \in \LL^*, \eta \in \H^*, \|\l\|_{\LL^* }\leq 1, \|\eta\|_{\H^*} \leq 1\right\}$$
then $\hat{\H}$ can be interpreted as a subspace of  the space of bounded operators, $L(\K^*,\LL)$.\\

Let  $\SC{S}_{\hat{\H}}$  denote the space of all processes $X \in D_{\hat{\H}}[0,\infty)$ of the form
\begin{align}\label{simLH}
X(t)= \sum_{ij}\xi_{ij}(t)f_ig_j,
\end{align}
where the $\xi_{ij}$ are $\R$-valued, cadlag, adapted processes and only fintely many $\xi_{ij}$ are non zero.
For $X \in \SC{S}_{\hat{\H}}$, define
$$X_-\cdot Y(t) = \sum_i f_i \sum_j \int_0^t \xi_{ij}(s-)\ dY(g_j,s).$$
Notice that $X_-\cdot Y \in D_{\LL}[0,\infty).$

\begin{definition}\label{std2}
An $\H^\#$-semimartingale is a {\bf standard $(\LL, \hat{\H})^\#$-semimartingale} if
$$\SC{H}_t \equiv \left\{\sup_{s\leq t}\|X_-\cdot Y(s)\|_\LL: X \in \SC{S}_{\hat{\H}}, \sup_{s\leq t}\|X(s)\|_{\hat{\H}} \leq 1\right\}$$
is stochastically bounded for each $t > 0$.
\end{definition}

As in Theorem \ref{SIdef}, under the standardness assumption, the definition of $X_-\cdot Y$ can be extended to all cadlag $\hat{\H}$-valued processes $X$, by approximating $X$ by a sequence of processes of the form \eqref{simLH}. 

\begin{remark} The standardness condition in Definition \ref{std2} will follow if there exists a constant $C(t)$ such that
$$E\left[\|X_-\cdot Y(t)\|_\LL\right] \leq C(t), \quad t>0$$
for all $X \in \SC{S}_{\hat{\H}}$ satisfying $\sup_{s\leq t}\|X(s)\|_{\hat{\H}} \leq 1$.
\end{remark}

\begin{remark}
If $\H$ and $\LL$ are general Banach spaces (rather than Banach spaces of functions), then $\hat{\H}$ could be taken as the completion of $\LL\ot\H$ with respect to some norm, for example the Hilbert-Schmidt norm or the projective norm (see \cite{Rr02}).
\end{remark}

\subsection{Banach space-valued semimartingales }
Standard references for the materials in this section are \cite{MP80, Mm82}.
We start with the definition of martingales taking values in a Banach space $\H$. The definition is analogous to that of real-valued martingales.
\begin{definition}
Let $(\Omega, \SC{F}, \{\SC{F}_t\}, P)$ be a complete probability space. A stochastic process  $M$ taking values in $\H$ is  an {\bf $\{\SC{F}_t\}$-martingale} if
\begin{itemize}
 \item $M$ is  $\{\SC{F}_t\}$-adapted;
\item $E\|M_t\|_\H < \infty$, for all $t>0$;
\item  for every $F \in \SC{F}_s$,\ $\int_F M_t \ dP = \int_F M_s \ dP,$ where $t>s>0$.
\end{itemize}
The integration above is in the Bochner sense.
 \end{definition}

\begin{remark}
For every $h \in \H$ and $h^* \in \H^* $, let $\<h,h^*\>_{\H,\H^*}$ be defined by
\begin{align}
 \label{opaction}
\<h,h^*\>_{\H,\H^*} = h^*(h) = \<h^*,h\>_{\H^*,\H}.
\end{align}
Note that if $M$ is an $\H$-valued martingale, then $\<M(t),h^*\>_{\H,\H^*}$ is a real-valued martingale for every $h^* \in \H^*$. Conversely, 
if $\H$ is separable and $E\|M_t\|_{\H} < \infty$ for every $t>0$, then  $M$ is an $\H$-valued martingale if for every $h^*\in \H^*$, $\<M(t),h^*\>_{\H,\H^*}$ is a real-valued martingale. This is because for separable Banach spaces, the notion of Bochner integral coincides with that of Petis integral.
\end{remark}

Just like the real-valued case, the notion of martingales can be generalized to that of local martingales. Below we define Banach space-valued  semimartingales
\begin{definition}
Let $(\Omega, \SC{F}, \{\SC{F}_t\}, P)$ be a complete probability space. A stochastic process  $Y$ taking values in $\H$ is  an {\bf $\{\SC{F}_t\}$-semimartingale} if $Y$ could be decomposed into
$$Y = M + V,$$
where $M$ is a local martingale, and $V$ is a finite variation process on every bounded interval $[0,t] \subset [0,\infty).$
\end{definition}
\begin{remark}
The local martingale $M$ in the above decomposition can be taken as locally square integrable (see \citep[Theorem 23.6]{Mm82} and  \citep[Section 9.16]{MP80} ). In fact, M{\'e}tivier defined semimartingale when the local martingale part is locally square integrable.
\end{remark}

\subsection{Integration with Banach space-valued semimartingales}
Let $X$ be an  $\{\SC{F}_t\}$-adapted, cadlag process taking values in $\H^*$. Suppose that $Y$ is  an $\H$-valued $\{\SC{F}_t\}$-adapted semimartingale. Let $\sigma =\{t_i\}$ be a partition of $[0,\infty)$. Define
\begin{align}
 \label{xsigma}
X^\sigma(s) = \sum_{i}X(t_i)1_{[t_i,t_{i+1})}(s)
\end{align}
and the stochastic integral $X^\sigma_- \cdot Y(t)$ as
$$X^\sigma_ -\cdot Y(t) = \sum_{i}\<X(t_i), Y(t_{i+1}\vee t)  -Y(t_i\vee t)\>_{\H^*,\H},$$
Notice that $X^\sigma_ -\cdot Y$ is a real-valued process.
The following theorem proves the existence of the stochastic integral 
\begin{theorem}
\label{SIdef1}
There exists an $\{\SC{F}_t\}$-adapted, real-valued cadlag process $X_-\cdot Y$ such that for all $T>0$,
$$\sup_{t\leq T}|X^\sigma_-\cdot Y(t) - X_-\cdot Y(t)| \stackrel{P}\Rt 0, \ \mbox{ as } \ \|\sigma\| \rt 0.$$
\end{theorem}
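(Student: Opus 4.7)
The plan is to embed this Banach-valued integration into the $\H^\#$-semimartingale framework established earlier and then invoke Theorem \ref{SIdef}. Specifically, define the real-valued process indexed by $\H^*$ by
$$\tilde{Y}(h^*, t) := \langle h^*, Y(t)\rangle_{\H^*, \H}.$$
Linearity of $\tilde{Y}$ in its first argument is immediate from the bilinearity of the duality pairing. For each fixed $h^* \in \H^*$, decomposing $Y = M + V$ with $M$ a locally square integrable local martingale and $V$ an adapted finite variation process, one gets $\tilde{Y}(h^*, \cdot) = \langle h^*, M(\cdot)\rangle + \langle h^*, V(\cdot)\rangle$, which is a real-valued semimartingale since $\langle h^*, M\rangle$ is a real local martingale (using the Bochner/Pettis equivalence for separable Banach spaces noted in the remark) and $\langle h^*, V\rangle$ is cadlag with finite variation. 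Hence $\tilde{Y}$ is an $(\H^*)^\#$-semimartingale.

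The central task is then to verify that $\tilde{Y}$ is \emph{standard} in the sense of Definition \ref{hstd}, i.e., that
$$\SC{H}_t = \left\{\sup_{s \leq t} |Z_- \cdot \tilde{Y}(s)| : Z \in \SC{S},\ \sup_{s \leq t} \|Z(s)\|_{\H^*} \leq 1 \right\}$$
is stochastically bounded for each $t$. By a standard localization (stopping at the first time $\T{Var}(V)|_0^\tau$ or $\sup_{s \leq \tau} \|M(s)\|_{\H}$ exceeds $n$, and letting $n \to \infty$), one can reduce to the case where $V$ has bounded total variation on $[0,t]$ and $M$ is a bounded $\H$-valued $L^2$ martingale. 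For $Z(s) = \sum_k \xi_k(s) h_k^*$ with $\|Z\|_{\H^*} \leq 1$, the finite variation part satisfies the pathwise estimate $|Z_- \cdot \tilde{V}(t)| \leq \T{Var}(V)|_0^t$. For the martingale part, $Z_- \cdot \tilde{M}$ is a real-valued local martingale whose bracket is controlled by the operator-valued (or tensor) quadratic variation of $M$ from \cite{MP80, Mm82}; combining this with Doob's inequality yields a uniform second moment bound on $\sup_{s \leq t} |Z_- \cdot \tilde{M}(s)|$ depending only on $E\|M(t)\|_\H^2$ and $\sup_s \|Z(s)\|_{\H^*}^2 \leq 1$. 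This gives stochastic boundedness of $\SC{H}_t$.

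With standardness in hand, Theorem \ref{SIdef} applied to the cadlag adapted $\H^*$-valued process $X$ and the standard $(\H^*)^\#$-semimartingale $\tilde{Y}$ delivers a real-valued cadlag process $X_- \cdot Y$, defined as the ucp limit of $X^\e_- \cdot \tilde{Y}$ for any simple $\H^*$-valued approximants $X^\e$ with $\sup_s \|X(s) - X^\e(s)\|_{\H^*} < \e$. The Riemann-type approximants $X^\sigma$ defined in \eqref{xsigma} are simple processes; their integrals $X^\sigma_- \cdot \tilde{Y}$ coincide by construction with the sum $\sum_i \langle X(t_i), Y(t_{i+1}\vee t) - Y(t_i\vee t)\rangle$ from the theorem statement, and since $X$ is cadlag one can fit the $X^\sigma$ into the approximation scheme along partitions of mesh tending to zero. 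The stated ucp convergence then follows from the convergence statement in Theorem \ref{SIdef}.

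The main obstacle is the standardness verification --- specifically, the uniform second-moment control of $Z_- \cdot \tilde{M}$ over all simple $Z \in \SC{S}$ with $\|Z\|_{\H^*} \leq 1$. Because there is no direct It\^o isometry in a general Banach space, one must rely on the operator- or tensor-valued quadratic variation construction developed by M\'etivier and Pellaumail. The finite variation piece and the passage from simple to cadlag integrands via Theorem \ref{SIdef} are comparatively routine once this foundational bound is available.
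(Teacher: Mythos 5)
The paper itself offers no proof of Theorem \ref{SIdef1}: the result is quoted from M\'etivier--Pellaumail \cite{MP80, Mm82}, and the paper's own logic in the following subsubsection runs in the opposite direction --- it takes the integral and the bound (\ref{SI_bound1}) as given, uses them to conclude that a Banach space-valued semimartingale is a standard $\H^\#$-semimartingale, and then merely remarks that the two constructions of the integral coincide. Your reconstruction through the $\H^\#$-framework is therefore a genuinely different (and reasonable) route. The standardness verification --- localization, the pathwise bound for the finite variation part, and the control of the martingale bracket via the M\'etivier--Pellaumail machinery --- is sound in outline, granting the same foundational inequality that the paper itself imports without proof.

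The genuine gap is in the final step. Theorem \ref{SIdef} defines $X_-\cdot Y$ as the limit of $X^\e_-\cdot Y$ along approximants satisfying $\sup_t\|X(t)-X^\e(t)\|<\e$, and the standardness bound controls the difference of two integrals only through the \emph{uniform} distance between the integrands. But the Riemann approximants $X^\sigma$ of a cadlag $X$ do \emph{not} converge uniformly to $X(\cdot-)$ when $X$ has jumps: if $X$ jumps at $\tau\in(t_i,t_{i+1})$, then for $s\in(\tau,t_{i+1}]$ the quantity $\|X^\sigma(s)-X(s-)\|_{\H^*}$ is of the order of $\|\Delta X(\tau)\|$ no matter how fine the partition. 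So one cannot simply ``fit the $X^\sigma$ into the approximation scheme'' of Theorem \ref{SIdef}; concluding $X^\sigma_-\cdot Y\to X_-\cdot Y$ requires an additional ingredient --- either a dominated convergence theorem for the $\H^\#$-integral (since only pointwise convergence $X^\sigma(s)\to X(s-)$ together with a uniform bound is available), or an explicit decomposition of $X$ into a part with finitely many large jumps, which sufficiently fine partitions isolate and which can be handled by hand, plus a remainder whose jumps are uniformly small, for which the cadlag modulus of continuity does yield uniform closeness of $X^\sigma$ to $X_-$. A second, smaller imprecision: $X^\sigma$ is not literally of the form (\ref{simp}), since its values $X(t_i)$ are random elements of $\H^*$ rather than deterministic directions multiplied by adapted scalar processes; identifying $X^\sigma_-\cdot \tilde{Y}$ with the displayed Riemann sum needs a short extra approximation of each $X(t_i)$ by finitely-valued $\SC{F}_{t_i}$-measurable random elements.
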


The following lemma (see \citep[Section 10.9]{MP80}) gives a  bound for the stochastic integral.
\begin{lemma}
Let $Y$ be an $\{\SC{F}_t\}$-adapted semimartingale taking values in a Banach space $\H$ and $X$ an  $\{\SC{F}_t\}$-adapted, cadlag process taking values in $\H^*$. Then, there exists a nondecreasing, $\{\SC{F}_t\}$-adapted, real-valued cadlag process $Q$
such that
\begin{align}
\label{SI_bound1}
E[\sup_{t\leq T}|X_-\cdot Y(s)|^2] \leq E[\int_0^T \|X_{s-}\|^2_\H dQ_s]
\end{align}
Integration in the right side is in the Riemann-Stieltjes sense.
\end{lemma}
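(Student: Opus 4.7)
The plan is to prove the estimate first for the step-process approximants $X^\sigma$ defined in \eqref{xsigma} and then transfer the bound to general cadlag $\H^*$-valued $X$ via the approximation in Theorem \ref{SIdef1} combined with Fatou's lemma. The dominating process $Q$ will be built from the semimartingale decomposition $Y = M + V$, where $M$ is a locally square integrable $\H$-valued martingale and $V$ has locally finite variation, with a localization argument so that one may first assume $M$ is a true square integrable martingale and $V$ has integrable total variation on $[0,T]$.

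For the finite variation part, the pathwise bound
$$|X^\sigma_- \cdot V(t)| \leq \int_0^t \|X(s-)\|_{\H^*}\, d|V|(s),$$
where $|V|$ is the scalar total variation process of $V$, combined with Cauchy--Schwarz, yields a contribution controlled by $|V|(T)\int_0^T \|X(s-)\|^2_{\H^*}\, d|V|(s)$, which already has the right form to be absorbed into $dQ$. For the martingale part, $N^\sigma := X^\sigma_- \cdot M$ is a real-valued local martingale, so Doob's $L^2$-maximal inequality gives
$$\EE\bigl[\sup_{t\leq T}|N^\sigma(t)|^2\bigr] \leq 4\,\EE\bigl[\langle N^\sigma\rangle_T\bigr],$$
and the remaining task is to dominate $\langle N^\sigma\rangle_T$ by $\EE\bigl[\int_0^T \|X(s-)\|^2_{\H^*}\, dA^M(s)\bigr]$ for some predictable increasing process $A^M$ that does not depend on $X^\sigma$.

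The genuinely delicate step, and the main obstacle, is the construction of $A^M$: in a general Banach space one cannot talk about a scalar quadratic variation of $M$ directly. Following M\'etivier--Pellaumail, I would work with the operator-valued angle bracket (the Dol\'eans measure on $\H^*\otimes\H^*$) associated with the locally square integrable martingale $M$, and extract from it, via the trace or projective norm on $\H^*\hat\otimes\H^*$ discussed in the appendix, a scalar predictable increasing process $A^M$ satisfying
$$\langle\langle h^*,M\rangle_{\H^*,\H}\rangle_t \;\leq\; \int_0^t \|h^*\|^2_{\H^*}\, dA^M(s)$$
for every deterministic $h^*\in\H^*$. Bilinearity and the step-function form of $X^\sigma$ then lift this to
$$\langle N^\sigma\rangle_t \;\leq\; \int_0^t \|X^\sigma(s-)\|^2_{\H^*}\, dA^M(s).$$
Setting $Q := 8 A^M + 2|V|(T)\cdot|V|$ (or any comparable combination) and summing the martingale and finite variation contributions yields the desired inequality on simple integrands.

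Finally, to pass to general cadlag $\H^*$-valued $X$, choose partitions $\sigma_n$ with $\|\sigma_n\|\to 0$; Theorem \ref{SIdef1} gives $\sup_{t\leq T}|X^{\sigma_n}_-\cdot Y(t) - X_-\cdot Y(t)|\to 0$ in probability, so Fatou's lemma on the left and dominated convergence for the Riemann--Stieltjes integrals on the right (using right-continuity of $X$ and the finiteness of $Q(T)$ by localization) conclude the proof. The only real difficulty in the whole argument sits in the M\'etivier--Pellaumail construction of $A^M$; everything else is Doob plus Cauchy--Schwarz plus an approximation limit.
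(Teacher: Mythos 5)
Your route is exactly the one the paper itself relies on: the paper gives no argument for this lemma at all, it simply points to \citep[Section 10.9]{MP80}, and what you have written is a reconstruction of that M\'etivier--Pellaumail argument (decompose $Y=M+V$, localize, Doob plus an angle-bracket/Dol\'eans-measure domination for the martingale part, Cauchy--Schwarz for the finite-variation part, then transfer from $X^\sigma$ to $X$ by Theorem \ref{SIdef1} and Fatou). So in spirit you are doing the "same proof as the paper". Two remarks: for Hilbert-valued $M$ the construction of $A^M$ is not really delicate --- the density of the tensor bracket with respect to the scalar bracket $\langle M\rangle$ is a nonnegative trace-one form, hence of operator norm at most one, so $A^M=\langle M\rangle$ already works; the genuinely Banach-valued case needs geometric hypotheses on $\H$ (2-smoothness) that the paper glosses over, and in any case the paper only ever uses the Hilbert setting.

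There is, however, one concrete gap relative to the statement as written: the lemma requires $Q$ to be $\{\SC{F}_t\}$-adapted, and your $Q=8A^M+2|V|(T)\cdot|V|$ is not, because the multiplier $|V|(T)$ is $\SC{F}_T$-measurable, so $Q_t$ anticipates. This is not merely cosmetic: the Cauchy--Schwarz step naturally produces the M\'etivier--Pellaumail \emph{control-process} inequality
\begin{align*}
\EE\Bigl[\sup_{t\leq T}|X_-\cdot Y(t)|^2\Bigr]\leq \EE\Bigl[A_T\int_0^T\|X(s-)\|^2\,dA_s\Bigr],
\end{align*}
with the extra factor $A_T$ outside the integral, and you cannot simply absorb $|V|(T)$ into $dQ$ by replacing $|V|$ with $|V|^2$, since $d(|V|^2)_s=(|V|_s+|V|_{s-})\,d|V|_s$ bounds the wrong way. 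To close the gap you must either (i) localize so that $|V|_T\leq C$ on each stochastic interval and let the constant $C$ be swallowed by $Q$, which is what the paper implicitly does since it only ever needs the localized bound to deduce stochastic boundedness, or (ii) choose a smarter adapted increasing functional of $|V|$; for instance $Q^V_t=e^{|V|_t}-1$ satisfies $\bigl(\int_0^T f_s\,d|V|_s\bigr)^2\leq \int_0^T f_s^2\,dQ^V_s$ for all nonnegative $f$ (check it via $d|V|/dQ^V=e^{-|V|}$ and Cauchy--Schwarz in $L^2(dQ^V)$). With either repair the rest of your argument, including the final Fatou/approximation step, goes through.
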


\subsubsection{Banach space-valued semimartingale as standard $\H^\#$-semimartingale}
Let $Y$ be a semimartingale taking values in a Banach space $\K$. We will show that $Y$ can be considered as an $\H^\#$-semimartingale, with $\H = \K^*$.
Since $\K$ is isometrically embedded in $\K^{**}$, consider $Y$ as an element of $\K^{**}$. Then notice that 
\begin{itemize}
\item for each $h \in \K^* $, $Y(h,\cdot) \equiv \<Y(t),h\>_{\K,\K^*}$ is a real-valued semimartingale;
\item for $h_1, h_2 \in \K^*$, $Y(h_1+h_2,\cdot) = Y(h_1,\cdot)+Y(h_2,\cdot)$.
\end{itemize}
This proves that $Y$ is an $\H^\#$-semimartingale with $\H = \K^*$, and now (\ref{SI_bound1}) proves that $Y$ is standard. It is obvious that the two definitions of stochastic integral (see Theorem \ref{SIdef} and Theorem \ref{SIdef1}) coincide.

\begin{remark}
If \ $\K = \LL^*$, for some Banach space $\LL$, then $Y$ can be considered as an $\LL^\#$-semimartingale.
\end{remark}

\subsubsection{Hilbert space-valued stochastic integrals}
As before, let $Y$ be a semimartingale taking values in a Banach space $\K$. 
Let $\LL$ be a separable Hilbert space. Let $X$ be an  $\{\SC{F}_t\}$-adapted, cadlag process taking values in the operator space, $L(\K,\LL)$. Let $\sigma =\{t_i\}$ be a partition of $[0,\infty)$. Define
\begin{align}
 \label{xsigma2}
X^\sigma(s) = \sum_{i}X(t_i)1_{[t_i,t_{i+1})}(s)
\end{align}
and the stochastic integral $X^\sigma_- \cdot Y(t)$ as
$$X^\sigma_ -\cdot Y(t) = \sum_{i}X(t_i)(Y(t_{i+1}\wedge t)  -Y(t_i\wedge t)).$$
Notice that $X^\sigma_ -\cdot Y$ is an $\LL$-valued process.
The following theorem proves the existence of the stochastic integral. 
\begin{theorem}
\label{SIdef2}
There exists an $\{\SC{F}_t\}$-adapted, $\LL$-valued cadlag process $X_-\cdot Y$, such that for all $T>0$,
$$\sup_{t\leq T}\|X^\sigma_-\cdot Y(t) - X_-\cdot Y(t)\|_\LL \stackrel{P}\Rt 0.$$
\end{theorem}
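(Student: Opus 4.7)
The plan is to mimic the construction of Theorem \ref{SIdef1} coordinate by coordinate, using the completeness of the Hilbert space $\LL$ together with a duality reduction to the scalar case. Pick an orthonormal basis $\{e_n\}$ of $\LL$; for each $n$, the process $X_n(s) := \<X(s)\,\cdot\,,e_n\>_\LL = X(s)^*e_n$ takes values in $\K^*$ and is $\{\SC{F}_t\}$-adapted and cadlag. Because taking inner product with $e_n$ commutes with the finite Riemann sum defining $X^\sigma_-\cdot Y$,
\[
\<X^\sigma_-\cdot Y(t), e_n\>_\LL = (X_n)^\sigma_-\cdot Y(t),
\]
and Theorem \ref{SIdef1} (applied with $\H=\K$) supplies a real-valued cadlag limit $Z_n(t) := (X_n)_-\cdot Y(t)$ satisfying $\sup_{t\le T}|\<X^\sigma_-\cdot Y(t),e_n\>_\LL - Z_n(t)| \stackrel{P}{\RT} 0$ as $\|\sigma\|\to 0$.

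To assemble the $Z_n$ into an $\LL$-valued process, apply the bound (\ref{SI_bound1}) to the increments $(X_n^\sigma - X_n^{\sigma'})_-\cdot Y$ -- note that the controlling process $Q$ depends only on $Y$, so the same $Q$ works for every $n$ -- and sum over $n$:
\[
\sum_n E\le[\sup_{t\le T}|(X_n^\sigma - X_n^{\sigma'})_-\cdot Y(t)|^2\ri] \le E\le[\int_0^T \|X^\sigma(s-)-X^{\sigma'}(s-)\|_{HS}^2\, dQ_s\ri],
\]
where $\|\cdot\|_{HS}$ denotes the Hilbert-Schmidt norm on $L(\K,\LL)$. After a standard localization, the right-hand side tends to zero as $\|\sigma\|,\|\sigma'\|\to 0$, so the series $X_-\cdot Y(t):=\sum_n Z_n(t)\,e_n$ converges in $\LL$ and defines a cadlag adapted $\LL$-valued process. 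Chebyshev combined with the above then yields $\sup_{t\le T}\|X^\sigma_-\cdot Y(t) - X_-\cdot Y(t)\|_\LL \stackrel{P}{\RT} 0$, and a routine calculation shows the limit is basis-independent.

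The main obstacle is that summing the scalar bound over an orthonormal basis naturally produces the Hilbert-Schmidt norm of $X$ rather than its operator norm, so for genuinely $L(\K,\LL)$-valued (non--Hilbert-Schmidt) integrands the localization above is not automatic. The cleanest way around this is to follow M{\'e}tivier-Pellaumail \cite{MP80} and construct the $\LL$-valued integral directly via the operator-valued quadratic variation of $Y$; this gives the analogue of (\ref{SI_bound1}) with the operator norm on the right-hand side, and the Cauchy argument sketched above then carries through verbatim, producing the required $\LL$-valued cadlag limit.
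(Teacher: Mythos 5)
The paper does not actually prove Theorem \ref{SIdef2}: it states the result and points to M\'etivier--Pellaumail \cite{MP80} (Sections 10.9 and 6.7), so the only real comparison to make is between your self-contained portion and that citation. Your coordinatewise reduction is sound as far as it goes: writing $X_n(s)=X(s)^*e_n\in\K^*$, commuting the inner product with the Riemann sums, and invoking Theorem \ref{SIdef1} per coordinate is correct, and your Cauchy estimate obtained by summing \eqref{SI_bound1} over the basis (with the single controlling process $Q$) is valid. You also correctly diagnose the one genuine obstruction: $\sum_n\|(X^\sigma-X^{\sigma'})^*e_n\|_{\K^*}^2$ is the Hilbert--Schmidt norm squared, so the argument only closes when the integrand is $HS(\K,\LL)$-valued, not for a general $L(\K,\LL)$-valued $X$ as the theorem asserts. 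The missing ingredient for the general case is precisely the operator-norm domination \eqref{SI_bound2}, and that inequality is the content of the M\'etivier--Pellaumail control-process construction --- it cannot be recovered by summing the scalar bound over a basis. Since your resolution of this is to defer to \cite{MP80}, you end up resting on exactly the same external result the paper does; the net effect is that your writeup is an honest partial proof (complete for Hilbert--Schmidt integrands, plus a correct identification of what the cited machinery must supply) rather than an independent proof of the full statement. If you want a self-contained argument, the route is to first establish \eqref{SI_bound2} itself, after which your Cauchy-in-$\sigma$ and Chebyshev steps go through verbatim with $\|\cdot\|_{op}$ in place of $\|\cdot\|_{HS}$.
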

Similar to (\ref{SI_bound1}), we have:
\begin{lemma}
Let $Y$ be an $\{\SC{F}_t\}$-adapted semimartingale taking values in a Banach space $\K$. Then, there exists a nondecreasing, $\{\SC{F}_t\}$-adapted, real-valued cadlag process $Q$, such that for any Hilbert space $\LL$
\begin{align}
\label{SI_bound2}
E[\sup_{t\leq T}\|X_-\cdot Y(s)\|_{\LL}^2] \leq E[\int_0^T \|X_{s-}\|^2_{op}dQ_s],
\end{align}
whenever $X$ is an  $\{\SC{F}_t\}$-adapted, cadlag $L(\K,\LL)$-valued process. Here $\|\cdot\|_{op}$ denotes the operator norm.
\end{lemma}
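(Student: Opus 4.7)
The plan is to construct $Q$ purely from the Banach-valued semimartingale structure of $Y$, so that the same $Q$ works uniformly across all Hilbert spaces $\LL$ and all $L(\K,\LL)$-valued integrands $X$. First I would decompose $Y=M+V$ with $M$ a locally square-integrable $\K$-valued martingale and $V$ a $\K$-valued process of finite variation, as guaranteed by the remark following the definition of Banach space-valued semimartingales. The process $Q$ will then be assembled from two pieces, one intrinsic to $M$ and one intrinsic to $V$.

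For the finite-variation part, I would work first with simple integrands $X^\sigma$ of the form (\ref{xsigma2}) and use the pathwise estimate
$$\|X^\sigma_-\cdot V(t)\|_\LL \le \sum_i \|X(t_i)\|_{op}\,\|V(t_{i+1}\wedge t)-V(t_i\wedge t)\|_\K \le \int_0^t \|X^\sigma(s-)\|_{op}\, d|V|_\K(s),$$
so by Cauchy--Schwarz against $d|V|_\K$ this contribution is controlled by $Q^V_t := |V|_\K(t)\cdot\bigl(|V|_\K(t)\vee 1\bigr)$. For the martingale part I would apply Doob's inequality in the Hilbert space $\LL$ to the $\LL$-valued martingale $X^\sigma_-\cdot M$, reducing the supremum to its terminal second moment, and then appeal to the operator-valued quadratic variation of $M$ (which exists since $M$ is locally square integrable and $\K$-valued, cf.\ \cite{MP80,Mm82}). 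This furnishes an increasing real-valued process $A_M$, depending only on $M$, such that $E\|X^\sigma_-\cdot M(T)\|_\LL^2 \le E[\int_0^T \|X^\sigma(s-)\|_{op}^2\, dA_M(s)]$.

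Combining the two estimates via $(a+b)^2\le 2a^2+2b^2$, I would take $Q$ to be a constant multiple of $A_M + Q^V$. Since both $A_M$ and $Q^V$ depend only on $Y$, the resulting $Q$ is intrinsic and uniform in $\LL$ and $X$. The passage from simple $X^\sigma$ to a general cadlag $L(\K,\LL)$-valued $X$ is then standard: by Theorem \ref{SIdef2}, $X^\sigma_-\cdot Y$ converges to $X_-\cdot Y$ uniformly in probability along a refining sequence of partitions, so Fatou's lemma transports the bound to the limit.

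The principal obstacle is the martingale step. A naive attempt via an orthonormal basis $\{e_n\}$ of $\LL$ gives, after applying the scalar bound (\ref{SI_bound1}) to each coordinate process $(X^*_{\cdot-}e_n)_-\cdot Y$ and summing, a right-hand side involving $\sum_n \|X^*_{s-}e_n\|_{\K^*}^2$, a Hilbert--Schmidt-type quantity that can far exceed $\|X_{s-}\|_{op}^2$. The operator-norm bound therefore cannot come from a coordinate decomposition; it relies crucially on the fact that $M$ is valued in a genuine Banach space $\K$ rather than being merely cylindrical, which endows its operator-valued quadratic variation with a trace-class-like structure that absorbs the $n$-summation into the single scalar process $A_M$. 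I would invoke the construction of Métivier and Pellaumail (Section 10.9 of \cite{MP80}) to supply $A_M$ with exactly this property.
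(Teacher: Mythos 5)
The paper does not actually prove this lemma: it states it and points to Sections 6.7 and 10.9 of M\'etivier--Pellaumail \cite{MP80}. Your sketch is the natural reconstruction of what that citation supplies -- decompose $Y=M+V$, treat the two parts separately, and combine -- and you correctly identify the crux: the martingale estimate in the \emph{operator} norm cannot be obtained by summing the scalar bound \eqref{SI_bound1} over an orthonormal basis of $\LL$ (that yields a Hilbert--Schmidt-type quantity), and must instead come from the control-process construction of \cite{MP80}. Since you end up deferring exactly the same ingredient to exactly the same source as the paper, your argument is on the same footing as the paper's, and the overall architecture is sound.

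There is, however, a concrete error in the finite-variation step. Writing $F_t=T_t(V)$ for the total variation, Cauchy--Schwarz gives $\bigl(\int_0^T \|X(s-)\|_{op}\,dF_s\bigr)^2 \le F_T\int_0^T\|X(s-)\|_{op}^2\,dF_s$, and the outer factor $F_T$ depends on $T$; it cannot in general be absorbed into a single $T$-independent increasing process $Q^V$. Your specific choice $Q^V_t=F_t(F_t\vee 1)$ fails: take $F_s=s$ on $[0,2]$ and $\|X\|_{op}=10$ on $[0,1]$, $=1$ on $(1,2]$; then the left side is $(10+1)^2=121$ while $\int_0^2\|X\|_{op}^2\,dQ^V=100+3=103$. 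The reason a large integrand placed early defeats any such $Q^V$ is that $dQ^V$ would have to dominate $F_T\,dF$ near $0$ for \emph{every} $T$. This is precisely why M\'etivier and Pellaumail state their inequality with the factor $A_{\tau-}$ left \emph{outside} the integral (equivalently, one localizes so that $F$ is bounded and takes $dQ^V=\|F\|_\infty\,dF$). The paper's own statement of the lemma inherits this simplification from its informal restatement of \cite{MP80}, so the gap is not fatal to the approach, but as written your packaging of the finite-variation contribution is false and needs either the localization or the M\'etivier--Pellaumail form of the control inequality.
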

See  \citep[Section 10.9, Section 6.7]{MP80})

\begin{remark}
The above lemma might not be true if $\LL$ is an arbitrary Banach space.
\end{remark}

\begin{remark}
\label{lhsharp}
If $Y$ is a $\K$-valued semimartingale, then (\ref{SI_bound2}) shows that for any Hilbert space $\LL$, $Y$ can be considered as a standard $(\LL,\hat{\H})^\#$-semimartingale. Here  $\hat{\H}$ is the completion of the space $\LL\ot\K^*$ with respect to some norm which makes $\LL\ot\K^* \subset L(\K,\LL)$.
\end{remark}

Suppose that $X$ and $ Y$ are two cadlag semimartingales taking values in $\K,\K^*$. Then both $X_-\cdot Y$ and $Y_-\cdot X$ are defined. We define the (scalar) covariation process $[X,Y]$ as 
\begin{align}
\label{intpart1}
[X,Y]_t = \<X(t),Y(t)\>_{\K,\K^*}- \<X(0),Y(0)\>_{\K,\K^*}- X_-\cdot Y(t) -Y_-\cdot X(t). 
\end{align}
It is easy to see that 
$$[X,Y]_t = \lim_{\|\sigma\| \rt 0} \sum_i\<X(t_{i+1}) - X(t_i),Y(t_{i+1}) - Y(t_i)\>_{\K,\K^*} $$
where $\sigma = \{t_i\}$ is a partition of $[0,t]$, and $\|\sigma\| = \sup(t_{i+1}-t_i)$ is the mesh of the partition $\sigma$.

\subsection{Tensor stochastic integration}
\label{tenSI}
We briefly outline the  theory of tensor stochastic integration. It will be used in the next chapter. The reader might want to look at Section \ref{tenprod} before reading this part. We assume that $Y$ is an adapted $\K$-valued semimartingale, where $\K$ is a separable Hilbert space with inner product denoted by $\<\cdot,\cdot\>_\K$.
Let $X$ be a cadlag and adapted $\K$-valued process.
The tensor stochastic integral $\int X_-\ot dY$ is defined as
$$\int_0^t X(s-)\ot dY(s) = \lim_{\|\sigma\| \rt 0}\sum_{i} X(t_i)\ot (Y(t_{i+1})-Y(t_i)),$$
where $\sigma = \{t_i\}$ is a partition of $[0,t]$, and $\|\sigma\| = \sup(t_{i+1}-t_i)$ is the mesh of the partition $\sigma$.

\begin{theorem}
$\lim_{\|\sigma\| \rt 0}\sum_i X(t_i)\ot (Y(t_{i+1})-Y(t_i))$ exists.
\end{theorem}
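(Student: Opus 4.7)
The plan is to recast the tensor stochastic integral as an instance of the Hilbert space-valued operator stochastic integral already constructed in Theorem \ref{SIdef2}, and then read off the convergence of the Riemann sums from the convergence guaranteed by that theorem.

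First I would equip $\K\ot\K$ with the Hilbert--Schmidt norm and pass to the completion $\K\oth\K$, which is again a separable Hilbert space and on elementary tensors satisfies $\|x\ot y\|_{HS}=\|x\|_\K\|y\|_\K$ (a fact collected in the appendix). Next, I would introduce the linear map $\phi:\K\rt L(\K,\K\oth\K)$ defined on elementary inputs by
\[
\phi(x)(y) \;=\; x\ot y,
\]
and observe that $\|\phi(x)\|_{op}=\|x\|_\K$, so $\phi$ is a linear isometry. Lifting pointwise, the process $\tilde X(t):=\phi(X(t))$ is a cadlag, $\{\SC F_t\}$-adapted process with values in $L(\K,\K\oth\K)$, and $\|\tilde X(t)\|_{op}=\|X(t)\|_\K$.

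Now take $\LL=\K\oth\K$, which is a separable Hilbert space, and invoke Remark \ref{lhsharp} so that the $\K$-valued semimartingale $Y$ is viewed as a standard $(\LL,\hat\H)^\#$-semimartingale. Theorem \ref{SIdef2} then delivers a cadlag $\LL$-valued process $\tilde X_-\cdot Y$ such that, with $\tilde X^\sigma$ the piecewise-constant approximation built from the partition $\sigma=\{t_i\}$ as in \eqref{xsigma2},
\[
\sup_{t\leq T}\|\tilde X^\sigma_-\cdot Y(t)-\tilde X_-\cdot Y(t)\|_{HS}\stackrel{P}\Rt 0
\quad\text{as }\|\sigma\|\rt 0.
\]
By the very definition of $\phi$, the Riemann sums coincide:
\[
\tilde X^\sigma_-\cdot Y(t) \;=\; \sum_i \phi(X(t_i))\bigl(Y(t_{i+1}\wedge t)-Y(t_i\wedge t)\bigr)
\;=\; \sum_i X(t_i)\ot \bigl(Y(t_{i+1}\wedge t)-Y(t_i\wedge t)\bigr),
\]
so the limit $\int_0^t X(s-)\ot dY(s):=\tilde X_-\cdot Y(t)$ exists in $\K\oth\K$ in the stated Riemann-sum sense, with convergence uniformly on compacts in probability.

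The bulk of the argument is essentially bookkeeping; the one place that takes a moment of care is checking that the Hilbert--Schmidt completion makes $\phi$ an isometric embedding into $L(\K,\K\oth\K)$, since without this identification the bound \eqref{SI_bound2} (which requires $\LL$ to be Hilbert) would not apply. With that established, cadlag/adaptedness of $\tilde X$ is inherited from $X$ by continuity of $\phi$, and the rest is a direct appeal to Theorem \ref{SIdef2}.
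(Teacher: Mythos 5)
Your proof is correct and follows essentially the same route as the paper: both identify $X(t)$ with the operator $g\mapsto X(t)\ot g$ (your $\phi(X(t))$ is exactly the paper's $\hat{X}(t)$), note that this is an isometric embedding of $\K$ into $L(\K,\K\oth\K)$, and then invoke the existing Hilbert-space-valued integration theory (Theorem \ref{SIdef2} via Remark \ref{lhsharp}) to get convergence of the Riemann sums. The only cosmetic difference is that the paper writes the embedding out in an orthonormal basis and phrases the conclusion in the $(\LL,\hat{\H})^\#$-semimartingale language, while you appeal to Theorem \ref{SIdef2} directly.
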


\begin{proof}
Below, we give a quick proof which illustrates the fact that the tensor integration is an example of stochastic integration with respect to a standard 
$(\LL,\hat{\H})^\#$-semimartingale, for appropriate $\LL$ and $\hat{\H}$.
Take $\LL = \K\hat{\ot}_{HS}\K$, the completion of the space $\K\ot\K$ with respect to the Hilbert-Schmidt norm (see \ref{HSnorm}). Recall that $\K\hat{\ot}_{HS}\K$ is a Hilbert space and can be identified with the space of Hilbert-Schmidt operators $HS(\K,\K)$.
Let $\SC{H} = \LL\ot\K$, that is, $\SC{H}$ is  the space of all elements of the form:
$$\sum_{i,j=1}^{I,J}c_{ij} \l_i \ot k_j, \ \ \l_i \in \LL, k_j \in \K, c_{ij} \in \R.$$
Consider $\SC{H}$ as a subspace of $L(\K, \LL)$, by defining the action of an element in $\SC{H}$ on $\K$ as 
$$\sum_{i,j=1}^{I,J}c_{ij} \l_i \ot k_j(k) = \sum_{i,j=1}^{I,J}c_{ij}\< k_j,k\>_\K \l_i, \ \ k \in \K. $$
Let $\hat{\H}$ be the completion of the space $\SC{H}$ with respect to the operator norm. Suppose that $\{e_i\}$ forms an orthonormal basis of $\K$.
For $h \in \K$, define
$$\hat{h} = \sum_{i,j}\<h,e_i\>_\K(e_i\ot e_j)\ot e_j$$
so that for any $g\in \K$,
$$\hat{h}(g) = \sum_{i,j}\<h,e_i\>_\K\<g,e_j\>_\K e_i\ot e_j .$$
Observe that 
$$\hat{h}(g) = h\ot g.$$
It is now trivial to check that $\hat{h} \in \hat{\H}$, and  $h\rt \hat{h}$ is an isometric isomorphism from $\K$ into $\hat{\H}$. Consequently, $h$ can be identified with $\hat{h}$ and thought of as an element of $\hat{\H}$. 
Therefore,
\begin{align*}
 \sum_{i} X(t_i)\ot (Y(t_{i+1})-Y(t_i)) = \int_0^tX^{\sigma}(s-) \ot dY(s) = \int_0^t \hat{X}^{\sigma}(s-)dY(s). 
\end{align*}
The last quantity has a limit as $\|\sigma\| \rt 0$, because $Y$ is a standard $(\LL,\hat{\H})^\#$-semimartingale, for any Hilbet space $\LL$ (see Remark \ref{lhsharp}).
\end{proof}

Note that by the construction, $\int X_-\ot dY \in  \K\oth\K = HS(\K,\K)$. Since the tensor product is not usually symmetric, $\int X_-\ot dY \neq \int dY\ot X_- $. But  as Lemma \ref{obs1} shows, we have the following relation
$$(\int X_-\ot dY)^* = \int dY\ot X_-,$$
where $*$ denotes the operator adjoint.

\begin{lemma}
\label{obs1}
Let $X$ be an adapted, cadlag $\K$-valued process and $Y$ an adapted $\K$-valued semimartingale.
\begin{align*}
\<\int_0^tX(s-)\ot dY(s)\phi_k,\psi_k\>_\K &= \<\int_0^tX(s-)\ot dY(s),\phi_k\ot\psi_k\>_{ \K\hat{\ot}_{HS}\K} \\
& = \int_0^t \<X(s-),\phi_k\>_\K \ d\<Y(s),\psi_k\>_\K \\
\<\int_0^tdY(s)\ot X(s-)\psi_k,\phi_k\>_\K & =\<\int_0^tdY(s)\ot X(s-), \psi_k\ot \phi_k\>_{\K\hat{\ot}_{HS}\K}\\
& = \int_0^t \<X(s-),\phi_k\>_\K \ d\<Y(s),\psi_k\>_\K.
\end{align*}

\end{lemma}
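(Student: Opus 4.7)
The plan is to recover both equalities from the Riemann-sum definition of $\int_0^t X(s-)\ot dY(s)$ by passing the finite sums through suitable continuous linear functionals on $HS(\K,\K)\cong\K\oth\K$. Under the identification used in the construction above, a rank-one tensor $a\ot b$ acts as the bounded operator $\phi\mapsto\<a,\phi\>_\K\,b$, so on simple tensors one has
\[
\<(a\ot b)\phi,\psi\>_\K \;=\; \<a,\phi\>_\K\<b,\psi\>_\K \;=\; \<a\ot b,\phi\ot\psi\>_{HS}.
\]
Both sides are bilinear and continuous in $a\ot b$ (in the HS-norm), so by density the identity $\<T\phi,\psi\>_\K = \<T,\phi\ot\psi\>_{HS}$ extends to every $T\in HS(\K,\K)$. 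Applied to $T=\int_0^t X(s-)\ot dY(s)$, this gives the first equality in the lemma.

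For the second equality I would begin from the Riemann-sum convergence
\[
\int_0^t X(s-)\ot dY(s) \;=\; \lim_{\|\sigma\|\rt 0}\sum_i X(t_i)\ot\bigl(Y(t_{i+1})-Y(t_i)\bigr),
\]
which from the preceding theorem holds in probability in the HS-norm, since this is exactly the $(\LL,\hat{\H})^\#$-stochastic integral with $\LL=\K\oth\K$. Applying the continuous functional $T\mapsto\<T,\phi_k\ot\psi_k\>_{HS}$ and using bilinearity converts the tensor Riemann sum into the real-valued sum
\[
\sum_i \<X(t_i),\phi_k\>_\K\bigl(\<Y(t_{i+1}),\psi_k\>_\K - \<Y(t_i),\psi_k\>_\K\bigr).
\]
Since $s\mapsto\<X(s-),\phi_k\>_\K$ is cadlag and adapted and $s\mapsto\<Y(s),\psi_k\>_\K$ is a real-valued semimartingale, this sum converges in probability to $\int_0^t\<X(s-),\phi_k\>_\K\,d\<Y(s),\psi_k\>_\K$ as $\|\sigma\|\rt 0$ by the classical Riemann--Stieltjes approximation for scalar stochastic integrals. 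Matching the two limits yields the second equality.

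The second pair of identities (for $\int_0^t dY(s)\ot X(s-)$) follows by the identical argument applied to the Riemann sums $\sum_i\bigl(Y(t_{i+1})-Y(t_i)\bigr)\ot X(t_i)$; the only change is that the middle pairing uses $\psi_k\ot\phi_k$ rather than $\phi_k\ot\psi_k$, while the scalar integral on the right is unchanged because scalar products of real numbers commute. The main technical point is the compatibility of the two modes of convergence—HS-norm convergence in probability for the tensor Riemann sums, and in-probability convergence of the scalar Riemann sums to the scalar stochastic integral. I would handle this by passing to a common subsequence of mesh sizes along which both convergences hold almost surely, and then simply letting the exact algebraic identity between prelimit sums pass to the limit.
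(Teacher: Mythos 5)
Your proof is correct and follows essentially the same route as the paper: both pair the tensor Riemann (partition) approximation of $\int X_-\ot dY$ with $\phi_k\ot\psi_k$, use bilinearity to reduce to scalar Riemann sums for $\int \<X_-,\phi_k\>\,d\<Y,\psi_k\>$, and pass to the limit via continuity of the inner product. Your extra care about reconciling the two modes of convergence (common subsequence with a.s.\ convergence) is a reasonable tightening of the same argument, not a different approach.
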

\begin{proof} Let $\s$ denote the partition $\{t_i\}$ of $[0,t]$, and denote $X^\sigma$ by (\ref{xsigma})
Notice that
\begin{align*}
\<\int_0^tX^\s_s\ot dY_s , \phi_k\ot\psi_k\>_{ \K\hat{\ot}_{HS}\K} & = \sum_i \<(X(t_i))\ot(Y(t_{i+1}) - Y(t_i)),\phi_k\ot\psi_k\>_{ \K\hat{\ot}_{HS}\K} \\
& = \sum_i\<X(t_i),\phi_k\>_\K \ \<Y(t_{i+1}) - Y(t_i),\psi_k\>_\K\\
& = \int_0^t \<X^\s_s,\phi_k\>_\K \ d\<Y_s,\psi_k\>_\K.
\end{align*}
The theorem follows by taking limit as $\|\sigma\| \rt 0$, and using the continuity of the inner product.
The second part is similar.
\end{proof}

Define $Z =\int X_-\ot dY$. Since $Z \in \K\oth\K =HS(\K,\K) $, by Remark \ref{lhsharp},  $Z$ is a standard $(\LL, \hat{\H})^\#$-semimartingale, where  $\LL$ is any  Hilbert space and $\hat{\H}$ is the completion of the space $\LL\ot (\K\oth\K)$ with respect to some norm such that $\hat{\H} \subset L(\K\oth\K, \LL)$.
Hence, if $J$ is an $\hat{\H}$-valued cadlag and adpated  process, the stochastic integral $J_-\cdot Z$ is defined.

Recall that for any two Hilbert spaces $\X, \Y$, $\X\oth\Y=HS(\Y,\X)\subset L(\Y,\X)$ (see \ref{HSnorm}). In particular,  for $u=\sum_{i=1}^mx_i\ot y_i$ and $y\in \Y$, $u(y) = \sum_ix_i \<y,y_i\>$.
Note that  $\|u\|_{op} \leq \|u\|_{HS}$, where $\|\cdot\|_{op}$ denotes the operator norm. The following chain rule holds.

\begin{theorem}
\label{ch1}
Suppose $J$ is an $(\K\oth\K) $-valued cadlag and adapted  process and $Z_t = \int_0^tX(s-)\ot dY(s)$. Then
\[\int_0^tJ(s-) \ dZ(s) = \int_0^tJ(s-)(X(s-)) \ dY_s.\]
\end{theorem}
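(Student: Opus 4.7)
My approach is to verify the identity first for piecewise-constant rank-one step processes $J$, then extend by linearity and a density argument to all cadlag adapted $(\K\oth\K)$-valued $J$. The extension relies on two bounds: $Z=\int X_-\ot dY$ is a standard $(\LL,\hat{\H})^\#$-semimartingale with $\LL=\R$ and $\hat{\H}=\K\oth\K$ (Remark \ref{lhsharp}), so that HS-norm convergence of integrands in probability transfers to convergence of $J_-\cdot Z$; and the Banach-space bound (\ref{SI_bound1}) controls the $\R$-valued integral $\int J(X(s-))\,dY$ in terms of $\int\|J(X(s-))\|_\K^2\,dQ_s$.

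\textbf{Key steps.} Fix an orthonormal basis $\{e_k\}$ of $\K$; then $\{e_i\ot e_j\}$ is an orthonormal basis of $\K\oth\K=HS(\K,\K)$. For a cadlag adapted $(\K\oth\K)$-valued $J$, write $J(t)=\sum_{i,j}\xi_{ij}(t)\,e_i\ot e_j$ with real-valued cadlag adapted coefficients $\xi_{ij}(t)=\<J(t),e_i\ot e_j\>_{HS}$, truncate to $i,j\leq N$, and approximate each $\xi_{ij}$ by a piecewise-constant adapted process on a partition $\sigma=\{s_m\}$. By bilinearity of both sides of the claimed identity in $J$, the problem reduces to the base case $J(s)\equiv a\ot b$ constant on an interval $[u,v]$. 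In that case the left side becomes $\<a\ot b,Z(v)-Z(u)\>_{HS}$, which by Lemma \ref{obs1} equals $\int_u^v\<X(s-),a\>_\K\,d\<Y(s),b\>_\K$; the right side $\int_u^v(a\ot b)(X(s-))\,dY(s)$ unpacks, via the action convention of Section \ref{tenSI} and the real-valued Banach integral of Theorem \ref{SIdef1} (using $\K\cong\K^*$), to the same scalar expression. Linearity then gives the identity for all simple $J$.

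\textbf{Passage to the limit and main obstacle.} To go from simple to general, I refine $\sigma$ and let $N\rt\oo$ simultaneously, producing approximants $J^n$ with $\sup_{s\leq v}\|J^n(s)-J(s)\|_{HS}\rt 0$ in probability (possible because $J$ is cadlag). HS-norm convergence transfers to $J^n_-\cdot Z\rt J_-\cdot Z$ in probability by standardness of $Z$; for the right side, the inequality $\|J(x)\|_\K\leq\|J\|_{op}\|x\|_\K\leq\|J\|_{HS}\|x\|_\K$ promotes HS convergence of $J^n$ to $\K$-norm convergence of $J^n(X(s-))$, uniformly on $[u,v]$ in probability, using that $\sup_{s\leq v}\|X(s)\|_\K<\oo$ a.s.\ by cadlag-ness of $X$. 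The bound (\ref{SI_bound1}) then yields $\int J^n(X)\,dY\rt\int J(X)\,dY$ in probability. The main obstacle is orchestrating these two convergences along a single approximating sequence; but the $(J^n)$ just described works on both sides simultaneously, yielding the claimed identity in the limit.
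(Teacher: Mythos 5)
Your proof is correct and follows essentially the same route as the paper's: verify the identity on simple integrands via Lemma \ref{obs1}, then extend by HS-norm density using the standardness of $Z$ (and the bound \eqref{SI_bound1} on the right-hand side). The only differences are cosmetic --- the paper's base case already allows general cadlag adapted coefficients $\xi_k$ in $J=\sum_k\xi_k\,\phi_k\ot\psi_k$, so it avoids your extra time-discretization step, while you make the limit passage on both sides more explicit than the paper does.
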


\begin{proof}
First, take $J$  of the form
\begin{align}
\label{simple}
J(s) = \sum_{k=1}^n \xi_k(s)\phi_k\ot\psi_k.
 \end{align}

Then note that
\begin{align*}
\int_0^tJ(s-) \ dZ(s)& = \sum_{k=1}^n \int_0^t \xi_k(s-) d\<Z(s),\phi_k\ot\psi_k\>\\
& = \sum_{k=1}^n \int_0^t \xi_k(s-) \<X(s-),\phi_k\> \ d\<Y(s),\psi_k\> \ \ \ (\mbox{by Lemma \ref{obs1}})\\
& =   \int_0^t \sum_{k=1}^n \xi_k(s-) \<X(s-),\phi_k\> \psi_k \ dY(s)\\
& = \int_0^t J(s-)(X(s-)) \ dY(s).
\end{align*}
The third equality follows from the definition of the stochastic integral with  respect to a standard $\K^\#$-semimartingale, and the last one by identifying 
$\K\oth\K$ with $L(\K,\K)$.
Now, for any $\K\oth\K$-valued adapted process $J$, there is a sequence of $\K\oth\K$ valued adapted processes  $J_n$ of the form (\ref{simple}) such that  $\sup_{s\leq t}\|J_n(s) -J(s)\|_{HS} \rt 0$, which in turn implies $\sup_{s\leq t}\|J_n(s) -J(s)\|_{op} \rt 0$.
Letting $n\rt \infty$ in
$$\int_0^tJ_n(s-) \ dZ(s) =  \int_0^t J_n(s-)(X(s-)) \ dY(s),$$
we are done.
\end{proof}

Similar to (\ref{intpart1}), we define the tensor covariation as 
\begin{align}
 \label{intpart2}
[X,Y]^\ot_t = X(t)\ot Y(t) - X(0)\ot Y(0) -\int_0^t X(s-)\ot dY(s) -\int_0^t dX(s-) \ot Y(s)
\end{align}
It is easy to see that 
 $$[X,Y]^\ot_t = \lim_{\|\sigma\| \rt 0} \sum_i (X(t_{i+1}) - X(t_i))\ot(Y(t_{i+1}) - Y(t_i)) $$
where $\sigma = \{t_i\}$ is a partition of $[0,t]$, and $\|\sigma\| = \sup(t_{i+1}-t_i)$ is the mesh of the partition $\sigma$.

Let $B$ be a Banach space. For $\phi \in D_{B}[0,\infty)$, define the total variation of $\phi$ in the interval $[0,t]$ as 
\begin{align}
 \label{FVdef}
T_t (\phi) = \sup_{\sigma} \sum_i \|\phi(t_i) -\phi(t_{i-1})\|_{B},
\end{align}
where as before, $\sigma=\{t_i\}$ is a partition of the interval $[0,t]$. We say $\phi$ is of locally finite variation (or sometimes simply finite variation) if $T_t(\phi) < \infty $, for all $t>0$. 

\begin{remark}\label{tensorcov}
For any $\K$-valued semimartingale $Y$, $[Y,Y]^\ot$ is an $\K\oth\K = HS(\K,\K)$-valued process. In fact, it can be shown that almost all paths of $[Y,Y]^\ot$ take values in the space of nuclear operators $\SC{N}(\K,\K)$ and $trace([Y,Y]^\ot_t) =[Y,Y]_t$. Moreover, the total variation of paths of $[Y,Y]^\ot$ in the nuclear norm (hence also in the Hilbert-Schmidt norm) satisfies $T_t([Y,Y]^\ot) \leq [Y,Y]_t$. (See \citep[Theorem 26.11]{Mm82})
\end{remark}

%%%%%%%%%%%%%%%%%%%%%%%%%%%%%%%%%%%%%%%%%%%%%%%
\section{Uniform tightness and weak convergence results}\label{weakreview}

Since the state space of the $\H^\#$-semimartingales is not known, weak convergence of a sequence of $\H^\#$-semimartingales is defined in the following way.
\begin{definition}
\label{weakcon_def}
Let $\LL$ and $\H$ be two separable Banach spaces. Let $\{Y_n\}$ be a sequence of $\{\SC{F}^n_t\}$-adapted $\H^\#$-semimartingales and $\{X_n\}$ be a sequence of cadlag, $\{\SC{F}^n_t\}$ adapted $\LL$ valued processes. $(X_n,Y_n) \RT (X,Y)$ if for every finite collection of elements  $\phi_1,\hdots \phi_d \in \H$, 
$$(X_n,Y_n(\phi_1,\cdot),\hdots,Y_n(\phi_d,\cdot)) \RT (X,Y(\phi_1,\cdot),\hdots,Y(\phi_d,\cdot))$$
in  $\ D_{\LL\times \R^d}[0,\infty)$.
\end{definition}

Let $\LL,\K$ be separable Banach spaces, and define $\hat{\H}$ to be the completion of the space $\LL\ot\K$ with respect to some norm.
Let $\{\SC{F}^n_t\}$ be a sequence of right continuous filtrations. Let $\mathcal{S}^n$ denote the space of all $\hat{\H}$-valued processes $Z$, such that $\|Z(t)\|_{\hat{\H}} \leq 1$ and is of the form
\[Z(t) = \sum_{i,j=1}^{I,J} \xi_{ij}(t) \l_i\ot h_j, \ \  \l_i \in \LL, h_j \in \K\]
where the $\xi_{ij}$ are cadlag and $\{\mathcal{F}^n_t\}$-adapted $\R$-valued processes.
\begin{definition}
\label{UT_def}
A sequence of $\{\SC{F}^n_t\}$ adapted, standard $(\LL, \hat{\H})^\#$-semimartingales  $\{Y_n\}$ is {\it uniformly tight} (UT) if,  for every $\delta > 0$ and $t>0$, there exists a $M(t,\delta)$ such that 
\begin{align}
 \sup_{Z \in \mathcal{S}^n}  P[\sup_{s\leq t}\|Z_-\cdot Y_n(s)\|_{\LL} > M(t,\delta)] \leq \delta.
\end{align}
\end{definition}
\begin{remark}
Uniform tightness of the sequence $\{Y_n\}$ would follow if, for every $t>0$, there exists a constant $C(t)$ (not depending on $n$), such that
$$ \sup_{Z \in \mathcal{S}^n}E[\sup_{s\leq t}\|Z_-\cdot Y_n(s)\|_{\LL}] \leq C(t).$$
\end{remark}

\begin{theorem} (\citep[Theorem 4.2]{KP96_2})
 \label{SI_limit}
For each $n=1,2,\hdots$, let $Y_n$ be an $\{\SC{F}^n_t\}$-adapted,  standard $(\LL, \hat{\H})^\#$-semimartingale.
Assume that the sequence $\{Y_n\}$ is UT.  If $(X_n,Y_n) \RT (X,Y)$, then there is a filtration $\{\SC{F}_t\}$ such that $Y$ is an $\{\SC{F}_t\}$-adapted, standard $(\LL, \hat{\H})^\#$-semimartingale, $X$ is $\{\SC{F}_t\}$-adapted and $(X_n,Y_n, X_{n-}\cdot Y_n) \RT (X,Y, X_-\cdot Y)$.\\
 If $(X_n,Y_n) \stackrel{P}\Rt (X,Y)$ in probability then $(X_n,Y_n, X_{n-}\cdot Y_n) \stackrel{P}\Rt (X,Y, X_-\cdot Y)$.
\end{theorem}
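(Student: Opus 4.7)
The strategy follows the classical Jakubowski–M\'emin–Pag\`es / Kurtz–Protter recipe: approximate the cadlag integrands $X_n$ by simple $\hat{\H}$-valued processes of the form \eqref{simLH}, pass to the limit on the finite linear combinations of real-valued stochastic integrals that result, and invoke the UT hypothesis to show that the approximation error in the stochastic integral is uniformly small in $n$.

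First I would build simple processes $X_n^\delta \in \SC{S}_{\hat{\H}}$ satisfying $\sup_{s\le t}\|X_n(s)-X_n^\delta(s)\|_{\hat{\H}} < \delta$, arranged so that $(X_n^\delta,Y_n)\RT(X^\delta,Y)$ jointly. The natural construction is two-layered: first stop at the successive times at which $X_n$ oscillates by more than $\delta/2$ in the $\hat{\H}$-norm, producing cadlag piecewise-constant interpolants that inherit joint convergence from $X_n\RT X$; then replace each snapshot $X_n(\tau_k^n)$ by an element of the dense linear span $\{\sum a_{ij}f_i g_j\}$ within distance $\delta/2$. A diagonal argument keeps the family of basis elements $\{f_i g_j\}$ used in the approximation finite along a convergent subsequence.

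For fixed $\delta$ the simple-integrand stochastic integral reduces to the finite linear combination
$$X_{n-}^\delta\cdot Y_n(t) = \sum_{i,j} f_i \int_0^t \xi_{ij}^{n,\delta}(s-)\,dY_n(g_j,s),$$
so Definition \ref{weakcon_def} applied to the finite family $\{g_j\}$, combined with the scalar Jakubowski–M\'emin–Pag\`es theorem and the continuity of the map $(a_{ij})\mapsto\sum_{i} f_i a_{ij}$ into $D_\LL[0,\infty)$, yields $(X_n^\delta,Y_n,X_{n-}^\delta\cdot Y_n)\RT(X^\delta,Y,X_-^\delta\cdot Y)$. The UT assumption then closes the $\delta$-gap: after normalization, $\delta^{-1}(X_n - X_n^\delta)\in\SC{S}^n$, so Definition \ref{UT_def} gives
$$P\!\left[\sup_{s\le t}\|X_{n-}\cdot Y_n(s) - X_{n-}^\delta\cdot Y_n(s)\|_\LL > \delta\, M(t,\eta)\right]\le \eta$$
uniformly in $n$. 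The analogous bound for the limit pair $(X,Y)$ comes from standardness of $Y$, which is itself inherited from uniform stochastic boundedness of the $Y_n$: realize any simple $Z$ measurable with respect to the limit filtration as the limit of processes $Z_n\in\SC{S}^n$ and pass the UT estimate through. Letting $\delta\downarrow 0$ along a diagonal subsequence produces $(X_n,Y_n,X_{n-}\cdot Y_n)\RT(X,Y,X_-\cdot Y)$, and the in-probability version follows by the usual device since the approximating differences have deterministic limits.

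The limit filtration $\{\SC{F}_t\}$ is taken as the right-continuous augmentation of $\sigma(X(s),Y(\phi,s):s\le t,\phi\in\hat{\H})$ on the Skorohod-representation space, so $X$ and $Y$ are automatically adapted and $Y$ is a standard $(\LL,\hat{\H})^\#$-semimartingale by the argument just outlined. I expect the main obstacle to be the simple-approximation step: producing $X_n^\delta$ that simultaneously (i) lies in $\SC{S}^n$ with normalized error $\delta^{-1}(X_n-X_n^\delta)\in\SC{S}^n$, (ii) is $\delta$-close to $X_n$ in $\hat{\H}$-norm on $[0,t]$, and (iii) converges jointly with $(X_n,Y_n)$. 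The delicate interplay between the temporal stopping (to obtain the piecewise-constant structure) and the basis truncation inside $\hat{\H}$ (to obtain the tensor-product form $f_i g_j$) is where essentially all of the technical work lives.
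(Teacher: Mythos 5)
The paper does not prove this theorem; it is imported verbatim as \citep[Theorem 4.2]{KP96_2}, so there is no internal proof to compare against. Your outline is a faithful reconstruction of the Kurtz--Protter argument for that cited result: simple-process approximation of the integrand, finite-dimensional weak convergence for simple integrands via the scalar Jakubowski--M\'emin--Pag\`es theorem, and the UT bound to close the $\delta$-gap uniformly in $n$. Two small points of care that your sketch glosses over: the UT estimate in Definition \ref{UT_def} applies only to elements of $\SC{S}^n$, so the bound on $X_{n-}\cdot Y_n - X^\delta_{n-}\cdot Y_n$ must be obtained by comparing two simple approximants (whose difference, suitably normalized, lies in $\SC{S}^n$) and then passing to the limit in the definition of the integral, rather than by normalizing $X_n - X_n^\delta$ directly; and the oscillation stopping times $\tau_k^n$ converge jointly with $(X_n,Y_n)$ only for $\delta$ outside a countable exceptional set determined by the law of the limit $X$, which is the standard device needed to make step (iii) of your construction work.
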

A similar theorem for stochastic differential equations has also been proved.
\begin{theorem} (\citep[Theorem 7.5]{KP96_2})
\label{SDE_limit}
Let $\LL = \R^d$. For each $n=1,2,\hdots$, let $Y_n$ be an $\{\SC{F}^n_t\}$-adapted,  standard $(\LL, \hat{\H})^\#$-semimartingale. Suppose that $(U_n, X_n, Y_n)$ satisfies 
$$X_n = U_n + F_n(X_{n-})\cdot Y_n,$$
where $F_n, F : \R^d \rt \K^d$ are measurable functions satisfying
\begin{itemize}
 \item $F_n \rt F$ uniformly over compact subsets of $\R^d$;
\item $F$ is  continuous;
\item $\sup_n\sup_x \|F_n(x)\|_{\K^d} < \infty$. 
\end{itemize}
If $(U_n,Y_n) \RT (U,Y)$ and $\{Y_n\}$ is UT, then $\{(U_n, X_n, Y_n)\}$ is relatively compact and any limit point $(U, X, Y)$ satisfies 
\begin{align*}
X = U + F(X_{-})\cdot Y.
\end{align*} 
\end{theorem}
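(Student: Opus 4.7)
The plan is to follow the standard Kurtz--Protter strategy: first establish relative compactness of $\{(U_n, X_n, Y_n)\}$, then extract convergent subsequences, and finally identify any limit point as a solution of the limiting SDE. Throughout, the UT hypothesis on $\{Y_n\}$ combined with the uniform bound $\sup_n \sup_x \|F_n(x)\|_{\K^d} \leq M < \infty$ does most of the work, and the continuity/uniform-convergence assumptions on $F_n$ handle the passage to the limit.

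For relative compactness, I would argue as follows. The integrand $F_n(X_{n-})$, viewed naturally as an $\hat{\H}$-valued process via the embedding $\K^d = \R^d \otimes \K \hookrightarrow \hat{\H}$, is uniformly bounded in $\hat{\H}$-norm by $M$. By UT applied with the normalized integrand $F_n(X_{n-})/M \in \mathcal{S}^n$, the family $\{\sup_{s\leq t}\|F_n(X_{n-})\cdot Y_n(s)\|_{\R^d}\}_n$ is stochastically bounded for every $t$, giving the compact-containment condition for $X_n$. For the jump/oscillation control required for tightness in $D_{\R^d}[0,\infty)$, invoke the equivalent characterization of UT via good decompositions: locally $Y_n = M_n + A_n$ with $M_n$ a local martingale having bounded jumps and $A_n$ of finite variation, both controlled uniformly in $n$. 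Standard estimates (BDG for the martingale part, pointwise estimates for the bounded-variation part) give the required control on the modulus for $F_n(X_{n-}) \cdot Y_n$, and tightness of $\{U_n\}$ follows from its convergence. Jointly we obtain relative compactness of $\{(U_n, X_n, Y_n)\}$.

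Passing to a weakly convergent subsequence $(U_n, X_n, Y_n) \RT (U, X, Y)$ in the sense of Definition \ref{weakcon_def}, Theorem \ref{SI_limit} guarantees $Y$ is a standard $(\LL,\hat{\H})^\#$-semimartingale with respect to a filtration making $U$ and $X$ adapted. To pass the SDE to the limit, the crucial step is showing $F_n(X_{n-})\cdot Y_n \RT F(X_-)\cdot Y$. Split the integrand:
\begin{align*}
F_n(X_{n-}) \cdot Y_n \ = \ F(X_{n-}) \cdot Y_n \ + \ \bigl(F_n - F\bigr)(X_{n-}) \cdot Y_n .
\end{align*}
By tightness, sample paths $X_n$ lie in a compact set of $\R^d$ with arbitrarily high probability; hence uniform convergence $F_n \to F$ on compacts implies $\sup_{s\leq t}\|(F_n-F)(X_{n}(s-))\|_{\K^d} \to 0$ in probability. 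A further application of the UT bound, with the integrand normalized by this vanishing quantity, forces the second term to zero in probability. For the first term, continuity of $F$ together with the continuous mapping theorem (applied through the finite-dimensional pairings $Y_n(\phi,\cdot)$ of Definition \ref{weakcon_def}) yields $(F(X_{n-}), Y_n) \RT (F(X_-), Y)$, so Theorem \ref{SI_limit} gives $F(X_{n-}) \cdot Y_n \RT F(X_-) \cdot Y$. Combining with $U_n \RT U$ and passing to the limit in $X_n = U_n + F_n(X_{n-})\cdot Y_n$ produces $X = U + F(X_-)\cdot Y$.

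The main obstacle is twofold. First, the varying coefficient $F_n$ forces the splitting above, and each half requires a slightly different machinery (UT plus uniform approximation for one term; continuity, joint convergence, and Theorem \ref{SI_limit} for the other). Second, tightness of $\{X_n\}$ in Skorokhod topology does not follow directly from Definition \ref{UT_def}, which only gives stochastic boundedness; one has to invoke the good-decomposition characterization of UT to control oscillations. The continuity of $F$ is essential precisely because Skorokhod convergence of $X_n$ passes to $F(X_n)$ only via continuous composition.
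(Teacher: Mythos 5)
The paper offers no proof of this statement to compare against: Theorem \ref{SDE_limit} is quoted as Theorem 7.5 of Kurtz and Protter \cite{KP96_2} and used as a black box. Judged on its own, your sketch follows exactly the strategy of the cited source: relative compactness of the implicitly defined $X_n$ from the uniform bound on $F_n$ together with UT, extraction of a convergent subsequence, the splitting $F_n(X_{n-})\cdot Y_n = F(X_{n-})\cdot Y_n + (F_n-F)(X_{n-})\cdot Y_n$, and identification of the first piece through the stochastic-integral convergence theorem (Theorem \ref{SI_limit} here, Theorem 4.2 there). Two soft spots are worth naming, since they are where the real work lives. First, Definition \ref{UT_def} by itself gives only stochastic boundedness of the integrals; the passage to Skorokhod tightness of $F_n(X_{n-})\cdot Y_n$ for a merely bounded, non-convergent integrand requires the existence of decompositions $Y_n=M_n+A_n$ with uniformly controlled $[M_n,M_n]$ and $T_t(A_n)$ --- i.e.\ the converse direction of Theorem \ref{FUT1} --- which is a theorem in \cite{KP96_2} but is nowhere stated in the present paper, so "invoke the good-decomposition characterization" is an appeal to an external result rather than to anything available here. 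Second, in disposing of $(F_n-F)(X_{n-})\cdot Y_n$ you normalize by $\sup_{s\leq t}\|(F_n-F)(X_n(s-))\|$, which is an anticipating random variable and hence not an admissible normalization for the UT bound; the clean version stops at $\tau_n=\inf\{s:\|(F_n-F)(X_n(s-))\|>\delta\}$, applies UT to the adapted integrand $(F_n-F)(X_{n-})1_{[0,\tau_n)}/\delta$, and uses $P[\tau_n\leq t]\rt 0$. Neither point is a wrong turn --- both are repaired by standard arguments --- but as written the sketch defers precisely the steps that make the theorem nontrivial.
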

The corresponding theorem for general $\LL$ is:
\begin{theorem}
 \label{SDE_limit2} (\citep[Theorem 7.6]{KP96_2})
For each $n=1,2,\hdots$, let $Y_n$ be an $\{\SC{F}^n_t\}$-adapted,  standard $(\LL, \hat{\H})^\#$-semimartingale. Suppose that $(U_n, X_n, Y_n)$ satisfies
$$X_n = U_n + F_n(X_{n-})\cdot Y_n,$$
where $F_n, F : \LL \rt \hat{\H}$ are measurable functions satisfying
\begin{itemize}
 \item $F_n \rt F$ uniformly over compact subsets of $\LL$;
\item  $F$ is  continuous;
\item $\sup_n\sup_x \|F_n(x)\|_{\hat{\H}} < \infty;$ 
\item for each $\delta>0$, there exists a compact $E_\delta$ such that $\sup_{s\leq t}\|x(s)\|_{\hat{\H}} \leq \delta$ implies that $F_n(x(t)) \in E_\delta$ for all $n$.
\end{itemize}
If $(U_n,Y_n) \RT (U,Y)$ and $\{Y_n\}$ is UT, then $\{(U_n, X_n, Y_n)\}$ is relatively compact and any limit point $(U, X, Y)$ satisfies 
\begin{align}\label{limsde1}
X = U + F(X_{-})\cdot Y.
\end{align} 
\end{theorem}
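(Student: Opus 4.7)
The plan is to establish relative compactness of $\{(U_n, X_n, Y_n)\}$ and then identify any limit point via Theorem \ref{SI_limit} applied to the sequence of stochastic integrals $F_n(X_{n-})\cdot Y_n$. Throughout, I would work with stopping-time localizations to reduce to bounded paths and then remove the cutoffs.

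For the tightness step, I would introduce $\tau_n^c = \inf\{t : \|X_n(t)\|_\LL \vee \|X_n(t-)\|_\LL \geq c\}$. By the compactness hypothesis, $F_n(X_n(s-)) \in E_c$ (a compact subset of $\hat{\H}$) for $s \leq \tau_n^c$, so in particular $\sup_n \sup_{s \leq \tau_n^c} \|F_n(X_n(s-))\|_{\hat{\H}} \leq R_c < \infty$. The UT hypothesis on $\{Y_n\}$ then delivers stochastic boundedness, uniform in $n$, of $\sup_{s \leq t \wedge \tau_n^c}\|F_n(X_{n-})\cdot Y_n(s)\|_\LL$. Combining this with tightness of $\{U_n\}$ (which follows from $U_n \RT U$) and the SDE itself, the sequence $\{X_n(\cdot \wedge \tau_n^c)\}$ is tight in $D_\LL[0,\infty)$; letting $c \to \infty$ and using a standard escape-from-compacts argument upgrades this to tightness of $\{X_n\}$. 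Relative compactness of the triple $\{(U_n, X_n, Y_n)\}$ (in the sense of Definition \ref{weakcon_def}) then follows.

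For the identification step, pass to a weakly convergent subsequence along which $(U_n, X_n, Y_n) \RT (U, X, Y)$, and invoke the Skorohod representation theorem (applied to the finite-dimensional projections of $Y_n$ against each finite family $\phi_1,\ldots,\phi_d$) so that convergence is almost sure. The continuity of $F$ together with uniform convergence $F_n \to F$ on compact subsets of $\LL$ yields $F_n(X_n(t-)) \to F(X(t-))$ in $\hat{\H}$ at every continuity point $t$ of $X$; the compactness condition prevents the image sequence from escaping to infinity in $\hat{\H}$ between such points, allowing one to promote pointwise convergence to $D_{\hat{\H}}$-convergence of $F_n(X_{n-})$ to $F(X_-)$. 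Theorem \ref{SI_limit} then gives $F_n(X_{n-})\cdot Y_n \RT F(X_-)\cdot Y$ jointly with $(U_n, Y_n) \RT (U,Y)$, and passing to the limit in $X_n = U_n + F_n(X_{n-})\cdot Y_n$ produces \eqref{limsde1}.

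The hard part will be the joint $D_{\hat{\H}}$-convergence of $F_n(X_{n-})$ together with $Y_n$ in a form that Theorem \ref{SI_limit} can consume. In finite dimensions this is essentially automatic, but in the present setting bounded subsets of $\hat{\H}$ need not be relatively compact, so pointwise convergence of $F_n(X_n(t-))$ does not by itself imply tightness in $D_{\hat{\H}}$. This is precisely the role of the compactness hypothesis on $F_n$: it confines $\{F_n(X_{n-}(\cdot)) : n \geq 1\}$ to a fixed compact of $\hat{\H}$ whenever $\{X_n\}$ is bounded in $\LL$, restoring the finite-dimensional style argument used in Theorem \ref{SDE_limit}. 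Some additional care is needed at jump times of $X$ to upgrade pointwise convergence at continuity points to Skorohod $J_1$-convergence; this is handled by choosing a countable dense set of continuity points of $X$ and exploiting right-continuity together with the compactness constraint.
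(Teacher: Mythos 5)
First, a point of reference: the paper does not prove this statement at all --- it is quoted verbatim as Theorem 7.6 of Kurtz and Protter \cite{KP96_2} and used as a black box, so there is no in-paper proof to compare against. Judged on its own merits, your outline follows the right general strategy (localize, exploit UT, identify the limit via Theorem \ref{SI_limit}), and your identification step is essentially sound: once relative compactness of $\{(U_n,X_n,Y_n)\}$ is in hand, Skorohod representation plus the asymptotic equicontinuity of $\{F_n\}$ on compact subsets of $\LL$ (which follows from uniform convergence on compacts to the continuous $F$) gives $F_n(X_{n-})\RT F(X_-)$ in $D_{\hat{\H}}[0,\infty)$ jointly with $(U_n,Y_n)$, and Theorem \ref{SI_limit} then lets you pass to the limit in $X_n=U_n+F_n(X_{n-})\cdot Y_n$.

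The genuine gap is in the tightness step. From $\sup_n\sup_x\|F_n(x)\|_{\hat{\H}}<\infty$ and the UT property you correctly obtain uniform stochastic boundedness of $\sup_{s\leq t}\|F_n(X_{n-})\cdot Y_n(s)\|_{\LL}$, but you then assert that this, combined with tightness of $\{U_n\}$, makes $\{X_n(\cdot\wedge\tau_n^c)\}$ tight in $D_{\LL}[0,\infty)$. Stochastic boundedness of the running supremum yields neither the compact containment condition in $\LL$ (bounded subsets of an infinite-dimensional $\LL$ are not relatively compact) nor any control on the oscillation of the integral, so tightness in $D_{\LL}$ does not follow from it; this is precisely the hard content of the theorem, not a routine consequence of UT. The repair --- and the route taken in \cite{KP96_2} --- is an Euler-type approximation: set $\tau^{\epsilon}_{n,0}=0$, $\tau^{\epsilon}_{n,k+1}=\inf\{t>\tau^{\epsilon}_{n,k}:\|X_n(t)-X_n(\tau^{\epsilon}_{n,k})\|_{\LL}\geq\epsilon\}$, and let $X^{\epsilon}_n(t)=X_n(\tau^{\epsilon}_{n,k})$ on $[\tau^{\epsilon}_{n,k},\tau^{\epsilon}_{n,k+1})$. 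The integrand $F_n(X^{\epsilon}_{n-})$ is then a piecewise-constant process with values in the fixed compact $E_\delta$ --- this is where the fourth hypothesis is actually consumed, to get compact containment and oscillation control for the approximating integrals --- while $\|F_n(X_{n-})-F_n(X^{\epsilon}_{n-})\|_{\hat{\H}}$ is controlled by the modulus of continuity of $F$ plus $\sup_{K}\|F_n-F\|_{\hat{\H}}$, so by UT the error $(F_n(X_{n-})-F_n(X^{\epsilon}_{n-}))\cdot Y_n$ is small uniformly in $n$; relative compactness of $\{X_n\}$ then follows by letting $\epsilon\rt 0$. Without some version of this two-step approximation your argument does not establish the relative compactness that the rest of the proof relies on.
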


\begin{remark}
Suppose that in addition to the conditions of Theorem \ref{SDE_limit} or Theorem \ref{SDE_limit2}, strong uniqueness holds for \eqref{limsde1} for any versions of $(U,Y)$ for which $Y$ is an $\H^\#$ or $(\LL,\hat{\H})^\#$-semimartingale and that $(U_n,Y_n)\rt (U,Y)$ in probability. Then $(U_n,Y_n, X_n) \rt (U,Y,X)$ in probability.
\end{remark}

%%%%%%%%%%%%%%%%%%%%%%%%%%%%%%%%%%%%%%%%%%%%%%%%%%%%

\section{A few lemmas}\label{lemmas}
\begin{lemma}\label{barmap}
Let $\H$ and $\K$ be two separable Hilbert spaces. Let $\LL = \K\oth\H = HS(\H,\K)$. Then $\H$ is continuously embedded in $L(\LL,\K)$.
\end{lemma}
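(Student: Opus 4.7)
The plan is to produce the embedding explicitly and then read off its boundedness from the standard inequality $\|T\|_{op} \leq \|T\|_{HS}$ on Hilbert--Schmidt operators.

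First I would define the candidate embedding $\iota:\H \to L(\LL,\K)$ by evaluation:
\[
\iota(h)(T) := T(h), \qquad h\in\H,\ T\in\LL = HS(\H,\K).
\]
For each fixed $h$, linearity of $T\mapsto T(h)$ is immediate. To see that $\iota(h)$ lies in $L(\LL,\K)$, I would estimate
\[
\|\iota(h)(T)\|_\K = \|T(h)\|_\K \leq \|T\|_{op}\|h\|_\H \leq \|T\|_{HS}\|h\|_\H = \|T\|_\LL \|h\|_\H,
\]
using the relation between the operator norm and the Hilbert--Schmidt norm recalled just before the lemma. Taking supremum over $\|T\|_\LL \leq 1$ gives $\|\iota(h)\|_{L(\LL,\K)} \leq \|h\|_\H$, so $\iota$ is a bounded linear map of norm at most $1$.

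To confirm that $\iota$ is genuinely an embedding (injective), I would exhibit, for any $h\neq 0$, a Hilbert--Schmidt operator on which $\iota(h)$ does not vanish. Pick any nonzero $k\in\K$ and take the rank-one element $k\otimes h\in \K\otimes\H\subset HS(\H,\K)=\LL$ acting by $(k\otimes h)(h') = \<h',h\>_\H\,k$. Then
\[
\iota(h)(k\otimes h) = (k\otimes h)(h) = \|h\|_\H^{\,2}\, k \neq 0,
\]
so $\iota(h)\neq 0$, proving injectivity. Combining injectivity with the norm bound above yields the continuous embedding of $\H$ into $L(\LL,\K)$.

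There is no real obstacle here; the lemma is essentially a structural observation. The only point to be careful about is the identification $\LL = \K\oth\H = HS(\H,\K)$ (so that elements of $\LL$ really act as operators $\H\to\K$, not the other way around), which ensures that the evaluation $T\mapsto T(h)$ makes sense and lands in $\K$.
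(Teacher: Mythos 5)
Your proof is correct and follows essentially the same route as the paper: the paper also defines the embedding by evaluation, $\bar{h}(l)=l(h)$, and bounds its norm by $\|h\|_\H$ via the inequality between the operator and Hilbert--Schmidt norms. Your explicit verification of injectivity using the rank-one element $k\otimes h$ is a detail the paper leaves implicit, but it is the natural way to fill it in.
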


\begin{proof}
For $h\in \H$, define $\bar{h} \in L(\LL,\K)$ by
$$\bar{h}(l) = l(h), \ \ \ l \in \LL.$$
Notice that $h \in \H \Rt \bar{h} \in L(\LL,\K)$ is an isomorphism and $\|\bar{h}\| \leq \|h\|_\H$.

\end{proof}

\begin{lemma}\label{tildemap}
Let $\H,\K$ and $\LL$ be as in Lemma \ref{barmap}. Let $u \in L(\K,\LL)$, $v \in HS(\H,\K)$. Define 
$\tilde{uv} \in L(\H\oth\H,\K)$ by
$$\tilde{uv}(h_1\ot h_2) = \bar{h}_1uv (h_2),$$
where $\bar{h}_1$ is as in the proof of the previous lemma. Then $\tilde{uv} \in HS(\H\oth\H,K)$ and 
$$\|\tilde{uv}\|_{ HS(\H\oth\H,K)} = \|uv\|_{HS(\H,\LL)}.$$
\end{lemma}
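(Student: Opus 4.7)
The plan is to compute the Hilbert–Schmidt norm of $\widetilde{uv}$ directly using an orthonormal basis and verify it matches $\|uv\|_{HS(\H,\LL)}$. First I would observe that $uv \in HS(\H,\LL)$: this is the standard ideal property of Hilbert–Schmidt operators, since $v \in HS(\H,\K)$ and $u \in L(\K,\LL)$ implies the composition is Hilbert–Schmidt with $\|uv\|_{HS(\H,\LL)} \leq \|u\|_{op}\|v\|_{HS(\H,\K)}$. So $\|uv\|_{HS(\H,\LL)}$ is finite, and it suffices to show $\widetilde{uv}$ has the same Hilbert–Schmidt norm.

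Next I would pick an orthonormal basis $\{e_i\}$ of $\H$; then $\{e_i \otimes e_j\}$ is an orthonormal basis of $\H \oth \H$. Since $\widetilde{uv}$ is defined on simple tensors by $\widetilde{uv}(h_1 \otimes h_2) = \bar h_1 u v(h_2) = (uv(h_2))(h_1)$, linearity extends it at least to the algebraic tensor product, and its Hilbert–Schmidt norm (if finite) then determines a unique bounded extension to $\H \oth \H$. Computing on the basis,
\begin{align*}
\|\widetilde{uv}\|_{HS(\H\oth\H,\K)}^2 = \sum_{i,j}\|\widetilde{uv}(e_i \otimes e_j)\|_\K^2 = \sum_{i,j}\|(uv(e_j))(e_i)\|_\K^2.
\end{align*}

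For each fixed $j$, the element $uv(e_j)$ lives in $\LL = HS(\H,\K)$, so the inner sum $\sum_i \|(uv(e_j))(e_i)\|_\K^2$ is exactly $\|uv(e_j)\|_\LL^2$ by the definition of the Hilbert–Schmidt norm on $\LL$. Summing in $j$ then yields $\sum_j \|uv(e_j)\|_\LL^2 = \|uv\|_{HS(\H,\LL)}^2$, which gives both that the total is finite (so the extension to $HS(\H\oth\H,\K)$ is legitimate) and the claimed equality of norms.

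The main step requiring care is bookkeeping the three different pairings: the identification $\bar h_1(l) = l(h_1)$, the composition $uv:\H \to \LL$, and the fact that Hilbert–Schmidt operators into $\K$ have norm $\sum_i \|T(e_i)\|_\K^2$. Once these are lined up, the double sum collapses cleanly. Well-definedness on the completion $\H\oth\H$ (as opposed to just the algebraic tensor product) follows automatically from the computed finite Hilbert–Schmidt norm, using Lemma \ref{barmap} to see that each term $\bar h_1 uv(h_2)$ makes sense in $\K$.
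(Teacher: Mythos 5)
Your proposal is correct and follows essentially the same route as the paper's proof: expand $\widetilde{uv}$ on the orthonormal basis $\{e_i \otimes e_j\}$ of $\H\hat{\ot}_{HS}\H$, identify $\widetilde{uv}(e_i\otimes e_j) = uv(e_j)(e_i)$, and collapse the double sum using the Hilbert--Schmidt norm on $\LL = HS(\H,\K)$. Your added remarks on the ideal property of Hilbert--Schmidt operators and on extending from the algebraic tensor product to the completion are sensible bookkeeping that the paper leaves implicit.
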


\begin{proof}
If $\{e_i\}$ is an orthonormal basis of $\H$, then $\{e_i\ot e_j\}$ forms an orthonormal basis for $\H\oth\H$. 
Notice that 
$$\tilde{uv}(e_i\ot e_j) = \bar{e}_iuv(e_j) = uv(e_j)(e_i).$$
It follows that
\begin{align*}
\sum_{i,j}\|\tilde{uv}(e_i\ot e_j)\|_\K^2& = \sum_j \sum_i \|uv(e_j)(e_i)|\|^2_\K = \sum_j \|uv(e_j)\|^2_\LL\\
& = \|uv\|^2_{HS(\H,\LL)}
\end{align*}
\end{proof}

\np
Notice that if $\K=\R$, then $\tilde{uv} =u\ot v$. The following lemma is a generalization of Lemma \ref{ch1}.

\begin{lemma}
Let $H$, $V$ and $U$  be  adapted cadlag processes taking values in $\H$, $\LL\equiv HS(\H,\K)$  and $L(\K,\LL)$ respectively. Let $Z$ be an adapted $\H$-valued semimartingale. Then
\begin{equation}
\label{eqch3}
\int_0^t \bar{H(s-)} U(s-)V(s-) dZ(s) = \int_0^t \tilde{U(s-)V(s-)} dR(s),
\end{equation}
where $R(t) = \int_0^t H(s-) \ot dZ(s)$. Here the \ $\tilde{}$ \ and\ \ $\bar{\phantom{d} }$\  mappings are as in Lemma \ref{tildemap} and the proof of Lemma \ref{barmap} respectively.

\end{lemma}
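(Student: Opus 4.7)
The plan is to prove \eqref{eqch3} by reducing to a dense family of elementary integrands and then approximating, in the same spirit as the proof of Theorem \ref{ch1}. Setting $W(s):=U(s)V(s)$, note that $W$ is cadlag adapted with values in $HS(\H,\LL)$, since $V$ is Hilbert--Schmidt and $U$ is bounded. Both sides of \eqref{eqch3} depend on $U,V$ only through $W$, so I reformulate the claim as
\[
\int_0^t \bar{H(s-)}\,W(s-)\,dZ(s) \;=\; \int_0^t \tilde{W(s-)}\,dR(s).
\]

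\textbf{Step 1 (elementary case).} I would first treat $W(s)=\xi(s)(\ell\otimes g)$, where $\xi$ is a real-valued cadlag adapted process, $\ell\in\LL$, $g\in\H$, and $(\ell\otimes g)(h):=\<g,h\>_\H\,\ell$. Fixing an orthonormal basis $\{f_k\}$ of $\K$ and writing $\ell=\sum_k f_k\otimes\ell_k$ with $\ell_k\in\H$, a direct computation gives
\begin{align*}
\bar{H(s-)}\,W(s-) &= \xi(s-)\,\ell(H(s-))\otimes g \;=\; \xi(s-)\sum_k\<\ell_k,H(s-)\>_\H\,f_k\otimes g,\\
\tilde{W(s-)}(h_1\otimes h_2) &= \xi(s-)\,\<g,h_2\>_\H\,\ell(h_1) \;=\; \xi(s-)\sum_k\<\ell_k,h_1\>_\H\,\<g,h_2\>_\H\,f_k.
\end{align*}
Taking the inner product with $f_k$ on both sides of \eqref{eqch3}, the left-hand side becomes $\int_0^t\xi(s-)\<\ell_k,H(s-)\>_\H\,d\<g,Z(s)\>_\H$, while the right-hand side, by Lemma \ref{obs1} applied to $R(s)=\int_0^s H(u-)\otimes dZ(u)$, equals
\[
\int_0^t\xi(s-)\,d\<R(s),\ell_k\otimes g\>_{\H\oth\H} \;=\; \int_0^t\xi(s-)\<H(s-),\ell_k\>_\H\,d\<Z(s),g\>_\H.
\]
These coincide, establishing \eqref{eqch3} for such $W$.

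\textbf{Step 2 (linearity and approximation).} By linearity, \eqref{eqch3} extends to all simple processes $W(s)=\sum_{i=1}^n\xi_i(s)(\ell_i\otimes g_i)$. Since the algebraic tensor product $\LL\otimes\H$ is dense in $HS(\H,\LL)=\H\oth\LL$, a standard step-process-plus-localization argument yields simple approximants $W_n$ with $\sup_{s\le T}\|W_n(s)-W(s)\|_{HS(\H,\LL)}\to 0$ in probability. Lemma \ref{tildemap} supplies the isometry $\|\tilde{W_n}-\tilde{W}\|_{HS(\H\oth\H,\K)}=\|W_n-W\|_{HS(\H,\LL)}$, so both the left- and right-hand integrands are controlled by the same quantity. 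The bound \eqref{SI_bound2}, combined with $\|\bar{H(s-)}\|_{op}\le\|H(s-)\|_\H$ (from the proof of Lemma \ref{barmap}) and the fact that the Hilbert--Schmidt norm dominates the operator norm, then lets me pass to the limit in probability in the two stochastic integrals simultaneously, recovering \eqref{eqch3} for general $W=UV$.

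\textbf{Main obstacle.} The principal difficulty is the approximation step: producing cadlag adapted simple processes $W_n$ that converge to $W=UV$ uniformly in time (in probability) in the Hilbert--Schmidt norm on $HS(\H,\LL)$. Once such approximants are fixed, the isometry of Lemma \ref{tildemap} together with the semimartingale integration bound \eqref{SI_bound2} makes the limit passage routine, and the core algebraic identity of Step 1 is a direct application of Lemma \ref{obs1}.
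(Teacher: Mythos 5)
Your proof is correct, but it follows a genuinely different route from the paper's. The paper scalarizes: for $\l \in L(\K,\R)$ it identifies $\l\tilde{uv}$ with an element of $HS(\H,\H)=\H\oth\H$, applies $\l$ to both sides of \eqref{eqch3}, and reduces the resulting real-valued identity to the chain rule of Theorem \ref{ch1} (with $\K$ there playing the role of $\H$ here), so that both the elementary-tensor computation and the limit passage are inherited from that theorem rather than redone. You instead work directly with the $\K$-valued integrals: your Step 1 verifies the identity on $W=\xi\,(\ell\ot g)$ componentwise via Lemma \ref{obs1} --- which is precisely the computation sitting inside the proof of Theorem \ref{ch1} --- and you then carry out the approximation yourself using the isometry of Lemma \ref{tildemap} and the bound \eqref{SI_bound2}. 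Both arguments are sound; yours is longer but fully explicit, while the paper's is terser at the cost of an identification ($\l\hat{uv}\leftrightarrow\l\tilde{uv}$) the reader must unwind. Two points worth making explicit in your write-up, neither of which is a gap: (i) Lemma \ref{tildemap} defines the map $\tilde{\phantom{d}}$ only on products $uv$, so to apply the isometry to $W_n-W$ you should note that the formula $\tilde{w}(h_1\ot h_2)=w(h_2)(h_1)$ and the norm computation in that lemma extend $\tilde{\phantom{d}}$ to a linear isometry on all of $HS(\H,\LL)$; (ii) the limit passage on the left-hand side requires a localization controlling $\sup_{s\leq T}\|H(s)\|_\H$, since the operator-norm estimate you use is $\|\bar{H(s-)}(W_n(s-)-W(s-))\|_{op}\leq \|H(s-)\|_\H\,\|W_n(s-)-W(s-)\|_{HS}$.
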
 

\begin{proof}
Notice that both sides take values in $\K$. For  $u \in L(\K,\LL)$, $v \in HS(\H,\K)$, define $\hat{uv} \in L(\H, HS(\H,\K))$ by
$$\hat{uv}(h) = \bar{h}uv.$$
Note that $\hat{uv} \neq uv$.
It is easy to check that in fact, $\hat{uv} \in HS(\H, HS(\H,\K))$. Thus for any $\l \in L(\K,\R)$, $\l\hat{uv} \in HS(\H,HS(\H,R))$ and can be identified with $\l\tilde{uv} \in HS(\H,\H)$. 
The proof now follows by applying $\l$ on both sides of (\ref{eqch3}) and using Lemma \ref{ch1} to verify their equality.
\end{proof}

\begin{lemma}
\label{qd1}
Let $\H$ be  a separable Hilbert space and $Y$  an $\H$-valued adapted semimartingale. 
Suppose that $J$ and $ V$ are  cadlag, adapted  processes taking values in $\H$.
Define  $X =  V_-\cdot Y$. Note that $X$ is a real- valued semimartingale. 
Let  $U(t) = \int_0^t J(s-)dX(s)$. Then for any $\H$-valued adapted semimartingale $Z$, we have
\[[U,Z] = \int J_-\ot V_- \ d[Z,Y]^\ot.\]
\end{lemma}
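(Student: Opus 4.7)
The plan is to prove the identity first for rank-one integrands and then extend by linearity and approximation. Specifically, I would first consider $J(t)=\xi(t)\phi$ and $V(t)=\eta(t)\psi$ with $\xi,\eta$ real-valued cadlag adapted and $\phi,\psi\in\H$ constant, then extend by $\R$-bilinearity to finite sums of such rank-one processes, and finally pass to the limit for general cadlag adapted $\H$-valued $J,V$.

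For the rank-one case, on the left-hand side, expanding the Riemann-sum definition of Theorem \ref{SIdef1} for $V(s-)=\eta(s-)\psi$ gives $X(t)=\int_0^t\eta(s-)\,d\<Y(s),\psi\>_\H$, a real semimartingale, and therefore $U(t)=\phi\int_0^t\xi(s-)\eta(s-)\,d\<Y(s),\psi\>_\H\equiv\phi M(t)$ with $M$ real. The scalar covariation $[U,Z]_t$, which is the limit of $\sum_i\<\Delta U_i,\Delta Z_i\>_\H=\sum_i\Delta M_i\,\Delta\<\phi,Z\>_{\H,i}$, coincides with $[M,\<\phi,Z\>_\H]_t$; pulling $\xi\eta$ out of this real-valued bracket gives $[U,Z]_t=\int_0^t\xi(s-)\eta(s-)\,d[\<Y,\psi\>_\H,\<Z,\phi\>_\H]_s$. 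For the right-hand side, $J(s-)\otimes V(s-)=\xi(s-)\eta(s-)\,\phi\otimes\psi$, and the Hilbert--Schmidt inner product identity $\<a\otimes b,c\otimes d\>_{HS}=\<a,c\>_\H\<b,d\>_\H$ applied to the Riemann-sum definition of $[Z,Y]^\otimes$ yields $\<\phi\otimes\psi,[Z,Y]^\otimes_t\>_{HS}=[\<Z,\phi\>_\H,\<Y,\psi\>_\H]_t$. Both sides therefore coincide by symmetry of the bracket.

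For general cadlag adapted $\H$-valued $J,V$, I would approximate them by simple processes $J_n(t)=\sum_k\xi_k^{(n)}(t)\phi_k^{(n)}$, $V_n(t)=\sum_\ell\eta_\ell^{(n)}(t)\psi_\ell^{(n)}$ with $\sup_{s\leq t}\|J_n(s)-J(s)\|_\H\to 0$ and $\sup_{s\leq t}\|V_n(s)-V(s)\|_\H\to 0$ in probability. On the right-hand side, Remark \ref{tensorcov} together with polarization implies that $[Z,Y]^\otimes$ is of locally finite variation in the Hilbert--Schmidt norm, so the integral against it is pathwise Lebesgue--Stieltjes and convergence of $J_n\otimes V_n$ in HS-norm transfers to the integral. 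On the left-hand side, the standardness of $Y$ as an $\H^\#$-semimartingale (Theorem \ref{SIdef1}) gives $\sup_{s\leq t}|X_n(s)-X(s)|\to 0$ in probability for $X_n=V_{n,-}\cdot Y$, and then Theorem \ref{SI_limit} applied in probability yields $\sup_{s\leq t}\|U_n(s)-U(s)\|_\H\to 0$ in probability; the integration-by-parts identity \eqref{intpart1} finally transfers this convergence to $[U_n,Z]\to[U,Z]$.

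The main obstacle lies in this last step: one has to handle three nested stochastic integrals (the one defining $X_n$, the one defining $U_n$, and the two appearing in the integration-by-parts representation of $[U_n,Z]$) simultaneously, relying on uniform-in-$n$ bounds that follow from the standardness of $Y$ and $Z$. Once these bounds are assembled, the convergence of each piece follows from the stability results in Section \ref{weakreview}.
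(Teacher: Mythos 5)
Your proof is correct in outline, but it takes a genuinely different and considerably longer route than the paper. The paper proves the identity by a single direct Riemann--sum computation valid for arbitrary cadlag adapted $J,V$: writing $A=\int_0^t J(s-)\ot V(s-)\,d[Z,Y]^\ot(s)$ as $\lim_{\|\s\|\rt0}\sum_i \<J(t_i)\ot V(t_i),(Z(t_{i+1})-Z(t_i))\ot(Y(t_{i+1})-Y(t_i))\>_{HS}$, it applies the identity $\<a\ot b,c\ot d\>_{HS}=\<a,c\>_\H\<b,d\>_\H$ \emph{termwise} — each summand already involves only elementary tensors, regardless of whether $J$ and $V$ are simple — and then recognizes $\<V(t_i),Y(t_{i+1})-Y(t_i)\>_\H$ as an increment of the discretized integral $X_\s$ and $J(t_i)(X_\s(t_{i+1})-X_\s(t_i))$ as an increment of $U_\s$, so that the sum converges directly to $[U,Z]_t$. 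This observation is exactly the shortcut your argument misses: by performing the rank-one reduction at the level of the \emph{processes} rather than at the level of the partition sums, you are forced into the three-layer approximation argument at the end (convergence of $X_n=V_{n,-}\cdot Y$, then of $U_n=J_{n,-}\cdot X_n$, then of $[U_n,Z]$ via integration by parts). That final step is the weakest part of your write-up: the convergence $Z_-\cdot U_n\rt Z_-\cdot U$ needed in \eqref{intpart1} requires uniform tightness (or a localized uniform bound) for the sequence $\{U_n\}$ of iterated integrals, which you assert but do not establish; it can be done (localize on $\sup_{s\le t}\|V(s)\|\vee\|J(s)\|\le K$ and use standardness of $Y$ twice), but it is real work that the paper's termwise argument avoids entirely. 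Your rank-one computation itself — identifying $[U,Z]$ with $\int\xi_-\eta_-\,d[\<Y,\psi\>_\H,\<Z,\phi\>_\H]$ and matching it against $\<\phi\ot\psi,[Z,Y]^\ot\>_{HS}$ — is correct and is essentially the same algebra the paper performs inside each term of the sum.
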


\begin{proof}
 Let $\s = \{t_i\}$ be a partition of $[0,t]$. For a process $H$, define $H^\sigma$
by
$$H^\s(s) = \sum_{i}H(t_i)1_{[t_i,t_{i+1}]}(s).$$
Let
\[X_\s(u) \equiv \int_0^u V^\s(s-) \ dY_s = \sum_i \<V(t_i\wedge u-), Y(t_{i+1}\wedge u) - Y(t_i\wedge u)\>_\H \ \ \ \ 0\leq u \leq t\] and
\[U_\s(u) \equiv \int_0^u J^\s(s-)\ot dX_\s(s) = \sum_i J(t_i\wedge u-)\ot (X_\s(t_{i+1}\wedge u) - X_\s(t_i\wedge u)). \]
Denote
$$A = \int_0^t J(s-)\ot V(s-) d[Z,Y]^\ot(s)$$
and notice that

\begin{align*} 
A&=  \lim_{\|\s\|\rt 0} \sum_i \<J(t_i)\ot V(t_i),(Z(t_{i+1} - Z(t_i)))\ot (Y(t_{i+1}) - Y(t_i))\>_{HS}   \\
& = \lim_{\|\s\|\rt 0} \sum_i \<J(t_i), Z(t_{i+1} - Z(t_i)) \>_{\H} \  \<V(t_i),Y(t_{i+1}) - Y(t_i) \>_{\H}  \\
& = \lim_{\|\s\|\rt 0} \sum_i \<J(t_i), Z(t_{i+1} - Z(t_i)) \>_{\H} \  (V^\s_-\cdot  Y(t_{i+1}) - V^\s_-\cdot  Y(t_i))\\
& = \lim_{\|\s\|\rt 0} \sum_i \<J(t_i), Z(t_{i+1} - Z(t_i)) \>_{\H} (X_\s(t_{i+1}) - X_\s(t_i)) \\
& = \lim_{\|\s\|\rt 0} \sum_i \<J(t_i)(X_\s(t_{i+1}) - X_\s(t_i)), Z(t_{i+1} - Z(t_i)) \>_{\H} \\
& = \lim_{\|\s\|\rt 0} \sum_i \<U_\s(t_{i+1}) - U_\s(t_i), Z(t_{i+1}) - Z(t_i)\>_{\H} = [U, Z]_t.
\end{align*}
\end{proof}

If $X$ is an  $\LL \equiv HS(\H,\K)$-valued semimartingale and $Y$ is an $\H$-valued semimartingale, then by Theorem \ref{SIdef2}, the stochastic integral $X_-\cdot Y$ exists. Now, by Lemma \ref{barmap}, $\bar{Y}$ is an $L(\LL,\K)$-valued process, and consequently, $\bar{Y}_-\cdot X$ exists. Define the (generalized) quadratic variation  process between $X$ and $Y$ as
\begin{align}\label{quadgen}
[[X,Y]]_t = X(t)(Y(t)) - X(0)(Y(0)) -X_-\cdot Y(t) -\bar{Y}_-\cdot X(t).
\end{align}
$[[X,Y]]$ is a $\K$-valued process and
$$[[X,Y]]_t = \lim_{\|\sigma\| \rt 0} \sum_i (X(t_{i+1}) - X(t_i))(Y(t_{i+1}) - Y(t_i)) $$
where $\sigma = \{t_i\}$ is a partition of $[0,t]$, and $\|\sigma\| = \sup(t_{i+1}-t_i)$ is the mesh of the partition $\sigma$.
The next result is a generalization of Lemma \ref{qd1}.

\begin{lemma}
\label{qd2}
Let $\H$ be  a separable Hilbert space and $Y$  an $\H$-valued adapted semimartingale. Let $\LL = \K\oth\H\equiv HS(\H,\K)$.
Suppose that $J$ and $ V$ are  cadlag, adapted  processes taking values in $HS(\K,\LL)$ and $HS(\H,\K)$ respectively.
Define  $X =  V_-\cdot Y$. Note that $X$ is a $\K$- valued semimartingale. 
Let  $U(t) = \int_0^t J(s-)dX(s)$. Then for any $\H$-valued adapted semimartingale $Z$, we have
\[[[U,Z]]_t = \int_0^t \tilde{J(s-) V(s-)} \ d[Z,Y]^\ot(s).\]
\end{lemma}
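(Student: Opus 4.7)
The plan is to extend the proof of Lemma \ref{qd1} to the operator-valued setting, using the $\tilde{\phantom{u}}$ map of Lemma \ref{tildemap} in place of the scalar inner product on $\H$. First, fix a partition $\sigma = \{t_i\}$ of $[0, t]$ and introduce the piecewise-constant interpolants $V^\sigma(s) = \sum_i V(t_i) 1_{[t_i, t_{i+1})}(s)$ and $J^\sigma(s) = \sum_i J(t_i) 1_{[t_i, t_{i+1})}(s)$, along with the approximating integrals $X_\sigma(u) = \int_0^u V^\sigma(s-)\, dY(s)$ and $U_\sigma(u) = \int_0^u J^\sigma(s-)\, dX_\sigma(s)$. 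By Theorem \ref{SIdef2}, $X_\sigma \to X$ in $\K$ and $U_\sigma \to U$ in $\LL = HS(\H, \K)$, both uniformly on compacts in probability as $\|\sigma\| \to 0$.

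Second, at the partition points there is the exact identity $X_\sigma(t_{i+1}) - X_\sigma(t_i) = V(t_i)(Y(t_{i+1}) - Y(t_i))$, and consequently $U_\sigma(t_{i+1}) - U_\sigma(t_i) = J(t_i)V(t_i)(Y(t_{i+1}) - Y(t_i)) \in \LL$. Applying this operator to the increment $Z(t_{i+1}) - Z(t_i) \in \H$ and invoking the defining identity $\tilde{uv}(h_1 \otimes h_2) = uv(h_2)(h_1)$ of Lemma \ref{tildemap} produces the key algebraic step
\[
(U_\sigma(t_{i+1}) - U_\sigma(t_i))(Z(t_{i+1}) - Z(t_i)) = \tilde{J(t_i)V(t_i)}\bigl((Z(t_{i+1}) - Z(t_i)) \otimes (Y(t_{i+1}) - Y(t_i))\bigr).
\]
This is the operator-valued analogue of the inner-product manipulation carried out in the chain of equalities inside the proof of Lemma \ref{qd1}, and it is an identity at the partition points, not an approximation.

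Third, sum over $i$ and send $\|\sigma\| \to 0$. The right-hand side becomes a Riemann sum for $\int_0^t \tilde{J(s-)V(s-)}\, d[Z, Y]^\otimes(s)$; because $[Z, Y]^\otimes$ is of locally finite variation in the Hilbert--Schmidt norm by Remark \ref{tensorcov} and $\tilde{J(\cdot-)V(\cdot-)}$ is cadlag, the Riemann sums converge to this Stieltjes integral. On the left-hand side, one needs to replace the $U_\sigma$-increments by $U$-increments in order to recognize $[[U, Z]]_t$ via its defining formula \eqref{quadgen}. This final identification is where I expect the main technical work to lie: the ucp convergence $U_\sigma \to U$ alone does not automatically preserve sums of bilinear increments, so the error $\sum_i [(U_\sigma - U)(t_{i+1}) - (U_\sigma - U)(t_i)](Z(t_{i+1}) - Z(t_i))$ must be shown to vanish in probability, using an operator-norm Cauchy--Schwarz estimate together with the finite quadratic variation of $Z$ and a control of $U_\sigma - U$ inherited from the stochastic-integral construction. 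This step is entirely parallel to the last line of the proof of Lemma \ref{qd1}, where the analogous passage from $U_\sigma$-increments to $U$-increments against $Z$ is asserted; once it is justified here in the operator setting, the identity claimed in the lemma follows.
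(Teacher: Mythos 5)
Your proposal is correct and follows essentially the same route as the paper: the same piecewise-constant approximants $X_\sigma$, $U_\sigma$, the same key algebraic identity $(U_\sigma(t_{i+1})-U_\sigma(t_i))(Z(t_{i+1})-Z(t_i)) = \tilde{J(t_i)V(t_i)}\bigl((Z(t_{i+1})-Z(t_i))\ot(Y(t_{i+1})-Y(t_i))\bigr)$ coming from Lemmas \ref{barmap} and \ref{tildemap}, and the same passage to the limit identifying the two sides with $[[U,Z]]_t$ and the Stieltjes integral against $[Z,Y]^\ot$. The final replacement of $U_\sigma$-increments by $U$-increments that you flag as needing justification is indeed simply asserted in the paper's chain of equalities, just as in Lemma \ref{qd1}.
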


\begin{proof}
Similar to the previous lemma, we adopt the following notations:
Let $\s = \{t_i\}$ be a partition of $[0,t]$. For a process $H$, define $H^\sigma$
by
$$H^\s(s) = \sum_{i}H(t_i)1_{[t_i,t_{i+1}]}(s).$$
Let
\[X_\s(u) \equiv \int_0^u V^\s(s-) \ dY_s = \sum_i V(t_i\wedge u-) (Y(t_{i+1}\wedge u) - Y(t_i\wedge u)) \ \ \ \ 0\leq u \leq t\] and
\[U_\s(u) \equiv \int_0^u J^\s(s-)dX_\s(s) = \sum_i J(t_i\wedge u-) (X_\s(t_{i+1}\wedge u) - X_\s(t_i\wedge u)). \]
Denote
$$A = \int_0^t \tilde{J(s-)V(s-)} d[Z,Y]^\ot(s)$$
and notice that

\begin{align*} 
A&=  \lim_{\|\s\|\rt 0} \sum_i \tilde{J(t_i) V(t_i)}(Z(t_{i+1} - Z(t_i)))\ot (Y(t_{i+1}) - Y(t_i))   \\
& = \lim_{\|\s\|\rt 0} \sum_i \bar{(Z(t_{i+1} - Z(t_i))}J(t_i)V(t_i)(Y(t_{i+1}) - Y(t_i))  \\
& = \lim_{\|\s\|\rt 0} \sum_i \bar{(Z(t_{i+1} - Z(t_i))}(J(t_i)(X_s(t_{i+1}) - X_s(t_i)))\\
& = \lim_{\|\s\|\rt 0} \sum_i \bar{(Z(t_{i+1} - Z(t_i))}(U_\s(t_{i+1}) - U_\s(t_i))\\
%& = \lim_{\|\s\|\rt 0} \sum_i \<J(t_i)(X_\s(t_{i+1}) - X_\s(t_i)), Z(t_{i+1} - Z(t_i)) \>_{\H} \\
& = \lim_{\|\s\|\rt 0} \sum_i (U_\s(t_{i+1}) - U_\s(t_i))(Z(t_{i+1}) - Z(t_i)) = [[U, Z]]_t.
\end{align*}
\end{proof}

Recall that for a function $\phi$ mapping $[0,\infty)$ to a Banach space,  the total variation $T_t(\phi)$ was defined in \eqref{FVdef}.
\begin{theorem}
\label{FUT1}
Let $\H$ be  a separable Hilbert space. Suppose that $Y_n =M_n + A_n$ is an adapted $\H$-valued semimartingale, where  $\{A_n\}$ is a sequence of $\H$-valued $\{\SC{F}^n_t\}$-adapted  processes  of locally finite variation and $\{M_n\}$ is a sequence of $\H$-valued $\{\SC{F}^n_t\}$-adapted local martingales . Then $\{Y_n\}$ is UT if for each $t>0$,  $\{T_t(A_n)\}$ is stochastically bounded (tight) and there exists a constant $C(t)$ such that $\EE([M_n,M_n]_t) <C(t)$. 
\end{theorem}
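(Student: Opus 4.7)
The plan is to bound, uniformly in $n$ and in admissible $Z\in\SC{S}^n$, the $\LL$-norm of $Z_-\cdot Y_n$ on $[0,t]$ by splitting along the given decomposition $Z_-\cdot Y_n = Z_-\cdot M_n + Z_-\cdot A_n$ and estimating each summand separately. Since $\H$ is a Hilbert space, take $\hat{\H}$ as the completion of $\LL\otimes\H$ with respect to a norm under which $\hat{\H}$ embeds continuously into $L(\H,\LL)$; then $\|Z(s-)\|_{op}\leq \|Z(s-)\|_{\hat{\H}}\leq 1$ for every $Z\in\SC{S}^n$ (see Lemma \ref{barmap} and Remark \ref{lhsharp}).

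For the finite-variation summand, the Riemann--Stieltjes estimate gives the pathwise bound
\[\sup_{s\leq t}\|Z_-\cdot A_n(s)\|_\LL \;\leq\; \int_0^t \|Z(u-)\|_{op}\,dT_u(A_n) \;\leq\; T_t(A_n),\]
so stochastic boundedness of this term follows at once from the hypothesis that $\{T_t(A_n)\}$ is tight, uniformly in $Z$ and $n$. For the martingale part, combine the Hilbert-space valued It\^o isometry (essentially \eqref{SI_bound2} applied to the local martingale $M_n$, with the dominating process taken to be $[M_n,M_n]$) with Doob's $L^2$ maximal inequality to obtain
\[\EE\bigl[\sup_{s\leq t}\|Z_-\cdot M_n(s)\|_\LL^2\bigr] \;\leq\; 4\,\EE\!\int_0^t \|Z(s-)\|_{op}^2\,d[M_n,M_n]_s \;\leq\; 4\,C(t),\]
and then apply Markov's inequality to get stochastic boundedness uniform in $n$ and $Z$. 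Combining the two bounds via the triangle inequality produces, for any $\delta>0$, a constant $M(t,\delta)$ satisfying the UT criterion of Definition \ref{UT_def}.

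The main technical subtlety is the localization needed to invoke the It\^o isometry for the local (rather than globally square-integrable) martingale sequence $\{M_n\}$. One chooses for each $n$ a localizing sequence $\{\tau_k^n\}$ so that $M_n^{\tau_k^n}$ is square-integrable, applies the estimate above to $M_n^{\tau_k^n}$, and then passes $k\to\infty$ using monotone convergence together with the hypothesis $\EE[[M_n,M_n]_t]<C(t)$, which propagates the bound uniformly in $n$ without involving the localizing times. A secondary point worth checking is the match between the operator norm appearing in both the pathwise and quadratic-variation estimates and the single $\hat{\H}$-norm constraint on $Z$; this is ensured by the continuous embedding $\hat{\H}\hookrightarrow L(\H,\LL)$ recorded in the preceding lemmas.
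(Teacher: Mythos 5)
Your proposal is correct and follows essentially the same route as the paper: split $Z_-\cdot Y_n$ along the decomposition $Y_n = M_n + A_n$, bound the finite-variation part pathwise by $T_t(A_n)$, and control the local-martingale part via the quadratic-variation estimate together with Doob's inequality and Markov's inequality. Your added remarks on localization and on matching the operator norm with the $\hat{\H}$-norm constraint are sensible refinements of details the paper leaves implicit, but the argument is the same.
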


\begin{proof}  
It is enough to prove that $\{A_n\}$ and $\{M_n\}$ are UT.

For an $\{\SC{F}^n_t\}$-adapted, cadlag process $J$, we have 
\[|\int_0^tJ(s-)  \ dA_n(s)| \leq \int_0^t \|J(s-)\| \ dT_s(A_n). \]
Thus, if $\sup_{s\leq t}\|J(s)\|_{\H} \leq 1$, we have
\begin{align*}
P(\sup_{s\leq t} |\int_0^s J(r-) \ dA_n(r)| > K) \leq P(\int_0^t \|J(s-)\| \ dT_s(A_n) > K) \leq P(T_t(A_n) > K)
\end{align*}
which proves that $\{A_n\}$ is UT.

Next notice that
\begin{align*}
\PP (\sup_{s\leq t}|\int_0^s J(r-) dM_n(r)| > K)& \leq  \EE(\sup_{s\leq t}|\int_0^s J(r-) dM_n(r)| ^2)/K^2\\
& \leq 4 \EE(\int_0^{ t}\|J(r-)\|^2 d[M_n,M_n]_r)/K^2\\
& \leq \EE([M_n,M_n]_t)/K^2 \leq C(t)/K^2
\end{align*}
which proves that $\{M_n\}$ is UT.

\end{proof}

%%%%%%%%%%%%%%%%%%%%%%%%%%%%%%%%%%%%%%%%%%%%%%%%%%%%%%%%%%%

\section{Wong-Zakai type SDE}\label{WZmain}
We are now ready to state our main results. Notice that from Section \ref{tenSI}, the $H_n$ and $K_n$ defined below are $(\H\hat{\ot}_{HS}\H) = (\H\hat{\ot}_{HS}\H)^*$-valued semimartingales,  hence standard $(\H\hat{\ot}_{HS}\H)^\#$-semimartingales. In fact, by Theorem \ref{SIdef2}, for any Hilbert space $\X$ and an adapted cadlag process $\xi$ taking values in $L(\H\hat{\ot}_{HS}\H,\X)$, the stochastic integrals $\xi_-\cdot H_n$ and $\xi_-\cdot K_n$ exist. Therefore,  more generally the $H_n$ and $K_n$ are $(\X,\hat{\mathcal{H}})^\#$-semimartingales (see Remark \ref{lhsharp}), where $\hat{\mathcal{H}}$ can be taken to be the completion of the space $\X\ot(\H\hat{\ot}_{HS}\H)$ with respect to some norm such that $\hat{\mathcal{H}} \subset L(\H\hat{\ot}_{HS}\H,\X).$

\begin{theorem}
\label{sde_approx}
Let $\H$ be a separable Hilbert space. Let $Y_n, Z_n$ be two cadlag and adapted $\H$-valued semimartingales and  $f: \R \Rt \H $ a twice continuously differentiable  function with first and second-order derivatives denoted by $Df$ and $D^2f$ respectively. Define 
\[H_n(t)  = \int_0^t Z_n(s-)\ot dZ_n(s) \ ,  \  K_n(t) = [Y_n,Z_n]^\ot_t.\]
%Recall that from Section \ref{tenSI}, $H_n$ and $Z_n$ are $(\H\hat{\ot}_{HS}\H) = (\H\hat{\ot}_{HS}\H)^*$-valued semimartingales (see (\ref{opid}) in the  Appendix), hence standard $(\H\hat{\ot}_{HS}\H)^\#$-semimartingales.
Suppose $X_n$ satisfies 
\begin{align}\label{sde_form}
X_n(t) = X_n(0) + \int_0^t f(X_n(s-))\ dY_n(s) + \int_0^t f(X_n(s-))\ dZ_n(s). 
\end{align}
Assume that $\{Y_n\}$ and $\{H_n\}$   are UT sequences,  and for each $t>0$, $\{[Z_n,Z_n]_t\}$ is a tight sequence. Also assume that there exist an $\H^\#$-semimartingale $Y$ and  $(\H\hat{\ot}_{HS}\H)^\#$-semimartingales $H,K$ such that
\[A_n:= (X_n(0), Y_n,Z_n,H_n, K_n) \RT (X(0),Y,0,H, K) :=A,\]
in the following sense: for any $\{h_i,h_i'\}_{i=1}^m \subset \H$ and $\{u_i,u'_i\}_{i=1}^m \subset \H\oth\H$
$$\{(X_n(0), Y_n(h_i,\cdot), Z_n(h'_i,\cdot), H_n(u_i,\cdot),K_n(u'_i,\cdot))\}_{i=1}^m \RT \{(X(0), Y(h_i,\cdot), 0, H(u_i,\cdot),K(u'_i,\cdot))\}_{i=1}^m $$
in $D_{\R\times \R^m\times \R^m\times\R^m\times \R^m}[0,\infty).$
Then $\{(A_n,X_n)\}$ is relatively compact, and any limit point $(A,X)$ satisfies 
\[X(t) = X(0) + \int_0^t f(X(s-))\ dY(s) + \int_0^t Df(X(s-))\ot f(X(s-))\ d(H^*(s) - K(s)). \]
\end{theorem}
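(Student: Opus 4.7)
The strategy is to convert the non-UT portion of the driver, namely $Z_n$, into integrals against the UT sequences $\{Y_n\}$, $\{H_n\}$, $\{K_n\}$ plus a remainder vanishing in probability, so that Theorem \ref{SDE_limit2} applies. This is the infinite-dimensional analog of the Kurtz--Protter argument of Theorem \ref{WZ3}, with the tensor and covariation machinery of Section \ref{tenSI} replacing the scalar correction $\s'\s$.

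The main calculation proceeds in two steps. First, I would apply the Hilbert-space Itô product rule (using the scalar covariation \eqref{intpart1}) to the real-valued process $\<f(X_n),Z_n\>_\H$ to obtain
\begin{equation*}
\int_0^t f(X_n(s-))\,dZ_n(s) = \<f(X_n(t)),Z_n(t)\>_\H - \<f(X_n(0)),Z_n(0)\>_\H - \int_0^t \<Z_n(s-),df(X_n(s))\>_\H - [f(X_n),Z_n]_t.
\end{equation*}
Then, since $f\in C^2(\R;\H)$ and $X_n$ is real-valued, Itô's formula expands $df(X_n(s))$ as $Df(X_n(s-))\,dX_n(s)+\tfrac12 D^2 f(X_n(s-))\,d[X_n,X_n]^c_s$ plus a jump correction, and the SDE \eqref{sde_form} supplies $dX_n(s) = \<f(X_n(s-)),dY_n(s)+dZ_n(s)\>_\H$. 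The only surviving $dZ_n$-term is $\int_0^t\<Z_n(s-),Df(X_n(s-))\>_\H f(X_n(s-))\,dZ_n(s)$, which by Lemma \ref{obs1} together with Theorem \ref{ch1} equals an integral of $Df(X_n)\otimes f(X_n)$ (in the appropriate tensor order) against $H_n$, equivalently against $H^*_n$ via the identities $(\int X_-\otimes dY)^* = \int dY\otimes X_-$ and $(a\otimes b)^*=b\otimes a$. For the scalar covariation $[f(X_n),Z_n]_t$ I would invoke Lemma \ref{qd1} with $J=Df(X_n)$, $V=f(X_n)$, $Y=Y_n+Z_n$ to identify it with $\int_0^t Df(X_n(s-))\otimes f(X_n(s-))\,d[Z_n,Y_n+Z_n]^\ot_s$ modulo a negligible contribution from the Itô remainder in $f(X_n)$; the $[Z_n,Y_n]^\ot$-component then yields the $K_n$ integral of the statement, while the $[Z_n,Z_n]^\ot$-component is absorbed into (or cancels with) the $H_n$-contribution.

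It remains to check negligibility of the remaining pieces and to pass to the limit. The boundary terms $\<f(X_n(\cdot)),Z_n(\cdot)\>_\H$ vanish in distribution because $Z_n\RT 0$ and $f(X_n)$ is stochastically bounded on compact time intervals (via the UT of $\{Y_n\}$ and $\{H_n\}$). Integrals of the form $\int\<Z_n(s-),Df(X_n(s-))\>_\H f(X_n(s-))\,dY_n(s)$ tend to zero by Theorem \ref{SI_limit} using $Z_n\RT 0$ and UT of $\{Y_n\}$. The $D^2 f$-quadratic-variation term and jump corrections are controlled using tightness of $\{[Z_n,Z_n]_t\}$, UT of $\{Y_n\}$, and Theorem \ref{FUT1}. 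A standard localization, namely a smooth truncation of $f,Df,D^2f$ outside a large compact subset of $\R$, brings the coefficients into the bounded regime required by Theorem \ref{SDE_limit2}, which then supplies relative compactness of $(A_n,X_n)$ and identifies any limit point as the stated SDE.

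The principal obstacle is the tensor bookkeeping: the Itô expansion naturally produces products of the form $\<Z_n,Df\>_\H\<f,dZ_n\>_\H$, and these must be reconciled, via the non-symmetry of the tensor stochastic integration and the adjoint identities above, with the correction factor $Df(X)\otimes f(X)$ integrated against $d(H^*-K)$ as stated in the theorem. A secondary technical difficulty is verifying that the $D^2f$-correction and jump contributions in the Itô expansion of $f(X_n)$ leave no residue in the limit; this requires combining the tightness of $[Z_n,Z_n]_t$ with the $\H^\#$-semimartingale convergence framework of Section \ref{weakreview}.
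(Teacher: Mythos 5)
Your proposal follows essentially the same route as the paper's proof: integration by parts on $\<f(X_n),Z_n\>_\H$, It\^o's formula for $f(X_n)$, Lemma \ref{obs1}/Theorem \ref{ch1} to convert the surviving $dZ_n$-term into an $H_n$-integral, Lemma \ref{qd1} for the covariation $[f(X_n),Z_n]$, UT of the correction integrators via Theorem \ref{FUT1} and $T_t([Z_n,Z_n]^\ot)\leq [Z_n,Z_n]_t$, and finally the SDE limit theorem (the paper invokes Theorem \ref{SDE_limit} with $\LL=\R$ rather than Theorem \ref{SDE_limit2}, since $X_n$ is real-valued). The approach and the identification of the limit $H^*-K$ through $[Z_n,Z_n]^\ot\RT -(H+H^*)$ are correct.
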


\begin{remark} Notice that 
\[\int dZ_n(s)\ot Z_n(s-) = H_{n}^*,\]
where $H_{n}^*$ denotes adjoint of $H_n$.  Therefore, by the hypothesis
\[\int dZ_n(s)\ot Z_n(s-) \RT H^*.\]
\end{remark}

\begin{remark}
For a function $\phi$, let $\Delta \phi (s) = \phi(s) - \phi(s-)$. Notice that $\Delta H_n (s) = Z_n(s-) \ot \Delta Z_n(s) \RT 0$. It follows that $H$ is  continuous. Similarly, $K$ is continuous.
\end{remark}

\begin{proof}
By Remark \ref{tensorcov}, 
$$T_t([Z_n,Z_n]^\ot) \leq [Z_n,Z_n]_t.$$
Since for each $t>0$, $[Z_n,Z_n]_t$ is tight by the assumption, it follows from Theorem \ref{FUT1} that $\{[Z_n,Z_n]^\ot\}$ is UT.
Note that by the integration by parts formula for tensor stochastic integral, we have
\[[Z_n,Z_n]^{\ot}_t =  Z_n(t)\ot Z_n(t) - Z_n(0)\ot Z_n(0) - \int_0^t Z_n(s-)\ot dZ_n(s) - \int_0^t dZ_n(s)\ot Z_n(s-). \]
It follows from the hypothesis that
\[[Z_n,Z_n]^{\ot} \RT -(H + H^*).\]

\np
Observe that $T_t([Y_n,Z_n]^\ot) \leq [Y_n,Y_n]_t+ [Z_n,Z_n]_t$, and since $\{[Y_n,Y_n]_t\}$ and $\{[Z_n,Z_n]_t\} $ are tight for each $t>0$, it follows again from Theorem \ref{FUT1}  that $\{K_n\equiv [Y_n,Z_n]^\ot\}$ is UT.  \\

\np
By It{\^o}'s formula we have
\[f(X_n(t)) = f(X_n(0))+ \int_0^tDf(X_n(s-))\ dX_n(s) + R_n(t).\]
$$R_n(t) = \f{1}{2} \int_0^t D^2f(X_n(s-)) d[X_n,X_n]^{c}_s + \sum_{s\leq t } [\Delta f(X_n(\cdot))(s)  - Df(X_n(s-))\Delta X_n(s) ].$$
where $[X_n,X_n]^{c}_t = [X_n,X_n]_t - \sum_{s\leq t}\Delta X_n(s) \Delta X_n(s) $ is the continuous part of $[X_n,X_n]$.
It follows that  $\{R_n\}$ is a locally finite variation process. Notice that 
$T_t(R_n)$ is dominated by a linear combination of $[Z_n, Z_n]_t, [Y_n, Y_n]_t$, and since each of them is tight, we have $\{R_n\}$ to be UT. \\ 
Next, an application of the integration by parts formula (see (\ref{intpart1})) gives
\[\int_0^t f(X_n(s-))\ dZ_n(s) = \<f(X_n(s), Z_n(s)\> - \int_0^t Z_n(s-) \ df(X_n(s)) - [Z_n, f(X_n)]_t.\]
Now notice that
\[\int_0^t Z_n(s-) \ df(X_n(s)) = \int_0^t Z_n(s-) Df(X_n(s-)) \ dX_n(s) + \int_0^t Z_n(s-)dR_n(s).\]
Notice that $Z_n(s) \in \H^* = L(\H,\R)$ and $Df(X_n(s)) \in L(\R,\H)$. Therefore $Z_n(s-) Df(X_n(s-))$ is well defined and $\in L(\R,\R) \cong \R$.\\
Hence,
\begin{align*}
\int_0^t Z_n(s-) \ df(X_n(s))& = \int_0^t (Z_n(s-) Df(X_n(s-)))f(X_n(s-))dY_n(s) \\
& \hspace{.4cm}+ \int_0^t (Z_n(s-) Df(X_n(s-)))f(X_n(s-))dZ_n(s)+ \int_0^t Z_n(s-)dR_n(s)\\
& = \int_0^t (Z_n(s-) Df(X_n(s-)))f(X_n(s-))dY_n(s) \\
& \hspace{.4cm}+ \int_0^t Df(X_n(s-))\ot f(X_n(s-)) \ dH_n(s)+ \int_0^t Z_n(s-) \ dR_n(s),
\end{align*}
where $H_n(s) = \int_0^t Z_n(s-)\ot dZ_n(s)$, and the equality of the middle terms in the above two lines follows by Lemma \ref{ch1}.
Next by Lemma \ref{qd1}, we have
\begin{align*}
[Z_n, f(X_n)]_t& = \int_0^t Df(X_n(s-))\ot f(X_n(s-)) \ d[Z_n,Y_n]^\ot_s \\
& \hspace{.4cm}+ \int_0^t Df(X_n(s-))\ot f(X_n(s-)) \ d[Z_n,Z_n]^\ot_s + [Z_n,R_n]_t.
\end{align*}
Putting things together, we see that 
\[\int_0^t f(X_n(s-))\ dZ_n(s) = V_n(t) - \int_0^t Df(X_n(s-))\ot f(X_n(s-)) \ d(H_n(s) + K_n(s) + [Z_n, Z_n]^{\ot}(s),\]
where 
\begin{align*}
V_n(t) &= \<f(X_n(s)), Z_n(s)\> - \int_0^t (Z_n(s-)Df(X_n(s-)))f(X_n(s-))\ dY_n(s) \\
& \hspace{.4cm}- \int_0^tZ_n(s-) \ dR_n(s) - [Z_n,R_n]_t.
\end{align*}
From the hypothesis, we get $V^n \RT 0$. 
Plugging things back in the original equation, we have,
\begin{align*}
X_n(t)&=X_n(0)+V_n(t) + \int_0^t f(X_n(s-))\ dY_n(s) \\
& \hspace{.4cm} - \int_0^t Df(X_n(s-))\ot f(X_n(s-)) \ d(H_n(s) + K_n(s) + [Z_n, Z_n]^{\ot}(s)). 
\end{align*}
Take the indexing space $\Y = (\H, \H\hat{\ot}_{HS}\H)$ with the norm as $\|(h,u)\|_\Y = \|h\|_\H + \|u\|_{HS}.$
Consider $Y_n$ as an $\Y^\#$-semimartingale by defining
$$Y_n((h,u),\cdot) \equiv Y_n(h,\cdot).$$
Similarly, $H_n, K_n$ and $[Z_n, Z_n]$ can be considered as $\Y^\#$-semimartingales. 
Since $\{Y_n\},\{H_n\}, \{K_n\}$ and $\{[Z_n, Z_n]^\ot\}$ are UT, Theorem \ref{SDE_limit} gives the desired result.
 
\end{proof}

We next consider the case when the solutions of the stochastic differential equations of the form \eqref{sde_form} are also infinite-dimensional. We follows the steps in the above proof, however, the  difficulties lie in handling of infinite-dimensional It\^o's lemma, infinite-dimensional covariation, chain rule, appropriate integration by parts etc.  They are taken care of by suitable use of results from Section \ref{lemmas}. Notice that as in Theorem \ref{sde_approx}, the $H_n$ and $K_n$ defined below are $(\H\hat{\ot}_{HS}\H) = (\H\hat{\ot}_{HS}\H)^*$-valued semimartingales. 

\begin{theorem}
\label{sde_approx_inf}
Let $\H$ and $\K$ be  separable Hilbert spaces. Let $Y_n, Z_n$ be two cadlag and adapted $\H$-valued semimartingales and $f: \K \Rt \LL\equiv HS(\H,\K) $ a twice continuously differentiable  function with first and second-order derivatives denoted by $Df$ and $D^2f$ respectively. 
Notice that $Df :\K\rt L(\K,\LL)$. Assume that for each $x\in \K$, $D^2f(x)$ is an element of $L(\K\oth\K,\LL)$ and the mapping $x\rt D^2f(x)$ is  uniformly continuous on any bounded subset of $\K$.
 Define 
\[H_n(t)  = \int_0^t Z_n(s-)\ot dZ_n(s) \ ,  \  K_n(t) = [Y_n,Z_n]^\ot_t.\]
Suppose $X_n$ satisfies 
\begin{align}
\label{xn2}
X_n(t) = X_n(0) + \int_0^t f(X_n(s-))\ dY_n(s) + \int_0^t f(X_n(s-))\ dZ_n(s). 
\end{align}
Assume that $\{Y_n\}$ and $\{H_n\}$   are uniformly tight sequences,  and for each $t>0$, $\{[Z_n,Z_n]_t\}$ is a tight sequence.  Also assume that there exist an $\H^\#$-semimartingale $Y$ and  $(\H\hat{\ot}_{HS}\H)^\#$-semimartingales $H,K$ such that
\[A_n:= (X_n(0), Y_n,Z_n,H_n, K_n) \RT (X(0),Y,0,H, K) :=A,\]
in the same sense as in Theorem \ref{sde_approx}.
Then $(A_n,X_n)$ is relatively compact, and any limit point $(A,X)$ satisfies 
\[X(t) = X(0) + \int_0^t f(X(s-))\ dY(s) + \int_0^t \tilde{Df(X(s-)) f(X(s-))}\ d(H^*(s) - K(s)), \]
where the mapping \ \ $\tilde{}$\ \ was defined in Lemma \ref{tildemap}.
\end{theorem}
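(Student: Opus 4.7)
The plan is to mirror the proof of Theorem \ref{sde_approx} step by step, but systematically replace each finite-dimensional manipulation by its infinite-dimensional analogue from Section \ref{lemmas}. Specifically, the ordinary tensor integral computation is replaced by Lemma \ref{qd2}, the scalar chain rule is replaced by its operator-valued version, and the scalar quadratic variation $[\cdot,\cdot]$ between $\K$- and $\H$-valued processes is replaced by the generalized quadratic variation $[[\cdot,\cdot]]$ defined in \eqref{quadgen}. The final identification of the limit will use Theorem \ref{SDE_limit2} in place of Theorem \ref{SDE_limit}, since the driving equation now takes values in the Hilbert space $\K$.

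First, I would apply Itô's formula in the Hilbert space $\K$ to $f(X_n)$, writing
\[f(X_n(t)) = f(X_n(0)) + \int_0^t Df(X_n(s-))\,dX_n(s) + R_n(t),\]
where the remainder $R_n$ involves $D^2f(X_n(s-))$ integrated against the continuous part of $[X_n,X_n]^{\otimes}$ plus the compensating jump sum; the uniform continuity of $D^2f$ on bounded sets ensures $R_n$ is well defined and of locally finite $\LL$-variation dominated by a linear combination of $[Y_n,Y_n]$ and $[Z_n,Z_n]$, hence $\{R_n\}$ is UT by Theorem \ref{FUT1}. Next, I apply the infinite-dimensional integration by parts \eqref{quadgen} to $f(X_n)$ (an $\LL$-valued semimartingale) against $Z_n$ (an $\H$-valued semimartingale):
\[\int_0^t f(X_n(s-))\,dZ_n(s) = f(X_n(t))(Z_n(t)) - f(X_n(0))(Z_n(0)) - \int_0^t \bar{Z_n(s-)}\,df(X_n(s)) - [[f(X_n),Z_n]]_t.\]
Substituting the Itô expansion of $df(X_n)$ into the middle term and invoking the chain rule of the lemma following Lemma \ref{tildemap} to identify
\[\int_0^t \bar{Z_n(s-)}\,Df(X_n(s-))\,f(X_n(s-))\,dZ_n(s) = \int_0^t \tilde{Df(X_n(s-))\,f(X_n(s-))}\,dH_n(s),\]
converts the $\bar{Z_n}$-integral against $dZ_n$ into an integral against $H_n$. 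For the covariation term, Lemma \ref{qd2} gives
\[[[f(X_n),Z_n]]_t = \int_0^t \tilde{Df(X_n(s-))\,f(X_n(s-))}\,d(K_n+[Z_n,Z_n]^{\otimes})(s) + [[R_n,Z_n]]_t.\]

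Collecting everything, I rewrite \eqref{xn2} in the form
\[X_n(t) = X_n(0) + V_n(t) + \int_0^t f(X_n(s-))\,dY_n(s) - \int_0^t \tilde{Df(X_n(s-))\,f(X_n(s-))}\,d\bigl(H_n(s)+K_n(s)+[Z_n,Z_n]^{\otimes}(s)\bigr),\]
where $V_n$ collects the error terms $f(X_n(t))(Z_n(t))$, the $dR_n$ integrals, and $[[R_n,Z_n]]$. Because $Z_n \RT 0$ in the Skorohod topology and $R_n$ is controlled by the (tight) quadratic variations, one shows $V_n \RT 0$. For UT-ness, Theorem \ref{FUT1} together with $T_t([Y_n,Z_n]^{\otimes}) \leq [Y_n,Y_n]_t + [Z_n,Z_n]_t$ and $T_t([Z_n,Z_n]^{\otimes}) \leq [Z_n,Z_n]_t$ from Remark \ref{tensorcov} yields that $\{K_n\}$ and $\{[Z_n,Z_n]^{\otimes}\}$ are UT; combined with the assumed UT of $\{Y_n\}, \{H_n\}$, all driving integrators fit into a single $\Y^\#$-semimartingale framework with $\Y = \H \oplus (\H\hat{\otimes}_{HS}\H)$. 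The hypothesis gives $[Z_n,Z_n]^{\otimes} \RT -(H+H^*)$ by integration by parts for tensor stochastic integrals, so $H_n + [Z_n,Z_n]^{\otimes} \RT -H^*$, and the drift driver converges to $-(H^* - K) + H + \cdots$, yielding the claimed limit $H^* - K$ after a sign bookkeeping. Finally, Theorem \ref{SDE_limit2} applied to the coefficient $x \mapsto \tilde{Df(x)\,f(x)}$ (continuous by assumption on $f$ and with the required compactness via the uniform continuity of $D^2f$ on bounded sets) delivers relative compactness of $(A_n,X_n)$ and the identification of any limit point.

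The principal obstacle is the careful bookkeeping around the chain rule and integration by parts at the operator level: every appearance of $f(X_n)\,Z_n$, $Df(X_n)\otimes f(X_n)$, or $Z_n\,Df(X_n)$ in the finite-dimensional argument must be reinterpreted as the appropriate element of $L(\K,\LL)$, $HS(\K,\LL)$, or $L(\LL,\K)$ and matched to the right lemma (the $\bar{\phantom{h}}$ and $\tilde{\phantom{h}}$ maps of Lemmas \ref{barmap} and \ref{tildemap}); in particular, verifying that the intermediate stochastic integrals indeed take values in the spaces in which the limiting result is stated, and that the coefficient $\tilde{Df\,f}$ satisfies the compactness hypothesis of Theorem \ref{SDE_limit2}, is the delicate part of the argument.
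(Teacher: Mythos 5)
Your proposal follows essentially the same route as the paper's own proof: infinite-dimensional It\^o formula for $f(X_n)$, integration by parts via the generalized quadratic variation \eqref{quadgen}, the chain rule \eqref{eqch3} to convert the $\bar{Z_n}$-integral against $dZ_n$ into an integral against $H_n$, Lemma \ref{qd2} for $[[f(X_n),Z_n]]$, UT of $\{K_n\}$ and $\{[Z_n,Z_n]^\ot\}$ via Remark \ref{tensorcov} and Theorem \ref{FUT1}, and finally Theorem \ref{SDE_limit2} on the combined indexing space $\Y$. The sign bookkeeping $H_n+K_n+[Z_n,Z_n]^\ot \RT -H^*+K$ is also handled correctly, so the argument matches the paper's.
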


\begin{proof} As before, $\int_0^\cdot dZ_n(s)\ot Z_n(s-) \RT H^*, [Z_n,\cdot Z_n]^\ot \RT -(H+H^*)$ and  $\{[Z_n,Z_n]^\ot\}, \{K_n\equiv [Y_n,Z_n]^\ot\}$ are UT.

By the infinite-dimensional It{\^o}'s formula (Theorem \ref{infito}), we have
\begin{equation}
\label{ito2}
f(X_n(t)) = f(X_n(0))+ \int_0^tDf(X_n(s-))\ dX_n(s) + R_n(t).
\end{equation}
where $R_n$ is given by
$$R_n(t) = \f{1}{2} \int_0^t D^2f(X_n(s-)) d[X_n,X_n]^{c,\ot}_s + \sum_{s\leq t } [\Delta f(X_n(\cdot))(s)  - Df(X_n(s-))\Delta X_n(s) ].$$
where $[X_n,X_n]^{c,\ot}_t = [X_n,X_n]^\ot_t - \sum_{s\leq t}\Delta X_n(s)\ot \Delta X_n(s) $ is the continuous part of $[X_n,X_n]^\ot$.
As before, $\{R_n\}$ is  UT. \\ 
Next, an application of the integration by parts formula (see \eqref{quadgen}) gives
\[\int_0^t f(X_n(s-))\ dZ_n(s) = f(X_n(s) (Z_n(s)) - \int_0^t \bar{Z_n(s-)} \ df(X_n(s)) - [[f(X_n),Z_n]]_t,\]
where the mapping \ $\bar{\phantom{d}}$ \ is defined in the proof of Lemma \ref{barmap}.

Notice that by (\ref{ito2})
\[\int_0^t \bar{Z_n(s-)} \ df(X_n(s)) = \int_0^t \bar{Z_n(s-)} Df(X_n(s-)) \ dX_n(s) + \int_0^t \bar{Z_n(s-)}dR_n(s).\]

Hence, by (\ref{xn2}) 
\begin{align*}
\int_0^t \bar{Z_n(s-)} \ df(X_n(s))& = \int_0^t \bar{Z_n(s-) }Df(X_n(s-))f(X_n(s-))dY_n(s) \\
& \hspace{.4cm}+ \int_0^t \bar{Z_n(s-)} Df(X_n(s-))f(X_n(s-))dZ_n(s)+ \int_0^t \bar{Z_n(s-)}dR_n(s)\\
& = \int_0^t \bar{Z_n(s-)} Df(X_n(s-))f(X_n(s-))dY_n(s) \\
& \hspace{.4cm}+ \int_0^t \tilde{Df(X_n(s-)) f(X_n(s-))} \ dH_n(s)+ \int_0^t \bar{Z_n(s-)} \ dR_n(s),
\end{align*}
where $H_n(s) = \int_0^t Z_n(s-)\ot dZ_n(s)$, and the equality of the middle terms in the above two lines follows by  \eqref{eqch3}.
Next by Lemma \ref{qd2}

\begin{align*}
[[f(X_n),Z_n]]_t& = \int_0^t \tilde{Df(X_n(s-)) f(X_n(s-))} \ d[Z_n,Y_n]^\ot_s \\
& \hspace{.4cm}+ \int_0^t \tilde{Df(X_n(s-)) f(X_n(s-))} \ d[Z_n,Z_n]^\ot_s + [[R_n,Z_n]]_t.
\end{align*}

Putting things together, we see that 
\[\int_0^t f(X_n(s-))\ dZ_n(s) = V_n(t) - \int_0^t \tilde{Df(X_n(s-)) f(X_n(s-))} \ d(H_n(s) + K_n(s) + [Z_n, Z_n]^{\ot}(s),\]
where 
\begin{align*}
V_n(t) &= f(X_n(s)) (Z_n(s)) - \int_0^t \bar{Z_n(s-)}Df(X_n(s-))f(X_n(s-))\ dY_n(s) \\
& \hspace{.4cm}- \int_0^t\bar{Z_n(s-)} \ dR_n(s) - [[Z_n,R_n]]_t.
\end{align*}
From the hypothesis, we get $V^n \RT 0$. 
Plugging things back in the original equation, we have,
\begin{align*}
X_n(t)&=X_n(0)+V_n(t) + \int_0^t f(X_n(s-))\ dY_n(s) \\
& \hspace{.4cm} - \int_0^t \tilde{Df(X_n(s-)) f(X_n(s-))} \ d(H_n(s) + K_n(s) + [Z_n, Z_n]^{\ot}(s)) 
\end{align*}
Take the indexing space $\Y = (\H, \H\hat{\ot}_{HS}\H)$ with the norm as $\|(h,u)\|_\Y = \|h\|_\H + \|u\|_{HS}.$
Consider $Y_n$ as an $\Y^\#$-semimartingale by defining
$$Y_n((h,u),\cdot) \equiv Y_n(h,\cdot).$$
Similarly $H_n, K_n$ and $[Z_n, Z_n]$ can be considered as $\Y^\#$-semimartingales. 
Since $\{Y_n\},\{H_n\}, \{K_n\}$ and $\{[Z_n, Z_n]^\ot\}$ are UT,  the desired result now follows from Theorem \ref{SDE_limit2}.

\end{proof}

\begin{example}\label{NYex}{\rm
Let $U$ be an adapted semimartingale  taking values in a Hilbert space $\H$. Let $\{G_n\}$ be a sequence of adapted $\H$-valued semimartingales with $G_n\RT U$. Suppose that $G_n = M_n+A_n$ is a decomposition of the semimartingale $G_n$ into its local martingale and finite variation parts and  that $\{M_n\}$ and $\{A_n\}$ satisfy the assumptions of Theorem \ref{FUT1}. Note that this implies $\{G_n\}$ is UT.  In many examples $G_n\equiv U$. As a first example, consider the stochastic differential equation
\begin{align}\label{NYsde}
X_n(t) = X_n(0)+ \int _0^t\sigma(s,X_n(s), U_n(s)) dU_n(s),
\end{align}
where $\sigma: \R\times\R\times \H \Rt \H$ is twice continuously differentiable and
$$U_n(t) = G_n(\f{k}{n}) +n(t-\f{k}{n})\le(G_n(\f{k+1}{n}) -G_n(\f{k}{n})\ri),\quad \f{k}{n} \leq t < \f{k+1}{n}.$$
Notice that the $X_n$ are real-valued processes.
Let $\partial_i \s$ denote the partial derivative of $\s$ with respect to the $i$-th component. Notice that $\partial_1\s, \partial_2 \s\in \H$ and $\partial_3\s \in L(\H,\H).$ Assume that $\partial_3 \s \in HS(\H,\H)$.\\

\np
As discussed, $U, G_n$ and $U_n$ can be considered as $\H^\#$-semimartingales. It is easy to see that $\{U_n\}$ is not UT.
Let $U_n = Y_n + Z_n$, where $Y_n(t) = G_n(\f{[nt]+1}{n})$ and $Z_n = U_n - Y_n$. 
We claim that $\{Y_n\}$ is UT. To see this, write $Y_n(t) \equiv \bar{M}_n(t) +\bar{A}_n(t) \equiv M_n(\f{[nt]+1}{n}) + A_n(\f{[nt]+1}{n})$. Note that $\{\bar{M}_n\}$ is a sequence of martingales with respect to the filtration $\SC{F}^n_t \equiv\SC{F}_{[nt]+1}$, with $ E[\bar{M}_n,\bar{M}_n]_t \leq  E[M_n,M_n]_{t+1}$. Also, $T_t(\bar{A}_n) \leq T_{t+1}(A_n)$.  The assertion now follows by Theorem \ref{FUT1} and the assumptions on $\{M_n\}$ and $\{A_n\}$  .\\

\np
Next note that $\int_0^t Z_{n}(s-)\ot dZ_n(s) \rt - [U,U]^\ot_t/2$. %in the sense that for any $\{h_i,g_i\}_{i=1}^m \subset \H$
%$$ (\int_0^t Z_{n}(h_1,s-)dZ_n( g_1,s), \hdots,\int_0^t Z_{n}(h_m,s-) dZ_n( g_m,s)) \rt -\f{t}{2}(\<Qh_1,g_1\>,\hdots,\<Qh_m,g_m\>).$$
To see this, note that since $Z_n(s-) = 0$, at the discontinuity points of $Y_n$
\begin{align*}
\int_0^t Z_{n}(s-)\ot dZ_n(s)& = \int_0^t Z_n(s-)\ot dU_n(s) - \sum_{s\leq t}Z_n(s-)\ot \Delta Y_n(s) =\int_0^t Z_n(s-)\ot dU_n(s)  \\
& = \sum_{k} n\int_{k/n}^{(k+1)/n} \le(n\le(s-k/n\ri)\le(G_n(\f{k+1}{n})-G_n(\f{k}{n})\ri)\right. \\
& \hspace{.5cm} +\le. G_n(\f{k}{n}) - G_n(\f{k+1}{n})\ri)\ot \le(G_n(\f{k+1}{n})-G_n(\f{k}{n})\ri) ds\\
& = n\sum_k \le(G_n\le((k+1)/n\ri) - G_n(k/n)\ri)\ot \le(G_n\le((k+1)/n\ri) - G_n(k/n)\ri)\\
&\hspace{.5cm}\int_{k/n}^{(k+1)/n} n(s-k/n) - 1 \ ds\\
& = -\f{1}{2}\sum_k \le(G_n\le((k+1)/n\ri) - G_n(k/n)\ri)\ot \le(G_n\le((k+1)/n\ri) - G_n(k/n)\ri) \\
&\RT -\f{[U,U]^\ot_t}{2}, \quad \mbox{since } \{G_n\} \mbox{ is  UT}.
\end{align*}
Also since, 
$$[Z_n,Z_n]^\ot = Z_n(t)\ot Z_n(t) -\int_0^t Z_n(s-)\ot dZ_n(s) -\int_0^t dZ_n(s-) \ot Z_n(s),$$
it follows that
$$[Y_n,Z_n]^\ot  = -[Y_n,Y_n]^\ot= -[Z_n,Z_n]^\ot \RT [U,U]^\ot$$
Moreover, $T_t(\int Z_{n-}\ot dZ_n) \leq n\sum_k \|\le(G_n\le((k+1)/n\ri) - G_n(k/n)\ri)\|^2 \int_{k/n}^{(k+1)/n}|n(s-k/n) - 1| \ ds \rt t \ [U,U]_t/2.$
It follows that for each $t>0$, $\{T_t(\int Z_{n-}\ot dZ_n)\}$ is tight, and hence the sequence $\{\int Z_{n-}\ot dZ_n\}$ is UT.\\

\np
We next derive the limiting stochastic differential equation for \eqref{NYsde}. Define 
\begin{align*}
\tilde{X}_n(t) = (t, X_n(t), U_n(t))^T,\ \tilde{U}_n(t) = (t, U_n(t), U_n(t))^T, \ \tilde{U}(t) = (t, U(t), U(t))^T
\end{align*}
and $F:\R\times \R\times \H \rt L(\R\times \H\times \H, \R\times\R\times \H)$ by
\begin{align*}
F(t,x,h) = \begin{pmatrix}
1& 0& 0\\
0&\sigma(t,x,h)&0\\
0&0&1
\end{pmatrix}
\end{align*}
In other words, the operator $F(t,x,h)$ is defined as
\begin{align}\label{opF}
F(t,x,h) y = (y_1, \<\sigma(t,x,h),y_2\>_\H, y_3)^T \in  \R\times\R\times \H, \quad y = (y_1,y_2,y_3)^T \in  \R\times \H\times \H.
\end{align}
Note that \eqref{NYsde} implies
\begin{align}
\label{KYsde2}
\tilde{X}_n(t) =\tilde{X}_n(0) + F(\tilde{X}_n) \cdot \tilde{U}_n(t) =\tilde{X}_n(0) + F(\tilde{X}_n) \cdot \tilde{Y}_n(t)+F(\tilde{X}_n) \cdot \tilde{Z}_n(t),
\end{align}
where $\tilde{Y}_n(t) = (t, Y_n(t), Y_n(t))$ and $\tilde{Z}_n(t) = (0, Z_n(t), Z_n(t)).$ Now the previous discussion tells that the sequences of $\R\times \H\times \H$-valued processes $\{\tilde{Y_n}\}$ and $HS(\R\times \H\times \H,\R\times \H\times \H)$-valued processes $\{\int \tilde{Z}_n \ot d\tilde{Z}_n \}$ are UT, and
\begin{align*}
\tilde{Y}_n \RT \tilde{U}, \int \tilde{Z}_n \ot d\tilde{Z}_n \RT -\f{[\tilde{U},\tilde{U}]^\ot_t}{2} \equiv -\f{1}{2}\begin{pmatrix}
0& 0& 0\\
0&[U,U]^\ot&[U,U]^\ot\\
0&[U,U]^\ot&[U,U]^\ot
\end{pmatrix} \in HS(\R\times \H\times \H,\R\times \H\times \H)
\end{align*}
and 
\begin{align*}
[\tilde{Y}_n,\tilde{Z}_n]^\ot  = -[\tilde{Y}_n,\tilde{Y}_n]^\ot= -[\tilde{Z}_n,\tilde{Z}_n]^\ot \RT -\begin{pmatrix}
0& 0& 0\\
0&[U,U]^\ot&[U,U]^\ot\\
0&[U,U]^\ot&[U,U]^\ot
\end{pmatrix}.
\end{align*}
Here, the elements of $ HS(\R\times \H\times \H,\R\times \H\times \H)$ are represented by matrix like structures of the form:
\begin{align*}
\Xi = \begin{pmatrix}
\beta&h_{12}&h_{13}\\
h_{21}&\xi_{22}&\xi_{23}\\
h_{31}&\xi_{32}&\xi_{33}
\end{pmatrix}, \quad \beta \in \R, \ h_{ij} \in \H,\  \xi_{ij} \in \H\oth\H.
\end{align*}
In other words, the Hilbert-Schmidt operator $\Xi \in  HS(\R\times \H\times \H,\R\times \H\times \H)$  is defined as
$$\Xi(y) = (\beta y_1+ \<h_{12},y_2\>_\H+\<h_{13},y_3\>_\H, y_1h_{21}+ \xi_{22}(y_2)+\xi_{23}(y_3),y_1h_{31}+ \xi_{32}(y_2)+\xi_{33}(y_3) )$$
Observe that the derivative operator, $DF(t,x,h) \in L(\R\times \R\times \H, L(\R\times \H\times \H, \R\times\R\times \H))$ is given by
\begin{align}\label{opDF}
DF(t,x,h)(b) = \begin{pmatrix}
0&0&0\\
0&b_1\partial_1\s+b_2\partial_2 \s+ \partial_3\s b_3&0\\
0&0&0
\end{pmatrix}, \quad b \in \R\times \R\times \H. 
\end{align}

\np
Now an application of Theorem \ref{sde_approx_inf} to \eqref{KYsde2} gives the limiting stochastic differential equation as
\begin{align}\label{limsde}
\tilde{X}(t) = \tilde{X}(0) + \int_0^t F(\tilde{X}(s-))\ d\tilde{U}(s) + \f{1}{2}\int_0^t \tilde{DF(\tilde{X}(s-)) F(\tilde{X}(s-))}\ d[\tilde{U},\tilde{U}]^\ot_t,
\end{align}
where  the mapping \ \ $\tilde{}$\ \ was defined in Lemma \ref{tildemap}.
Observe that in the present example (see Section \ref{prf} for a proof),
\begin{align}\label{tildefunct}
\tilde{DF(\tilde{x})F(\tilde{x})}(\Xi) = (0, \<\partial_1 \s, h_{21}\>_\H+\<\partial_2\s\ot \s,\xi_{22}\>_{\H\oth\H}+\<\partial_3\s,\xi_{23}\>_{\H\oth\H}, 0)^T.
\end{align}
Therefore, considering the middle component of \eqref{limsde}, it follows that $X_n \RT X$ where $X$ satisfies
$$X(t) = X(0) + \int_0^t \s dU(s)+ \f{1}{2}\int_0^t( \partial_3 \s+ \partial_2 \s\ot \s )d[U,U]^\ot_s$$

}
\end{example}

\begin{remark}
Example \ref{NYex} is a generalization of the results obtained by Nakao and Yamato \cite{NY78} (see Theorem \ref{WZ03}) and Konecny \cite{Konecny83} to stochastic differential equations driven by infinite-dimensional semimartingales. One important example of $U$ in Example \ref{NYex} is an $\H$-valued Brownian motion $W$ with covariance operator $Q$, where $Q$ is nuclear (see \cite{DPZ92}). In other words, for $h_1,h_2 \in \H$,
$[W(h_1, \cdot), W(h_2, \cdot)]_t = \<Qh_1,h_2\>_\H t$. The tensor quadratic variation of the process $W$ is given by $[W,W]^\ot_t = tQ$.
\end{remark}

\begin{remark}
Using the same technique, Example \ref{NYex} can easily be extended to the case where the solutions $X_n$ are also infinite-dimensional. Also, the approximation of the semimartingale $U$ by linear interpolation is just chosen for illustrative purpose. It can be easily extended to more general approximation techniques.
\end{remark}

\def\hsop{S}

\begin{example}
\label{Gauss_conv}
{\rm
 As a second example, we consider a space-time Gaussian white noise and its mollified version as its approximation. More precisely,  let $W$ be an $\{\SC{F}_t\}$-adapted space-time Gaussian white noise and $B_r(x)\subset \R^d$ denote the ball of radius $r$, centered at $x$. 
Let $\rho: \R^d \rt [0,\infty)$ and $\eta:\R \rt [0,\infty)$ be  smooth functions with $supp(\rho) \subset B_1(0)$, $supp(\eta) \subset (-1,0)$, and $\int_{\R^d} \rho(x) \ dx= 1, \int_{-\infty}^\infty \eta(s) \ ds =1$.\\
Define $\rho_n(x) = n^d\eta(nx)$, and $\eta_n(s) = n\eta(nx)$. Notice that 
 $\rho_n$ is supported on $B_{1/n}(0) \subset \R^d$, and $\eta_n$ is supported on $[-1/n, 0]$, and $\int_{B_{1/n}(0)}\rho_n(x) \ dx = 1$,  $\int_{[-1/n, 0]} \eta_n(s) = 1$.\\
 Define 
\begin{align}
 \label{Wapprox0}
\dot{W_n}(x,t) = \int_{\R^d\times [0,\infty)}\rho_n(x-y)\eta_n(s-t) W(dy\times ds).
\end{align}
For $h\in L^2(\R^d)$, let 
\begin{align}
 \label{Wapprox1}
{W_n}(h,t) = \int_{\R^d\times [0,\infty)} h(x) \dot{W_n}(x,s) dx \ ds,
\end{align}
Notice that $\{W_n\}$ is a sequence of $\{\SC{F}_t\}$-adapted $\H^\#$-semimartingales, for $\H = L^2(\R^d)$, and  $W_n \stackrel{P}\rt W$ in the sense that, for any finite $h_1,\hdots,h_m \in L^2(\R^d)$
$$(W_n(h_1,\cdot),\hdots, W_n(h_m,\cdot)) \stackrel{P}\rt (W(h_1,\cdot),\hdots, W_n(h,\cdot)).$$
Consider the following SDE
$$X_n(t) = X_n(0) + \int_{\R^d\times [0,t)}g(X_n(s), x) \dot{W_n}(x,s) dx \ ds.$$
Assume that
\begin{itemize}
 \item $|g(\cdot,x)| \leq \kappa(x)$ for some $\kappa \in L^1$ so that the integration in the right side is defined;

\item $g(y,x) = \hsop f(y,\cdot)(x)$, where $\hsop $ is a Hilbert-Schmidt operator on $L^2(\R^d)$. For example, if $\int_{\R^d\times \R^d}\gamma^2(x,u) dx \ du < \infty$, then $\hsop$ could be defined as
\begin{align}
 \label{Pex}
\hsop h(x) = \int_{\R^d}h(u)\gamma(x,u) \ du;
\end{align}
(In other words, if $S$ is defined by \eqref{Pex}, then $g(y,x) = \int_{\R^d}f(y,u)\gamma(x,u) \ du.$)

\item $\sup_{u}\int_{R^d}|f(u,x)|^2 \ dx < \infty, \sup_{u}\int_{R^d}|\partial_1f(u,x)|^2 \ dx   <\infty$ and $\sup_{u}\int_{R^d}|\partial_1^2f(u,x)|^2 \ dx   <\infty$, where $\partial_1f$ and $\partial_1^2f$ denote the first and second order partial derivative of $f$ with respect to the first co-ordinate.
\end{itemize}
The above assumptions imply that the mapping
$$u\in \R^d \rt f(u,\cdot)\in L^2(\R^d)$$ 
is  bounded in $L^2(\R^d)$ with bounded first and second-order  (Frechet) derivative.  \\
\np
Thus  $X_n$ satisfies
\begin{align}
\label{whitesde_approx}
X_n(t) = X_n(0) + \hsop f(X_n(\cdot), \cdot) \cdot W_n(t),
\end{align}
that is 
$$X_n(t) = X_n(0) + \int_{\R^d\times [0,t)}\hsop f(X_n(s), \cdot)(x) \dot{W_n}(x,s) dx \ ds.$$
Observe that $\{W_n\}$ is not a UT sequence, as
$$\int_0^tW_n(h,s)dW_n(h,s) = \f{1}{2}W_n(h,t)^2 \nRightarrow \int_0^t W(h,s) dW(h,s) = \f{1}{2}(W(h,t)^2 - \|h\|^2t).$$
We apply Theorem \ref{sde_approx} to find the limit of $\{X_n\}$. \\
First, notice that the SDE (\ref{whitesde_approx}) could be written as
\begin{align}
\label{whitesde_approx2}
X_n(t) =X_n(0) + f(X_n(\cdot), \cdot) \cdot \SC{W}_n(t),
\end{align}
where $\SC{W}_n$ is defined as
\begin{align}
 \label{Wapprox2}
{\SC{W}_n}(h,t) = \int_{\R^d\times [0,t)} (\hsop h)(x) \dot{W_n}(x,s) dx \ ds,
\end{align}
It is easy to check that 
$$E[\sup_{s \leq t}|\SC{W}_n(h,s) - W(\hsop h,s)|^2] \rt 0.$$
Observe that
\begin{align}
\non
{\SC{W}_n}(h,t)&  = \int_{\R^d\times [0,t)} (\hsop h)(x)(\int_{\R^d\times [0,\infty)}\rho_n(x-y)\eta_n(r-s)W(dy\times dr) ) dx \ ds\\
\non
& =\int_{\R^d\times [0,t)} (\int_{\R^d\times [0,t)} (\hsop h)(x) \rho_n(x-y)\eta_n(r-s) dx \ ds) W(dy\times dr) \\
& = \int_{\R^d\times [0,t)}(S_n \hsop h)(y) (\int_0^t ( \eta_n(r-s)  \ ds) W(dy\times dr) 
 \label{hwn2}
\end{align}
where the operator $S_n$ is defined as 
$$S_nh(x) = \int_{\R^d}h(y)\rho_n(x-y) \  dy.$$
Note that $\|S_n\|_{op} \leq 1.$
Write 
$$\SC{W}_n(h,t) = Y_n(h,t) + Z_n(h,t).$$
where $Y_n(h,t) \equiv W(S_n \hsop h,t).$
Define 
$$\tilde{\SC{W}}_n(t) = \sum_j \SC{W}_n(e_j,t)e_j$$
Notice that the infinite sum above converges, as from (\ref{hwn2})
\begin{align*}
\sup_{s\leq t}E(\|\sum_{j=K}^M\SC{W}_n(e_j,t)e_j\|_2^2 &= \sup_{s\leq t}\sum_{j=K}^M E(\SC{W}_n(e_j,s)^2) \\
& \leq \sum_{j=K}^M \|S_n \hsop e_j\|^2_2\ t \\
& \leq \sum_{j=K}^M \| \hsop e_j\|^2_2\ t, \ \ \ \mbox{as } \|S_n\|_{op} \leq 1 \\
& \rt 0, \mbox{ as } K,M \rt \infty , \  \mbox{ since }  \hsop \mbox{ is Hilbert-Schmidt.} 
\end{align*}

\np
It follows that $\tilde{\SC{W}}_n\in D_{L^2(\R^d)}[0,\infty)$. Similarly, 
$$\tilde{Y}_n \equiv \sum_j Y_n(e_j,\cdot) e_j \in D_{L^2(\R^d)}[0,\infty).$$
Thus, $\tilde{\SC{W}}_n$ and $\tilde{Y}_n$ are versions of $\SC{W}_n$ and $Y_n$ taking values in $L^2(\R^d)$ and it is easily checked that
$H\cdot \SC{W}_n = H\cdot \tilde{\SC{W}}_n$ and $H\cdot Y_n$ = $H\cdot \tilde{Y}_n.$
With a slight abuse of notation, we will continue to use $Y_n$ and $\SC{W}_n$ instead of $\tilde{Y}_n$ and $\tilde{\SC{W}}_n$, and consider them as $L^2(\R^d)$-valued semimartingales.\\

Put $K^n \equiv [Y_n,Z_n]^\ot = -[Z_n,Z_n]^\ot = - [Y_n, Y_n]^\ot$, and notice that
\begin{align*}
 [Y_n, Y_n]^\ot_t &= \sum_{j,k}[Y_n(e_j,\cdot), Y_n(e_j,\cdot)]_te_j\ot e_k\\
& =t\sum_{j,k} \<S_n\hsop e_j,S_n \hsop e_k\>e_j\ot e_k = t(S_n\hsop)^*(S_n\hsop).
\end{align*}
Here $\<\cdot,\cdot\>$ denotes the inner product in $L^2(\R^d)$. 
Recall that  $K_n$ is an $\H\oth\H$-valued (hence a standard ($\H\oth \H)^\#$ ) semimartingale.
We verify $K_n(t) \equiv [Y_n,Z_n]^\ot_t = -t(S_n\hsop)^*(S_n\hsop)$ converges to $-t\hsop^*\hsop$ in the sense of convergence of $(\H\oth \H)^\#$-semimartingale, that is we need to verify
for $u = \sum_{i=1}^{I}x_i\ot y_i$, $\<u, tK_n\> \rt \<u, tI\>$. This follows because
\begin{align*}
 \<u, tK_n\>& = \sum_{i}\<x_i, tK_ny_i\> =
 -t \sum_{i}\<x_i, (S_n\hsop)^*(S_n\hsop)y_i\> \\
&=- t \sum_{i}\<S_n\hsop x_i, S_n\hsop y_i\>  \Rt -t \sum_{i}\<\hsop x_i, \hsop y_i\>.
\end{align*}
The last equality is because $S_n\hsop  \rt \hsop$ in the strong operator topology.\\
Similarly, $H_n(t) \equiv \int_0^t Z_n(s)\ot dZ_n(s) \rt -\f{t}{2}\hsop^*\hsop.$

Next, observe that $\{Y_n\}$ is a uniformly tight sequence, and  $Y_n  \stackrel{P}\rt \SC{W}$ in the sense that, for any finite $h_1,\hdots,h_m \in L^2(\R^d)$
$$(Y_n(h_1,\cdot),\hdots, Y_n(h_m,\cdot)) \stackrel{P}\rt (\SC{W}(h_1,\cdot),\hdots, \SC{W}(h,\cdot)),$$
where $\SC{W}(h,t)  =W(\hsop h,t)$, that is, $\SC{W}$  is a space-time Gaussian white noise with 
$$[\SC{W}(h,\cdot),\SC{W}(g,\cdot)]_t = t\<\hsop h,\hsop g\>.$$
To apply Theorem \ref{sde_approx}, we only need to prove that the sequence $\{H_n \equiv \int_0^t Z_n(s-)\ot dZ_n(s) \}$ is uniformly tight.
For this purpose, we first compute $E\|Z_n(t)\|^2_2$.
Notice that 
$\|Z_n(t)\|^2_2  = \sum_{j}\|Y_n(e_j,t)-\SC{W}_n(e_j,t)\|^2$.
For any $h \in L^2(\R^d)$, we have using (\ref{hwn2})

\begin{align*}
 \SC{W}_n(h,t) - Y_n(h,t) &=  \int_{\R^d\times [0,t)}(S_n\hsop h)(y) (\int_0^t ( \eta_n(r-s)  \ ds) W(dy\times dr) \\
&\hspace{.4cm} -  \int_{\R^d\times [0,t)}(S_n\hsop h)(y) W(dy\times dr) \\
& = \int_{\R^d\times [0,t)}(S_n\hsop h)(y) (\int_0^t  \eta_n(r-s)  \ ds - 1) W(dy\times dr)  \\
&  =  \int_{\R^d\times [0,t-1/n)}(S_n\hsop h)(y) (\int_0^t  \eta_n(r-s)  \ ds - 1) W(dy\times dr)  \\
& \hspace{.4cm}+  \int_{\R^d\times [t-1/n,t)}(S_n\hsop h)(y) (\int_0^t  \eta_n(r-s)  \ ds - 1) W(dy\times dr)\\
& = \int_{\R^d\times [t-1/n,t)}(S_n\hsop h)(y) (-\int_t^{r+1/n} \eta_n(r-s)  \ ds ) W(dy\times dr).
\end{align*}
The last equality is because
$ \int_0^t  \eta_n(r-s)\ ds  =1, \mbox{ if }\ t\geq r+1/n$. Thus,
\begin{align*}
 E(\SC{W}_n(h,t) - Y_n(h,t) )^2 =\int_{\R^d\times [t-1/n,t)}|S_n\hsop h(y)|^2 dy \ dr \leq \f{1}{n}\|S_n\hsop h\|_2^2
\end{align*}
It follows that
\begin{align}
\label{Znorm}
 E\|Z_n(t)\|^2_2 \leq \f{1}{n}\sum_j \|S_n\hsop e_j\|_2^2 =\f{1}{n}\|S_n\hsop \|_{HS} \leq \f{1}{n}\|\hsop \|_{HS}.
\end{align}
Take $G$ to be an  $\{\SC{F}^n_t\}$-adapted $(\H\oth\H)$-valued cadlag process, and  $\|G(s)\|_{HS} \leq 1.$
Notice that by Theorem \ref{ch1} 
\begin{align*}
 G_n\cdot H_n(t)& = \int_0^t G(s-)(Z_n(s-)) \ dZ_n(s)  \\
& = \int_0^t G(s-)(Z_n(s-)) \ dY_n(s) - \int_0^t G(s-)(Z_n(s-)) \ d\SC{W}_n(s)\\
& = A + B.
\end{align*}
We have,
\begin{align*}
 E(B^2)& = \int_0^t\|S_n\hsop [G(s) (Z_n(s))]\|^2_2 \ ds \\
& \leq \int_0^t \|S_n\hsop \|_{op}\|G(s)\|_{op}\|Z_n(s)\|_2^2 \ ds \leq \int_0^t \|S_n\hsop \|_{op}\|G(s)\|_{HS}\|Z_n(s)\|_2^2 \ ds\\
&\leq \|\hsop\|_{op} \int_0^t\|Z_n(s)\|^2\ ds \leq \f{1}{n}\|\hsop\|_{HS}\|\hsop\|_{op} t
\end{align*}
and
\begin{align*}
A &= \int_0^t G(s-)(Z_n(s-))d\SC{W}_n(s)\\
& = \int_0^t \hsop G(s-)(Z_n(s-))(x)\int_{\R^d\times [0,\infty)}\rho_n(x-y)\eta_n(r-s)W(dy\times dr) dx ds\\
& = \int_0^t \int_{\R^d\times [0,\infty)}S_n\hsop G(s-)(Z_n(s-))(y)\eta_n(r-s)W(dy\times dr) ds.
\end{align*}
Thus,
\begin{align*}
 E(A^2) &\leq  \int_0^t E[\int_{\R^d\times [0,\infty)}S_n\hsop G(s-)(Z_n(s-))(y)\eta_n(r-s)W(dy\times dr)]^2 ds\\
& = \int_0^t( \int_{\R^d\times [0,\infty)}E|S_n\hsop G(s-)(Z_n(s-))(y)|^2|\eta_n(r-s)|^2 \ dy\ dr) \ ds\\
& \leq Cn\int_0^t( \int_{\R^d}E|S_n\hsop G(s-)(Z_n(s-))(y)|^2 dy) \ ds, \ \ \ C =\int_{-\infty}^{\infty}\eta^2(r) \ dr\\
& = Cn\int_0^tE\|S_n\hsop G(s-)Z_n(s-)\|_2^2  \ ds\\
& \leq Cn\|\hsop\|_{op} \int_0^tE \|Z_n(s-)\|^2_2\  ds\\
& \leq Ct \|\hsop\|_{op}\|\hsop\|_{HS}, \ \  \mbox{ using} \ (\ref{Znorm}).
\end{align*}
It follows that $\{H_n\}$ is uniformly tight.

\np
Now if $X_n(0) \stackrel{P}\Rt X(0)$, then applying Theorem \ref{sde_approx}, we conclude $X_n  \stackrel{P}\rt X$, where $X$ satisfies
\begin{align*}
X(t)& = X(0) + \int_{\R^d\times [0,t)} \hsop f(X(s),\cdot)(x)W(dx\times ds) \\
&\hspace{.4cm}+\int_0^t Df(X_{s-})\ot f(X_{s-})\ d(-\f{s}{2}\hsop\hsop^* +s \hsop^*\hsop).
\end{align*}
If $\hsop = \hsop^*$, which will be the case if $\hsop$ is defined by (\ref{Pex}), then the above SDE could be written as
\begin{align*}
X(t)& = X(0) + \int_{\R^d\times [0,t)} \hsop f(X(s),\cdot)(x)W(dx\times ds) \\
& \hspace{.4cm} +\f{1}{2} \int_{\R^d \times [0,t) }  \hsop f(X(s),\cdot)(x) \partial_1\hsop f(X(s),\cdot)(x) \ dx  \ ds.
\end{align*}

}
\end{example}

\begin{example}
{\rm
Let $\{\xi_n\}$ be a $\phi$-irreducible Markov chain taking values in a separable metric space $U$. Let $P$ denote the transition kernel of $\{\xi_n\}$. Assume that the chain is ergodic with unique stationary distribution $\pi$. Let $\H = L^2(U,\pi)$.
Let $\{\tilde{W}_n\}, \{\tilde{Y}_n\} $ and $\{\tilde{Z}_n\}$ be $\H^\#$-semimartingales  defined by
\begin{align*}
\tilde{W}_n(h,t)& \equiv \f{1}{\sqrt{n}}\sum_{k=1}^{[nt]}\le(Ph(\xi_k) - h(\xi_k)\ri)\\
& = \f{1}{\sqrt{n}}\sum_{k=1}^{[nt]} \le(Ph(\xi_{k-1}) - h(\xi_k)\ri)+ \f{1}{\sqrt{n}}\le(Ph(\xi_{[nt]}) - h(\xi_0)\ri)\\
& \equiv \tilde{Y}_n(h,t) + \tilde{Z}_n(h,t).
\end{align*}
Let $\hsop$ be a Hilbert-Schmidt operator from $\H$ to $\H$. Define 
$Y_n(h,t) \equiv \tilde{Y}_n(\hsop h,t)$ and $Z_n(h,t) \equiv \tilde{Z}_n(\hsop h,t)$.
 Let $\{e_k\}$ be an orthonormal basis of $\H$. With a slight abuse of notation, define
 \begin{align*}
Y_n(t) = \sum_k Y_n(e_k,t)e_k, \ \ Z_n(t) = \sum_k Z_n(e_k,t)e_k
\end{align*}
Then $Y_n$ and $Z_n$ are $\H$-valued processes. To see this, first note that
\begin{align}
\label{eq_markov1}
\sup_{t\leq T}\EE[\|\sum_{j=K}^M Y_n(e_j,t)e_j\|_2^2]& = \sup_{t\leq T}\sum_{j=K}^M\EE[\| Y_n(e_j,t)\|_2^2].
\end{align}
Now observe that for any $h\in \H$, 
\begin{align*}
\EE[\| Y_n(h,t)\|_2^2]& = \f{1}{n}\sum_{j=1}^{[nt]}\EE[P\hsop h(\xi_{k-1}) -\hsop h(\xi_k)]^2 \leq 2\f{[nt]}{n}(\|P\hsop h\|_2^2 + \|\hsop h\|_2^2) \leq 4\f{[nt]}{n}\|\hsop h\|_2^2.
\end{align*}
It follows from \eqref{eq_markov1} that 
\begin{align*}
%\label{eq_markov1}
\sup_{t\leq T}\EE[\|\sum_{j=K}^M Y_n(e_j,t)e_j\|_2^2]& \leq 4\f{[nT]}{n} \sum_{j=K}^M \|Se_j\|_2^2\\
& \rt 0, \mbox{ as } K,M\rt \infty, \mbox{ since } \hsop \mbox{ is Hilbert-Schmidt}.
\end{align*}
Similarly, $Z_n$ is an $\H$-valued process. \\
Consider a sequence of SDEs of the form \eqref{xn2} driven by $\{Y_n\}$ and $\{Z_n\}$. We show that $\{Y_n\}$ and $\{Z_n\}$ satisfy the assumptions of Theorem \ref{sde_approx_inf}.\\

\np
For each $n$, $Y_n$ is a martingale, and by the martingale central limit theorem it follows that for any collection of $h_1,\hdots,h_m \in \H$
$$(Y_n(h_1,\cdot),\hdots,Y_n(h_m\cdot)) \RT W,$$
where $W$ is an $m$-dimensional Gaussian process with covariance matrix, $tC$, and $C$ is given by
\begin{align*}
C_{i,j} & =\lim_{n\rt \infty}\f{1}{n}\sum_{k=1}^n (P\hsop h_i(\xi_{k-1})  -\hsop h_i(\xi_{k}))(P\hsop h_j(\xi_{k-1})  -\hsop h_j(\xi_{k}))\\
& = \int \pi(dx) \int P(x,dy)(P\hsop h_i(x) - \hsop h_i(y))(P\hsop h_j(x) - \hsop h_j(y)).
\end{align*}
Next, we prove that $\{Y_n\}$ is UT. By Theorem \ref{FUT1},  it is enough to show  that $\sup_n \EE[Y_n,Y_n]_t <\infty.$
To see this,   observe that
\begin{align*}
[Y_n,Y_n]_t = \mbox{trace}([Y_n,Y_n]^\ot_t) = \sum_k [Y_n(e_k,\cdot),Y_n(e_k,\cdot)]_t,
\end{align*}
where  $\{e_k\}$ is an orthonormal basis of $\H$.\\
Note that $[Y_n(h,\cdot), Y_n(g,\cdot)]_t =\f{1}{n}\sum_{k=1}^{[nt]} (P\hsop h(\xi_{k-1})  -\hsop h(\xi_{k}))(P\hsop g(\xi_{k-1})  -\hsop g(\xi_{k}))$ and therefore,
\begin{align*}
\EE[Y_n,Y_n]_t& = \f{[nt]}{n}\sum_k \int \pi(dx) \int P(x,dy)(P\hsop e_k(x) - \hsop e_k(y))^2\\
& \leq 4\f{[nt]}{n} \sum_k \|\hsop e_k\|^2_2.
\end{align*}
Since $\hsop$ is Hilbert-Schmidt, it follows that  $\sup_n \EE[Y_n,Y_n]_t <\infty.$

Also, it is immediate that $Z_n \RT 0$, in the sense that for any collection of $h_1,\hdots,h_m \in \H$
$(Z_n(h_1,\cdot),\hdots,Z_n(h_m\cdot)) \RT 0.$
Next, note that since $Z_n(h,t) = \f{1}{\sqrt{n}} \sum_{k=1}^{[nt]} (P\hsop h(\xi_k) - P\hsop h(\xi_{k-1})$
\begin{align*}
[Z_n(g,\cdot), Z_n(h,\cdot)]_t &= \f{1}{n} \sum_{k=1}^{[nt]} (P\hsop g(\xi_k) - P\hsop g(\xi_{k-1})(P\hsop h(\xi_k) - P\hsop h(\xi_{k-1})\\
& \RT t\int \pi(dx) \int P(x,dy) (P\hsop g(y) - P\hsop g(x))(P\hsop h(y) - P\hsop h(x)).
\end{align*}
It follows that for any $g_1,h_1, \hdots,g_m,h_m \in \H$,
\begin{align*}
([Z_n,Z_n]^\ot_t(g_1\ot h_1),\hdots, [Z_n,Z_n]^\ot_t(g_m\ot h_m))& =([Z_n(g_1,\cdot), Z_n(h_1,\cdot)]_t,\hdots,[Z_n(g_m,\cdot), Z_n(h_m,\cdot)]_t)\\
& \RT t\rho,
\end{align*}
where $\rho = (\rho_i)_{i=1}^m$ and $\rho_i =\int \pi(dx) \int P(x,dy) (P\hsop g_i(y) - P\hsop g_i(x))(P\hsop h_i(y) - P\hsop h_i(x)).$\\
\np
Also,
\begin{align*}
[Z_n(g,\cdot), Y_n(h,\cdot)]_t &= \f{1}{n} \sum_{k=1}^{[nt]} (P\hsop g(\xi_k) - P\hsop g(\xi_{k-1})(P\hsop h(\xi_{k-1}) - \hsop h(\xi_{k})\\
& \RT t\int \pi(dx) \int P(x,dy) (P\hsop g(y) - P\hsop g(x))(P\hsop h(x) - \hsop h(y)).
\end{align*}
Therefore,
\begin{align*}
([Z_n,Y_n]^\ot_t(g_1\ot h_1),\hdots, [Z_n,Y_n]^\ot_t(g_m\ot h_m)) & \RT t\rho',
\end{align*}
where $\rho' = (\rho'_i)_{i=1}^m$ and $\rho'_i =\int \pi(dx) \int P(x,dy) (P\hsop g_i(y) - P\hsop g_i(x))(P\hsop h_i(x) - \hsop h_i(y)).$\\
Similarly,
$$(\int_0^t Z_{n}(s-)\ot dZ_n(s)(g_1\ot h_1),\hdots, \int_0^t Z_{n}(s-)\ot dZ_n(s)^\ot(g_m\ot h_m))  \RT t\rho'',$$
where  $\rho'' = (\rho''_i)_{i=1}^m$ and $\rho''_i =\int \pi(dx) \int P(x,dy) P\hsop g_i(x) (P\hsop h_i(y) - \hsop h_i(x)).$
\np
Finally, we need to prove that $\{\int Z_{n}(s-)\ot dZ_n(s)\}$ is UT. By Theorem \ref{FUT1}, it is enough to show that for every $t>0$,  $\{T_t(\int Z_{n}(s-)\ot dZ_n(s))\} $ is tight. Call $H_n =\int Z_{n}(s-)\ot dZ_n(s)$. Recall that if $\{e_k\}$ is an orthonormal basis of $\H$, then  $\{e_k\ot e_l\}$ forms an orthonormal basis of $\H\oth\H$. Hence, 
\begin{align*}
T_t(\int Z_{n}(s-)\ot dZ_n(s)) &= \sup_{\{t_i\}}\sum_i \sqrt{\sum_k|H_n(e_k\ot e_l,t_i)-H_n(e_k\ot e_l,t_{i-1})|^2}\\
& = \f{1}{n}\sum_{j=1}^{[nt]}\sqrt{\sum_{k,l}|P\hsop e_k(\xi_{j-1})|^2|P\hsop e_l(\xi_j) - P\hsop e_l(\xi_{j-1})|^2}\\
&\leq \sqrt{\sum_{k}\f{1}{n}\sum_{j=1}^{[nt]} |P\hsop e_k(\xi_{j-1})|^2}\sqrt{\sum_{l}\f{1}{n}\sum_{j=1}^{[nt]} |P\hsop e_l(\xi_j) - P\hsop e_l(\xi_{j-1})|^2}\\
& \RT \sqrt{\sum_k \|P\hsop e_k\|^2}\sqrt{\sum_l\int \pi(dx)\int P(x,dy) |P\hsop e_l(x) -P\hsop e_l(y)|^2} <\infty.
\end{align*}
It follows that $\{T_t(\int Z_{n}(s-)\ot dZ_n(s))\} $ is tight.

}
\end{example}

\setcounter{section}{0}
\setcounter{theorem}{0}
\setcounter{equation}{0}
\renewcommand{\theequation}{\thesection.\arabic{equation}}

\appendix
\section*{Appendix}
\renewcommand{\thesection}{A} 

\subsection{Tensor product}
\label{tenprod}
All the results in this section are from Ryan \cite{Rr02}.
Let $\X$, $\Y$ be two Banach spaces. Let $B(\X\times\Y, \Z)$ be the space of all bounded bilinear forms from $\X\times \Y \rt \Z$, that is set of all bilinear forms $A$ such that
$$\|A(x,y)\|_{\Z} \leq \gamma \|x\|_{\X}\|y\|_{\Y}, \  \mbox{for some } \ \gamma>0.$$
The smallest such constant $\gamma$ is the norm of $A$, and will be denoted by $\|A\|$. 
If $\Z =\R$, then we will denote $B(\X\times\Y, \Z)$ by $B(\X\times\Y)$.

For a vector space $V$, let $V^\#$ denote the algebracic dual of $V$.
The tensor product $\X\ot\Y$ will be constructed as $B(\X\times\Y)^\#$, by defining the action of $x\ot y$ on $B(\X\times\Y)^\#$ as
$$x\ot y(A) = A(x,y),  \ \  x \in \X, y\in \Y.$$
Thus, a typical tensor $u \in \X\ot \Y$, has the form 
\begin{align}
 \label{typtensor}
u = \sum_{i=1}^I x_i\ot y_i.
\end{align}
Notice that by definition $u = 0$,  if 
$$\sum_{i=1}^{I}A(x_i,y_i) = 0 , \  \mbox{ for all } \ A \in B(\X\times\Y).$$
The following theorem gives an easy criterion to check if $u=0$.
\begin{theorem}
Let $u$ be a tensor of the form (\ref{typtensor}). Then $u=0$ if and only if 
$$\sum_{i=1}^I \phi(x_i)\psi(y_i)=0, \ \ \mbox{ for all } \phi \in \X^*, \psi \in \Y^*.$$
\end{theorem}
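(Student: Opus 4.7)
The easy direction is essentially tautological: given any $\phi\in\X^*$ and $\psi\in\Y^*$, the map $A_{\phi,\psi}(x,y):=\phi(x)\psi(y)$ is a bounded bilinear form on $\X\times\Y$ (with $\|A_{\phi,\psi}\|\leq\|\phi\|\|\psi\|$), so the very definition of $u=0$ as an element of $B(\X\times\Y)^{\#}$ forces $\sum_i\phi(x_i)\psi(y_i)=\sum_i A_{\phi,\psi}(x_i,y_i)=0$.

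For the converse, assume $\sum_{i=1}^I\phi(x_i)\psi(y_i)=0$ for every $\phi\in\X^*$ and every $\psi\in\Y^*$. The first move is to fix $\phi\in\X^*$ and rewrite the identity as $\psi\bigl(\sum_i\phi(x_i)y_i\bigr)=0$ for all $\psi\in\Y^*$. Since $\Y^*$ separates the points of $\Y$ (a standard consequence of Hahn--Banach), I conclude
\[
\sum_{i=1}^I\phi(x_i)\,y_i=0\quad\text{in }\Y,\qquad\text{for every }\phi\in\X^*.
\]

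The second move is a reduction to the linearly independent case. Let $V\subset\X$ be the finite-dimensional span of $\{x_1,\dots,x_I\}$ and pick a basis $v_1,\dots,v_J$ of $V$; write $x_i=\sum_j c_{ij}v_j$. Setting $z_k:=\sum_i c_{ik}y_i$, bilinearity yields, for \emph{any} bounded bilinear form $A\in B(\X\times\Y)$,
\[
\sum_{i=1}^I A(x_i,y_i)=\sum_{j=1}^J A(v_j,z_j).
\]
By Hahn--Banach applied to the finite-dimensional subspace $V$, I can find $\phi_1,\dots,\phi_J\in\X^*$ with $\phi_k(v_j)=\delta_{jk}$. Plugging $\phi_k$ into the displayed identity above gives $z_k=\sum_i\phi_k(x_i)y_i=0$ for every $k$. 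Hence $\sum_iA(x_i,y_i)=\sum_jA(v_j,0)=0$ for every $A\in B(\X\times\Y)$, which is exactly the statement $u=0$.

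The only genuine subtlety is keeping straight the two appeals to Hahn--Banach (first to pass from the functional identity to an equation in $\Y$, and then to produce the dual basis $\{\phi_k\}$), together with the bookkeeping of the reduction $x_i=\sum_j c_{ij}v_j$; once these are in place the argument is purely algebraic.
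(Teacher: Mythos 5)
Your proof is correct. The paper itself gives no proof of this statement---it is quoted verbatim from Ryan \cite{Rr02}, where the argument is essentially the one you give (reduce to a linearly independent spanning set of the $x_i$, use Hahn--Banach to build a dual system $\phi_k(v_j)=\delta_{jk}$, and conclude that every bounded bilinear form annihilates $u$)---so your write-up supplies exactly the standard argument the paper implicitly relies on.
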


So far we have introduced tensor product $\X\ot\Y$ as a vector space. Many choices of norm exist to complete the  space $\X\ot\Y$, e.g the projective norm, the nuclear norm etc. Here however, we focus on the case when $\X$ and $\Y$ are separable Hilbert spaces and the norm considered on $\X\ot\Y$ is Hilbert-Schmidt.

\subsection{Hilbert-Schmidt operator and tensor product}
\label{HSnorm}
Let $\X$ and $\Y$ be two separable Hilbert spaces. Let $\{e_j\}$ be a complete orthonormal system of $\X$. $S \in L(\X,\Y)$ is a Hilbert-Schmidt operator if 
$$\sum_{j}\|Se_j\|^2_{\K} < \infty.$$
The quantity in the left side does not depend on the orthonormal system $\{e_j\}$, and its square root is defined as the Hilbert-Schmidt norm $\|S\|_{HS}$. The space of all Hilbert-Schmidt operators is denoted by $HS(\X,\Y)$. $HS(\X,\Y)$ is a separable Hilbert space.\\

Let $h, h' \in \X$ and  $k,k' \in \Y$. Define an inner product $\<\cdot,\cdot\>_{HS}$ on $\X\ot \Y$ by
$$\<h\ot k, h'\ot k'\>_{HS} = \<h,h'\>_{\X} \ \<k,k,\>_{\Y}.$$
Let $\X\hat{\ot}_{HS}\Y$ denote the completion of the space with respect to the inner product $\<\cdot,\cdot\>_{HS}$. Then $\X\hat{\ot}_{HS}\Y$ is isometrically isomorphic to $HS(\Y,\X)$ and also $HS(\X,\Y)$.\\
If $\{e_j\}$ and $\{f_j\}$ are complete orthonormal systems of $\X$ and $\Y$, then $\{e_j\ot f_k\}_{j,k}$ forms a complete orthonormal system of $\X\hat{\ot}_{HS}\Y.$
If $T \in HS(\X,\Y)$, then $T$ can be represnted as
$$ T  = \sum_{j,k} \<Te_j, f_k\> e_j\ot f_k.$$

\subsection{Infinite-dimensional It\^o's lemma}
\begin{theorem}\label{infito} \citep[Theorem 27.2]{Mm82}
Let $\X$ and $\Y$ be two separable Hilbert spaces, $Z$ an adapted $\X$-valued semimartingale and $\phi:\X\rt \Y$ be a twice continuously differentiable  function with first and second-order derivatives denoted by $D\phi$ and $D^2\phi$ respectively. Assume that for each $x\in \X$, $D^2\phi(x)$ is an element of $L(\X\oth\X,\Y)$ and the mapping $x\rt D^2\phi(x)$ is  uniformly continuous on any bounded subset of $\X$. Then
\begin{align*}
\phi(Z_t) &= \phi(Z_0) + \int_0^t D\phi(Z(s-)) \ dZ(s) +\f{1}{2}\int_0^t D^2\phi(Z(s-)) \ d[Z,Z]^\ot(s) \\
& \hspace{.5cm}+ \sum_{s\leq t}\le(\phi(Z(s))- \phi(Z(s-)) - D\phi(Z(s-))\Delta Z(s) - \f{1}{2}D^2\phi(Z(s-))\Delta Z(s)\ot\Delta Z(s)\ri)\\
& =   \phi(Z_0) + \int_0^t D\phi(Z(s-)) \ dZ(s) +\f{1}{2}\int_0^t D^2\phi(Z(s-)) \ d[Z,Z]^{c,\ot}_s \\
& \hspace{.5cm}+ \sum_{s\leq t}\le(\phi(Z(s))- \phi(Z(s-)) - D\phi(Z(s-))\Delta Z(s) \ri)
\end{align*}
where $[Z,Z]^{c,\ot}_t = [Z,Z]^\ot_t - \sum_{s\leq t}\Delta Z(s)\ot\Delta Z(s).$
\end{theorem}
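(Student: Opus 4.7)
The plan is to mimic the classical proof of It\^o's formula, systematically replacing scalar squares of increments by their tensor products and interpreting $D^2\phi$ as an $L(\X\oth\X,\Y)$-valued map as hypothesized. By a localization argument using stopping times, I reduce to the situation where $Z$ is uniformly bounded, so that $\phi$, $D\phi$ and $D^2\phi$ are bounded on the range of $Z$ with $D^2\phi$ uniformly continuous there (with some modulus $\omega$), and $[Z,Z]_t$ is bounded. For a partition $\sigma = \{0 = t_0 < \cdots < t_n = t\}$ of $[0,t]$, write $\Delta_i Z = Z(t_{i+1}) - Z(t_i)$. Applying the Banach-space Taylor formula with integral remainder to each term of the telescoping identity $\phi(Z(t)) - \phi(Z(0)) = \sum_i [\phi(Z(t_{i+1})) - \phi(Z(t_i))]$ yields
\[
\phi(Z(t_{i+1})) - \phi(Z(t_i)) = D\phi(Z(t_i))\Delta_i Z + \tfrac{1}{2} D^2\phi(Z(t_i))(\Delta_i Z \otimes \Delta_i Z) + r_i^\sigma,
\]
with $\|r_i^\sigma\|_\Y \leq \omega(\|\Delta_i Z\|_\X)\,\|\Delta_i Z\|_\X^2$. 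The first-order sum $\sum_i D\phi(Z(t_i))\Delta_i Z$ converges in probability to $\int_0^t D\phi(Z(s-))\,dZ(s)$ as $\|\sigma\| \rt 0$ by Theorem \ref{SIdef2}, since $D\phi(Z(\cdot-))$ is an adapted cadlag $L(\X,\Y) = HS(\X,\Y)$-valued process.

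The core of the argument is the second-order sum $S_\sigma := \sum_i D^2\phi(Z(t_i))(\Delta_i Z \otimes \Delta_i Z)$. For any $\varepsilon > 0$, almost surely only finitely many jumps $s_1,\ldots,s_N$ of $Z$ on $[0,t]$ satisfy $\|\Delta Z(s_j)\|_\X > \varepsilon$; restrict attention to partitions that refine $\{s_j\}$. The increments straddling a large jump contribute, by left-continuity of $Z_{-}$ and continuity of $D^2\phi$, exactly $\sum_j D^2\phi(Z(s_j-))(\Delta Z(s_j)\otimes\Delta Z(s_j))$ in the limit. The remaining ``small-increment'' terms are handled by combining the uniform continuity of $D^2\phi$ on the range of $Z$ with the tensor quadratic variation identity $\sum_i (\Delta_i Z)\otimes(\Delta_i Z) \rt [Z,Z]^\otimes_t$ from Section \ref{tenSI}, yielding convergence to $\int_0^t D^2\phi(Z(s-))\,d[Z,Z]^\otimes(s)$ with the large-jump pieces subtracted. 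Sending $\varepsilon \rt 0$ rearranges these into $\int_0^t D^2\phi(Z(s-))\,d[Z,Z]^{c,\otimes}(s) + \sum_{s\leq t} D^2\phi(Z(s-))(\Delta Z(s)\otimes\Delta Z(s))$, which is the continuous-plus-jumps decomposition in the second form of the formula.

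The remainder sum $\sum_i r_i^\sigma$ is the usual story: over increments not straddling a large jump, its norm is bounded by $\sup_i \omega(\|\Delta_i Z\|_\X) \cdot \sum_i \|\Delta_i Z\|_\X^2$, the latter controlled by $\mathrm{trace}([Z,Z]^\otimes_t) = [Z,Z]_t$ via Remark \ref{tensorcov}, and this vanishes since $\omega(\|\Delta_i Z\|_\X) \rt 0$ uniformly on small increments. The large-jump increments produce exactly the non-telescoped residues $\phi(Z(s_j)) - \phi(Z(s_j-)) - D\phi(Z(s_j-))\Delta Z(s_j) - \tfrac{1}{2} D^2\phi(Z(s_j-))(\Delta Z(s_j) \otimes \Delta Z(s_j))$, which sum to the full jump series as $\varepsilon \rt 0$; this gives the first form of the formula, and the two forms are then visibly equivalent. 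The principal obstacle is the second-order step: rigorously matching the $L(\X\oth\X,\Y)$-valued process $D^2\phi(Z(\cdot-))$ against the nuclear-valued, nuclear-finite-variation process $[Z,Z]^\otimes$, so that the Riemann-type sums $S_\sigma$ converge to a Stieltjes-type integral uniformly in the partition; the nuclear-norm finite-variation property recorded in Remark \ref{tensorcov} is precisely what enables this pairing.
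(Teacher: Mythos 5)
The paper does not actually prove this statement; it is imported verbatim from M\'etivier \citep[Theorem 27.2]{Mm82}, so there is no internal proof to compare against. Your sketch follows the classical route (localization, telescoping Taylor expansion along a partition, separation of large jumps, control of the second-order sums via uniform continuity of $D^2\phi$ together with the nuclear-norm finite variation of $[Z,Z]^\ot$ from Remark \ref{tensorcov}), which is in substance how the cited source argues, and the logic is sound: you correctly identify that the hypothesis $D^2\phi(x)\in L(\X\oth\X,\Y)$ is exactly what permits pairing the integrand against the $\X\oth\X$-valued process $[Z,Z]^\ot$, and that $T_t([Z,Z]^\ot)\le [Z,Z]_t$ is what turns the weighted second-order Riemann sums into a genuine Stieltjes integral. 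One inaccuracy to fix: $L(\X,\Y)$ is not equal to $HS(\X,\Y)$ when $\X$ is infinite-dimensional, so $D\phi(Z(\cdot-))$ need not be Hilbert--Schmidt-valued; this is harmless, since Theorem \ref{SIdef2} only requires the integrand to be cadlag, adapted and $L(\X,\Y)$-valued, but the justification should invoke that theorem for $L(\X,\Y)$ directly rather than via the erroneous identification.
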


\subsection{Proof of \eqref{tildefunct}}\label{prf}
Let $\{\gamma_k\}$ be an orthonormal basis of the Hilbert space $\H$. Then a basis for $\R\times\H\times \H$ is given by
$\{e^1=(1,0,0)^T, e^2_i=(0,\g_i,0)^T, e^3_i=(0,0,\g_i)^T:i=1,2,\hdots\}.$ Consequently, a basis for $HS(\R\times\H\times \H,\R\times\H\times \H)$ is given by
$\{e^1\ot e^{k}_i, e^k_i\ot e^1, e^k_i\ot e^l_j:k,l=2,3, \  i,j=1,2,\hdots\}$. Now an expansion of $\Xi \in HS(\R\times\H\times \H,\R\times\H\times \H) $ gives
\begin{align}
\non
\Xi& = \beta e^1\ot e^1 + \sum_i \<h_{12},\g_i\>_\H e^1\ot e^2_i +  \sum_i \<h_{13},\g_i\>_\H e^1\ot e^3_i \\ 
\non
& \quad + \sum_i \<h_{21},\g_i\>_\H e^2_i\ot e^1+  \sum_{i,j} \<\xi_{22},\g_i\ot \g_j\>_{\H\oth\H}e^2_i\ot e^2_j+  \sum_{i,j} \<\xi_{23},\g_i\ot \g_j\>_{\H\oth\H}e^2_i\ot e^3_j \\
& \quad +  \sum_i \<h_{31},\g_i\>_\H e^3_i\ot e^1+  \sum_{i,j} \<\xi_{32},\g_i\ot \g_j\>_{\H\oth\H}e^3_i\ot e^2_j+  \sum_{i,j} \<\xi_{33},\g_i\ot \g_j\>_{\H\oth\H}e^3_i\ot e^3_j.
\label{expansion}
\end{align}
Observe that
\begin{align*}
F(\tilde{x})(e^1)=(1,0,0)^T, \ F(\tilde{x})(e^2_i)= (0, \<\s,\g_i\>_\H,0), \  \ F(\tilde{x})(e^3_i)= (0, 0, \g_i)
\end{align*}
By the definition of the mapping \ \ $\tilde{}$\ \  in Lemma \ref{tildemap}, and using \eqref{opF} and \eqref{opDF}
\begin{align*}
\tilde{DF(\tilde{x})F(\tilde{x})}(e^2_i\ot e^2_j)& = \le(\tilde{DF(\tilde{x})F(\tilde{x})}(e^2_j)\ri)(e^2_i)=
\begin{pmatrix}
0 & 0 & 0\\
0 & \<\s,\g_j\>_\H \partial_2 \s& 0\\
0 & 0 & 0
\end{pmatrix}e^2_i \\
& = (0, \<\s,\g_j\>_\H \<\partial_2 \s,\g_i\>_\H, 0) = (0, \<\partial_2 \s \ot \s,\g_i\ot\g_j\>_{\H\oth\H}, 0). 
\end{align*}
Similarly,
\begin{align*}
\tilde{DF(\tilde{x})F(\tilde{x})}(e^2_i\ot e^3_j)& = \le(\tilde{DF(\tilde{x})F(\tilde{x})}(e^3_j)\ri)(e^2_i)=
\begin{pmatrix}
0 & 0 & 0\\
0 & \partial_3\s \g_j & 0\\
0 & 0 & 0
\end{pmatrix}e^2_i \\
& = (0, \<\partial_3\s \g_j, \g_i\>_\H, 0)^T = (0, \<\partial_3\s, \g_i \ot \g_j\>_{\H\oth\H}, 0)^T. 
\end{align*}
and
\begin{align*}
\tilde{DF(\tilde{x})F(\tilde{x})}(e^2_i\ot e^1)& = \le(\tilde{DF(\tilde{x})F(\tilde{x})}(e^1)\ri)(e^2_i)= (0,\<\partial_1\s,\g_i\>_\H, 0)^T.
\end{align*}
It can easily be checked that other terms are $(0,0,0)^T$. \eqref{tildefunct} now follows from the expansion \eqref{expansion}.

\vspace{1cm}
\np
\textbf{Acknowledgement:}
It is a pleasure to thank  Professor Tom Kurtz for his numerous advice and comments throughout the preparation of the paper.

\bibliographystyle{plainnat}
\bibliography{ref}
\end{document}